\numberwithin{equation}{section}
\newtheorem{theorem}{Theorem}[section]
\newtheorem{lemma}[theorem]{Lemma}
\newtheorem{corollary}[theorem]{Corollary}
\newtheorem{remark}[theorem]{Remark}
\newtheorem{proposition}[theorem]{Proposition}
\begin{document}
\begin{sloppypar}

\title[Minkowski problem of anisotropic $p$-torsional rigidity]{Minkowski problem of anisotropic $p$-torsional rigidity}

\author{Chao Li}
\address{Chao Li:
School of Mathematics and Statistics, Ningxia University, Yinchuan, Ningxia, 750021, China.}
\address{Ningxia Basic Science Research Center of Mathematics, Ningxia University, Yinchuan 750021, China.}
	\email{lichao@nxu.edu.cn, lichao166298@163.com}

	\subjclass[2020]{35N25, 52A20, 53C21, 31A15}
	
\keywords{Finsler $p$-Laplacian, anisotropic $p$-torsional rigidity, Minkowski problem, log Minkowski problem}

\begin{abstract}
In this paper, we consider the Minkowski problem associated with the solution to the anisotropic $p$-Laplacian (or Finsler $p$-Laplacian) equation, namely, the Minkowski problem of anisotropic $p$-torsional rigidity. The sufficient and necessary conditions for the existence of a solution to the Minkowski problem of anisotropic $p$-torsional rigidity are presented. Meanwhile, the existence of a solution to the log-Minkowski problem of anisotropic $p$-torsional rigidity without symmetry assumptions is solved.
\end{abstract}

\maketitle

\vskip 20pt
\section{Introduction and main results}
In the last few years, notable progress has been made in the study of the Minkowski problem. This progress includes the theoretical establishment of the existence, uniqueness, and smoothness of the solutions to the Minkowski problem. The Minkowski problem, initially introduced by Minkowski, was later independently solved by Aleksandrov \cite{AAD1938} and Fenchel and Jessen \cite{FJ1938}. Their work demonstrated that a Borel measure $\mu$ defined on $\mathbb{S}^{n-1}$ is equivalent to the surface area measure of a convex body under the conditions that it is finite, its centroid is at the origin, and is not concentrated on any great subsphere of $\mathbb{S}^{n-1}$. This problem is of great significance in various fields such as differential geometry, convex geometry and partial differential equations.
 
In 1993, Lutwak \cite{LUTWA1993} introduced Firey's $p$-Minkowski sum into the concept volume and proposed a series of far-reaching concepts such as the $L_p$-mixed volume and $L_p$-surface area measure. At the same time, Lutwak \cite{LUTWA1993} also proved the existence and uniqueness of the solution to the $L_p$-Minkowski problem for $p>1$ and $p\neq n$ with even measure. Since then the research on Minkowski problem has attracted considerable attention and produced a number of influential results. In addition, Lutwak and Oliker \cite{LOLI1995} obtained the regularity of solution to the $L_p$-Minkowski problem. Soon afterwards, the $L_p$-Minkowski problem led to the Orlicz-Minkowski problem, Haberl, Lutwak, Yang and Zhang \cite{HLYZ2010} proved the existence of solutions for the even Orlicz-Minkowski problem and provided a new method to solve the classical Minkowski problem. Pioneering work of Huang, Lutwak, Yang and Zhang \cite{HLYZ2016} studied the dual Minkowski problem and gave the existence conditions for the solution of the dual Minkowski problem in the symmetric case. Both the logarithmic Minkowski problem and the Aleksandrov problem are special cases of the dual Minkowski problem. The latest research results on Minkowski problem can be found in the books \cite{BMT2024,BRF2024}, which covers the research contents and methods of Minkowski problem in recent years.

Owing to the profound influence of the Minkowski problem, Minkowski type problems related to the solution of the Laplace equation and inspired by the Minkowski problem emerged. Jerison \cite {JER1989,JER1991} first studied the Minkowski type problem of predetermined harmonic measure. Subsequently, Akman and Mukherjee \cite{AKM2023} studied the Minkowski problem of the $p$-harmonic measure and generalized Jerison's \cite{JER1991} result. Moreover, Li and Zhao \cite{LCZX2024} also proved the existence of smooth solutions to the Minkowski problem of the $p$-harmonic measure. Subsequently, Jerison \cite{JER1996} considered the Minkowski problem of electrostatic capacity measure. Concurrently, Zou and Xiong \cite{ZDX2020} extended the Minkowski problem of electrostatic capacity to the $L_p$ case. Most recently, Dai and Yi \cite{DQY2025} obtained the existence of solutions to Minkowski problems arising from sub-linear elliptic equations. 

In 2010, motivated by the work of Jerison \cite{JER1996}, Colesanti and Fimiani \cite{CAM2010} consider the Minkowski problem of torsional rigidity. Let $u$ be the solution of the boundary-value problem
\begin{align*}
\begin{cases}
\Delta u=-2 & \text { in } K, \\ 
u=0 & \text { on } \partial K.
\end{cases}
\end{align*}
Then the torsional rigidity $\tau(K)$ of an open bounded subset $K$ of $\mathbb{R}^n$ (with some basic boundary regularity) can be defined as
\begin{align*}
\tau(K)=\frac{1}{n+2} \int_{\mathbb{S}^{n-1}} h_K(\xi) |\nabla u(\mathbf{g}^{-1}_K(\xi))|^2d S(K,\xi), 
\end{align*}
the torsional measure, $S_{\tau}(K,\cdot)$, of $K$ is a Borel measure on the unit sphere $\mathbb{S}^{n-1}$ defined for a Borel set $\eta\subset \mathbb{S}^{n-1}$ by
\begin{align*} 
S_{\tau}(K,\eta)=\int_{x\in\mathbf{g}_K^{-1}(\eta)}|\nabla u(x)|^2 d\mathscr{H}^{n-1}(x),
\end{align*}
the torsional measure, $S_{\tau}(K,\cdot)$, of $K$ is absolutely continuous with respect to the surface area measure $S(K,\cdot)$ of $K$. The Minkowski problem related to torsional measure $S_{\tau}(K,\cdot)$, can be stated as follows:

\noindent{\bf The Minkowski problem of torsional rigidity:}~~{\it Let $\mu$ be a finite Borel measure on $\mathbb{S}^{n-1}$. What are necessary and sufficient conditions on $\mu$ to guarantee the existence of a convex body $K \subset \mathbb{R}^n$ containing the origin that solves the equation
$$
S_{\tau}(K,\cdot)=\mu ?
$$}
When posing the Minkowski problem of torsional rigidity, Colesanti and Fimiani \cite{CAM2010} proved the existence and uniqueness of the solution. In 2020, Chen and Dai \cite{CHD2020} obtained the existence of solutions to the $L_p$-Minkowski problem of torsional rigidity for $n + 2\neq p>1$, which extends a result of Colesanti and Fimiani \cite{CAM2010}. Subsequently, Li and Zhu \cite{LNYZ2020} proved the existence of solutions to the Orlicz-Minkowski problem for discrete measures and general measures associated with torsional rigidity. Shortly thereafter, Sun, Xu, and Zhang \cite{SXZ2021} proved the uniqueness of the $L_p$ Minkowski problem for q-torsional rigidity with $p>1$ and $q>1$ in the smooth case. Recently, Hu \cite{HJR2024,HJR20241} established the existence of new smooth solutions to the torsional Minkowski problem under the premise of providing an initial convex body and proved the torsion log-Minkowski problem without symmetry assumptions via an approximation argument.

Over the past several decades, the Laplacian operator has been pivotal in the analysis of fully nonlinear partial differential equations. As scholars have delved deeper into the traditional Laplacian operator, the Finsler $p$-Laplacian or anisotropic $p$-Laplacian has drawn considerable interest due to its distinct characteristics, which allow it to produce the Finsler Laplacian, classical Laplacian, $p$-Laplacian, and pseudo $p$-Laplacian. The origins of research on anisotropic operators can be track to the works of Bellettini and Paolini \cite{BEP1996}. In the realm of fully nonlinear differential equations, the Finsler Laplace operator is used to characterize solution attributes such as regularity and smoothness. As a fundamental instrument, the Finsler Laplace operator aids in the examination of geometric manifold properties. Studies into Finsler Laplacian operators have been widely pursued, as evidenced by the literature, such as in reference \cite{BCG2018,BCS2016,FNS2020,CGL2020,CCPS2009,CSP2023,PDG2014,FNS2018,GRAM024,JAR2014,WGXZ2011,WGX2012,XCY2022,YH2014}.

In this paper, we will consider the Minkowski problem associated with the solution to anisotropic $p$-Laplacian. Let $F^p$ be strictly convex, $K$ be bounded open set of $\mathbb{R}^n$, $1<p<\infty$ and $n \geq 2$, let $u$ be the solution of the boundary-value problem
\begin{align}\label{Eq:YXBJZWT}
\left\{\begin{array}{lll}
\Delta_p^F u=-1 & \text { in } & K, \\
u=0 & \text { on } & \partial K,
\end{array}\right.
\end{align}
where $\Delta_p^F$ is the Finsler $p$-Laplacian (or anisotropic $p$-Laplacian) operator which is given by
\begin{align*}
\Delta_p^F u=\operatorname{div}\left(F^{p-1}(\nabla u) \nabla_{\xi} F(\nabla u)\right),
\end{align*}
then the anisotropic $p$-torsional rigidity $\tau_{F,p}(K)$ of $K$ (the number $\tau_{F,p}(K)>0$) is defined by (see \cite{PDG2014} or Section \ref{SEC3})
\begin{align*}
\tau_{F,p}(K)=\int_{K} F^p\left(\nabla u\right) d x=\int_{K} u d x,
\end{align*}
from this and Anisotropic Poho\v{z}aev identity, we deduce (see Section \ref{SEC3})
\begin{gather*}
\tau_{F,p}(K)=\frac{p-1}{n(p-1)+p} \int_{\mathbb{S}^{n-1}} h_K(\xi) F^p(\nabla u(\mathbf{g}^{-1}_K(\xi)))d S(K,\xi). 
\end{gather*}

If $K$ is a convex body in $\mathbb{R}^n$ that contains the origin in its interior, then the anisotropic $p$-torsional measure, $S_{F,p}(K,\cdot)$, of $K$ is a Borel measure on the unit sphere $\mathbb{S}^{n-1}$ defined for a Borel set $\eta\subset \mathbb{S}^{n-1}$ by
\begin{align*}
S_{F,p}(K,\eta)=\int_{x\in\mathbf{g}_K^{-1}(\eta)}F^p\left(\nabla u(x)\right) d\mathscr{H}^{n-1}(x),
\end{align*}
where $u \in W_0^{1, p}(K) \backslash\{0\}$ and $\mathbf{g}_K: \partial^{\prime} K \rightarrow \mathbb{S}^{n-1}$ is the Gauss map of $K$, defined on $\partial^{\prime} K$, the set of points of $\partial K$ that have a unique outer unit normal. Obviously the anisotropic $p$-torsional measure, $S_{F,p}(K,\cdot)$, of $K$ is absolutely continuous with respect to the surface area measure $S(K,\cdot)$. In addition, the cone anisotropic $p$-torsional measure, $\tau_{F,p}^{\rm{log}}(K,\cdot)$, of $K$ is a Borel measure on the unit sphere $\mathbb{S}^{n-1}$ defined for a Borel set $\eta\subset \mathbb{S}^{n-1}$ by
\begin{gather*}
\tau_{F,p}^{\rm{log}}(K,\eta)=\frac{p-1}{n(p-1)+p}\int_{x\in\mathbf{g}_K^{-1}(\eta)}\langle x, \mathbf{g}_K(x)\rangle F^p\left(\nabla u(x)\right) d\mathscr{H}^{n-1}(x).
\end{gather*} 

There is a close relationship between the anisotropic $p$-torsional rigidity and the classical torsional rigidity. This prompts us to consider the Minkowski problem of anisotropic $p$-torsional rigidity in a similar way.The study of the Minkowski problem related to anisotropic operators can be traced back to the research work of Andrews \cite{ABEND2001}. Afterward, Xia \cite{XC2013} investigated the anisotropic Minkowski problem, that is, prescribing the anisotropic Gauss-Kronecker curvature for a closed strongly convex hypersurface in $\mathbb{R}^{n + 1}$ as a function of its anisotropic normals. In 2022, Akman, Gong, Hineman, Lewis  and Vogel \cite{AGHLV2022} investigated the Minkowski problem associated with nonlinear capacity, which includes the Minkowski problem of anisotropic $p$-capacity. Based on this, we will present and investigate the following Minkowski problem of anisotropic $p$-torsional rigidity.

\noindent{\bf The Minkowski problem of anisotropic $p$-torsional rigidity:}~~{\it 
 Let $1<p<\infty$ and $\mu$ be a Borel measure on $\mathbb{S}^{n-1}$. What properties must $\mu$ have that we may guarantee that it is the anisotropic $p$-torsional measure $S_{F,p}(K, \cdot)$ of some convex body of $\mathbb{R}^n$, i.e.,
$$
S_{F,p}(K, \cdot)=\mu?
$$}

Obviously, when a given finite Borel measure has a positive density $f(\xi)$ on the unit sphere $\mathbb{S}^{n-1}$, $K \subset \mathbb{R}^n$ is a bounded domain with the boundary of class $C^{2, \alpha}$, and $u$ is a unique solution to Problem (\ref{Eq:YXBJZWT}), for $1<p<\infty$, then the existence of Minkowski problem of anisotropic $p$-torsional measure equivently to solving the following Monge-Amp\`{e}re equation
$$
 F^p(\nabla u(\mathbf{g}^{-1}_K(\xi))) \operatorname{det}(h_{i j}+h \delta_{i j})(\xi)=f(\xi),  \quad\xi \in \mathbb{S}^{n-1}.
$$

Firstly, we give the following sufficient and necessary conditions of the existence of solution to the Minkowski problem for anisotropic $p$-torsional rigidity.

\begin{theorem}\label{THm:jdMwt}
 Suppose $1<p<\infty$. If $\mu$ is a non-zero, finite Borel measure on $\mathbb{S}^{n-1}$, then there exists a convex body $K \in \mathscr{K}^n$, so that
$$
S_{F,p}(K,\cdot)=\mu,
$$
if and only if $\mu$ is not concentrated on any closed hemisphere of $\mathrm{S}^{n-1}$, and
\begin{align}\label{Eq:Cxbtj}
\int_{\mathbb{S}^{n-1}} v d \mu(v)=0.
\end{align}
\end{theorem}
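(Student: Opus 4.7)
The plan is to establish necessity by an integration-by-parts identity involving the Finsler $p$-Laplace equation, and to establish sufficiency via a constrained minimization problem, following the general scheme of Colesanti--Fimiani \cite{CAM2010} and Chen--Dai \cite{CHD2020}.

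For necessity, assume $S_{F,p}(K,\cdot)=\mu$ for some $K\in\mathscr{K}^n$. The non-concentration of $\mu$ on any closed hemisphere is inherited from the classical surface area measure $S(K,\cdot)$ together with positivity of the density $F^p(\nabla u\circ\mathbf{g}_K^{-1})$ on the relevant portion of $\partial K$. To verify \eqref{Eq:Cxbtj}, I would unfold its left-hand side as $\int_{\partial K}\nu_K F^p(\nabla u)\,d\mathscr{H}^{n-1}$, apply the divergence theorem componentwise to convert it to $p\int_K F^{p-1}(\nabla u)\,F_{\xi_k}(\nabla u)\,u_{ik}\,dx$, and then integrate by parts once more in $x_k$. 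Using the equation $\Delta^F_p u=-1$, the vanishing of $\int_K u_i\,dx$ (since $u=0$ on $\partial K$), the fact that $\nabla u$ is parallel to $\nu_K$ on $\partial K$, and Euler's identity $\xi\cdot \nabla_\xi F(\xi)=F(\xi)$, the bulk term collapses back to $p\int_{\partial K} F^p(\nabla u)\,\nu_i\,d\mathscr{H}^{n-1}$. The resulting self-referential identity forces the $i$-th component $I_i$ of \eqref{Eq:Cxbtj} to satisfy $I_i=pI_i$, hence $I_i=0$ whenever $p\neq 1$.

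For sufficiency, I would minimize the functional $\Psi(L):=\int_{\mathbb{S}^{n-1}} h_L\,d\mu$ over $L\in\mathscr{K}^n$ subject to $\tau_{F,p}(L)=1$. Continuity of $\tau_{F,p}$ under Hausdorff convergence and lower semicontinuity of $\Psi$ provide the basic variational framework. The non-concentration hypothesis on $\mu$ yields a lower bound of the form $\Psi(L)\geq c\cdot\mathrm{diam}(L)$ modulo translation, while \eqref{Eq:Cxbtj} makes both $\Psi$ and the constraint $\tau_{F,p}(\cdot)=1$ translation invariant, so one may normalize by placing, say, the Steiner point at the origin. A minimizing sequence then admits a Hausdorff-convergent subsequence with limit $K_0\in\mathscr{K}^n$, and the constraint $\tau_{F,p}(K_0)=1>0$ rules out degeneration of $K_0$ to a lower-dimensional set.

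The final step identifies the Euler--Lagrange equation. Invoking the Hadamard-type variational formula for $\tau_{F,p}$ under Wulff-shape perturbations (expected to be derived in an earlier section of the paper), which identifies $S_{F,p}(K_0,\cdot)$ with the first variation of $\tau_{F,p}$ against the support function, a Lagrange multiplier argument yields $S_{F,p}(K_0,\cdot)=\lambda\mu$ for some $\lambda>0$. The scaling $u_{tK_0}(x)=t^{p/(p-1)}u_{K_0}(x/t)$ produces $S_{F,p}(tK_0,\cdot)=t^{\alpha}\,S_{F,p}(K_0,\cdot)$ with $\alpha=((n-1)(p-1)+p)/(p-1)>0$, so choosing $t=\lambda^{-1/\alpha}$ and setting $K=tK_0$ completes the proof. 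The principal obstacles I anticipate are (i) the compactness and non-degeneracy argument for the minimizer, in which both hypotheses on $\mu$ play essential roles, and (ii) the rigorous derivation of the Hadamard variational formula for anisotropic $p$-torsional rigidity, which in turn relies on continuity of $S_{F,p}$ under Hausdorff convergence of the underlying bodies.
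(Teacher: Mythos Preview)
Your sufficiency argument is essentially the paper's own: the paper defines the scale-invariant quotient $\Psi_{F,p,\mu}(K)=\|h_K:\mu\|/\tau_{F,p}(K)^{(p-1)/(np+p-n)}$, then restricts to $\tau_{F,p}(K)=1$ and normalizes by translation, uses non-concentration of $\mu$ to bound the minimizing sequence, Blaschke selection to extract a limit $K_0$, the constraint $\tau_{F,p}(K_0)=1$ to rule out degeneration, and finally Lemma~\ref{lembfgsn} (the Hadamard-type variational formula you anticipate) together with the homogeneity $S_{F,p}(tK,\cdot)=t^{(n(p-1)+1)/(p-1)}S_{F,p}(K,\cdot)$ to absorb the Lagrange multiplier. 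Your exponent $((n-1)(p-1)+p)/(p-1)$ is the same number.

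Your necessity argument, however, differs from the paper's. For \eqref{Eq:Cxbtj} the paper gives a soft one-line proof (Lemma~\ref{lembyxzm}): since both $\tau_{F,p}$ and $S_{F,p}$ are translation invariant (Proposition~\ref{prop:apxz}(b),(c)), one expands $\int(h_{K+\lambda u}-h_K)\,dS_{F,p}(K,\cdot)=\frac{n(p-1)+p}{p-1}\bigl(\tau_{F,p}(K+\lambda u)-\tau_{F,p}(K)\bigr)=0$ and concludes. Your route is instead a direct double integration by parts against the equation $\Delta_p^F u=-1$, using Euler's identity for $F$ and the fact that $\nabla u$ is normal to $\partial K$ to obtain $I_i=pI_i$. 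This is correct and self-contained, but it requires enough regularity to justify the second integration by parts (effectively $u\in C^2$ up to the boundary), whereas the paper's argument only needs the translation invariance of $\tau_{F,p}$, already established from the variational characterization \eqref{Eq:patm}. Either approach is fine; the paper's is slicker once Proposition~\ref{prop:apxz} is in hand, while yours avoids that detour at the cost of a smoothness assumption.
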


Next, the existence of solution to the log-Minkowski problem of anisotropic $p$-torsional rigidity is given in the the case of discrete measures whose supports are in general position. 
\begin{theorem}\label{thmlslamp}
 Let $1<p<\infty$ and $\mu$ be a discrete measure on $\mathbb{R}^n$ whose support set is not contained in any closed hemisphere and is in general position in dimension $n$. Then there exists a polytope $P$ containing the origin in its interior such that
$$
\tau_{F,p}^{\rm{log}}(P, \cdot)=\mu.
$$
\end{theorem}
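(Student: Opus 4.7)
The plan is to adapt the approximation scheme of Hu \cite{HJR20241}, who solved the classical torsional log-Minkowski problem without symmetry, to the present anisotropic $p$-torsional setting. Writing the discrete measure as $\mu = \sum_{i=1}^N c_i \delta_{v_i}$ with $c_i > 0$, for each $\mathbf{z} = (z_1,\ldots,z_N) \in (0,\infty)^N$ set
\[
P[\mathbf{z}] = \{x \in \mathbb{R}^n : \langle x, v_i\rangle \le z_i,\ i=1,\ldots,N\}.
\]
The hypotheses on $\operatorname{supp}\mu$ guarantee that $P[\mathbf{z}]$ is a bounded convex polytope with $0 \in \operatorname{int} P[\mathbf{z}]$. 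Let $\beta := (n(p-1)+p)/(p-1)$ denote the degree of homogeneity of $\tau_{F,p}$ under dilations (which follows from the scaling $u_{tK}(x) = t^{p/(p-1)}u_K(x/t)$ of the solution of \eqref{Eq:YXBJZWT}), and note the facet-wise identity $z_j S_{F,p}(P[\mathbf{z}],\{v_j\}) = \beta\, \tau_{F,p}^{\log}(P[\mathbf{z}],\{v_j\})$ obtained by restricting the anisotropic Poho\v{z}aev formula to the $j$-th facet $F_j$, where $\langle x,v_j\rangle \equiv z_j$.

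Because the logarithmic functional $\sum c_i \log z_i$ is not coercive on the admissible set, I would instead solve a family of $L_q$-problems and pass to the limit $q \to 0^+$. For each $q \in (0,1)$, consider
\[
\max\Bigl\{ \sum_{i=1}^N \frac{c_i}{q} z_i^q : \mathbf{z} \in (0,\infty)^N,\ \tau_{F,p}(P[\mathbf{z}]) \le 1 \Bigr\}.
\]
Existence of a maximizer $P_q := P[\mathbf{z}_q]$ follows from direct methods: uniform upper bounds on the $z_i$'s come from the positive spanning of $\{v_i\}$ together with an inradius bound $\tau_{F,p}(P) \ge c\, r(P)^\beta$, and the power $z_i^q$ avoids the $-\infty$ blow-down of $\log z_i$ at $z_i = 0$. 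The Lagrange multiplier condition, combined with the Hadamard-type variational formula $\partial_{z_j}\tau_{F,p}(P[\mathbf{z}]) = S_{F,p}(P[\mathbf{z}],\{v_j\})$, yields at the maximizer the equation
\[
c_j = \gamma_q\, z_{q,j}^{1-q}\, S_{F,p}(P_q, \{v_j\}), \qquad j=1,\ldots,N,
\]
for some positive scalar $\gamma_q$.

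The main obstacle is passing to the limit $q \to 0^+$. I would argue that $\{P_q\}$ admits a Hausdorff convergent subsequence whose limit $P_*$ is a bounded polytope with $0 \in \operatorname{int} P_*$ and with every $v_i$ still a genuine facet normal: the required uniform bounds $\delta \le z_{q,j} \le C$ and the exclusion of facet collapse are precisely where the general position hypothesis enters decisively, following the scheme used by Hu \cite{HJR20241} in the classical case. Continuity of $P\mapsto S_{F,p}(P,\cdot)$ on nondegenerate polytopes (which ultimately rests on stability of the solution of \eqref{Eq:YXBJZWT} under Hausdorff convergence of the domain) then permits passage to the limit, yielding $c_j = \gamma_*\, z_{*,j}\, S_{F,p}(P_*, \{v_j\}) = \gamma_*\beta\, \tau_{F,p}^{\log}(P_*, \{v_j\})$. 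Summing and using the total-mass identity $\tau_{F,p}^{\log}(P_*, \mathbb{S}^{n-1}) = \tau_{F,p}(P_*) = 1$ pins down $\gamma_* = |\mu|/\beta$, so $\tau_{F,p}^{\log}(P_*, \{v_j\}) = c_j/|\mu|$. The polytope $P := |\mu|^{1/\beta} P_*$ then satisfies $\tau_{F,p}^{\log}(P, \cdot) = \mu$ via the scaling $\tau_{F,p}^{\log}(tK,\cdot) = t^\beta\, \tau_{F,p}^{\log}(K,\cdot)$, completing the proof.
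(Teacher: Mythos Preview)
Your approach is genuinely different from the paper's. The paper does not use an $L_q \to L_0$ approximation; it attacks the discrete logarithmic problem directly via Zhu's min--max scheme \cite{ZGX2014}: one minimizes $\max_{\eta \in \operatorname{Int}(K)} \sum_i \alpha_i \log(h_K(u_i) - \eta \cdot u_i)$ over $K \in \mathcal{P}(u_1,\ldots,u_N)$ with $\tau_{F,p}(K) = 1$. The inner max over $\eta$ recenters the polytope and is precisely what makes the outer infimum attained despite the non-coercivity of $\log$; general position enters (via Lemma \ref{lempwj} and Corollary \ref{corpwj}) to bound the minimizing sequence, and then again in a separate contradiction argument (Lemma \ref{lemqpa6c}) to force the minimizer to have exactly $N$ facets. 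The Euler--Lagrange computation (Lemma \ref{lemqpa6g}) finishes after rescaling.

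Your proposal has a concrete gap already at the first step: the $L_q$ problem $\max\bigl\{\sum_i c_i z_i^q/q : \mathbf{z}\in(0,\infty)^N,\ \tau_{F,p}(P[\mathbf{z}]) \le 1\bigr\}$ need not have a maximizer. As soon as $N \ge n+2$ and, for some $j$, the remaining normals $\{v_i\}_{i\ne j}$ still positively span $\mathbb{R}^n$, one can send $z_j \to \infty$ while shrinking the other $z_i$ so that $P[\mathbf{z}]$ stays in a fixed ball and $\tau_{F,p}(P[\mathbf{z}]) \le 1$; the objective then blows up through the term $c_j z_j^q$. The functional must therefore be written in terms of $h_{P[\mathbf{z}]}(v_i)$ rather than the free parameter $z_i$, and then one is forced to prove that the maximizer has all $N$ facets genuine---exactly the obstacle the paper handles with its contradiction argument. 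Beyond this, the uniform-in-$q$ lower bound $z_{q,j} \ge \delta$ that you defer to Hu \cite{HJR20241} is not supplied by that reference: the ``approximation argument'' of Hu, as invoked in this paper, approximates a general measure by discrete ones, not $L_0$ by $L_q$, so the cited scheme does not furnish the estimate you need.
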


Finally, the existence of solution to the log-Minkowski problem of anisotropic $p$-torsional rigidity for general convex body is proved by Theorem \ref{thmlslamp}.

\begin{theorem}\label{thmyblmp}
 Let $\mu$ be a Borel measure on $\mathbb{R}^n$ and satisfies the subspace mass inequality for index $2\leq p<\infty$,
\begin{align}\label{Eq:ZKJZL}
\frac{\mu\left(\xi_i \cap \mathbb{S}^{n-1}\right)}{|\mu|}<1-\frac{(n(p-1)+p)(n-i)}{n(n+2)(p-1)},
\end{align}
for each $i$ dimensional subspace $\xi_i \subset \mathbb{R}^n$, and each $i=1, \ldots, n-1$. Then there exists a convex body $K$ containing the origin in its interior such that
$$
\tau_{F,p}^{\rm{log}}(K, \cdot)=\mu.
$$
\end{theorem}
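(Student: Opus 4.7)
The plan is to deduce Theorem \ref{thmyblmp} from the discrete result Theorem \ref{thmlslamp} via a weak approximation argument, together with a non-degeneracy estimate based on the subspace mass inequality (\ref{Eq:ZKJZL}).

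\textbf{Step 1: Discrete approximation.} Given $\mu$ as in the hypothesis, I would construct a sequence of discrete measures $\mu_k$ on $\mathbb{S}^{n-1}$ with the following properties: $\mu_k \rightharpoonup \mu$ weakly, $|\mu_k| \to |\mu|$, the support of each $\mu_k$ is in general position and is not contained in any closed hemisphere, and $\mu_k$ satisfies the subspace mass inequality (\ref{Eq:ZKJZL}) (perhaps with an arbitrarily small uniform slack), for all $k$ large. This is a standard partition-and-perturb construction on $\mathbb{S}^{n-1}$: refine a Borel partition so that cell diameters tend to zero, place an atom in each cell with mass equal to the $\mu$-mass of the cell, and perturb atom locations slightly to enforce general position while preserving the limiting concentration bounds.

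\textbf{Step 2: Invoking the discrete theorem.} By Theorem \ref{thmlslamp}, for each $k$ there exists a polytope $P_k$ with the origin in its interior such that $\tau_{F,p}^{\rm{log}}(P_k,\cdot) = \mu_k$.

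\textbf{Step 3: A priori bounds on $P_k$.} This is the main obstacle. I must show that $P_k$ stays uniformly bounded from above and that, after translating if necessary, no subsequence collapses into a proper subspace. For the upper bound, I would argue that if $\mathrm{diam}(P_k) \to \infty$, then comparing the solution of (\ref{Eq:YXBJZWT}) with the solution on a large Finsler ball forces $|\mu_k| = \tau_{F,p}^{\rm{log}}(P_k,\mathbb{S}^{n-1}) \cdot (\text{const})^{-1}$ to blow up, contradicting $|\mu_k| \to |\mu| < \infty$. The harder non-degeneracy step proceeds by contradiction: suppose, after passing to a subsequence, the support function of $P_k$ becomes concentrated near a proper subspace $\xi_i$ of dimension $i$. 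Then the behavior of $F^p(\nabla u_{P_k})$ in a degenerating ``slab'' around $\xi_i$, obtained by constructing slab comparison domains and scaling the anisotropic $p$-torsion function, yields a lower bound on $\mu_k(\xi_i \cap \mathbb{S}^{n-1})/|\mu_k|$ that, in the limit, violates (\ref{Eq:ZKJZL}). The precise constant $1 - \frac{(n(p-1)+p)(n-i)}{n(n+2)(p-1)}$ is chosen to match exactly the leading-order rate at which the cone anisotropic $p$-torsional measure concentrates as $P_k$ flattens toward $\xi_i$; the restriction $p \geq 2$ is needed to make the slab estimates for the Finsler $p$-Laplacian uniform.

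\textbf{Step 4: Passing to the limit.} Blaschke's selection theorem yields a subsequence $P_{k_j}$ converging in Hausdorff metric to a convex body $K$; by Step 3, $K$ contains the origin in its interior. Continuity of the solution to (\ref{Eq:YXBJZWT}) with respect to Hausdorff convergence of domains containing the origin in their interior, together with continuity of the Gauss map and $F^p(\nabla u)$ away from the singular set, gives $\tau_{F,p}^{\rm{log}}(P_{k_j},\cdot) \rightharpoonup \tau_{F,p}^{\rm{log}}(K,\cdot)$. Comparing with $\mu_{k_j} \rightharpoonup \mu$ and invoking uniqueness of weak limits yields $\tau_{F,p}^{\rm{log}}(K,\cdot) = \mu$, completing the proof. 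The conceptual difficulty is entirely concentrated in Step 3: the non-degeneracy of the approximating polytopes, where one must exploit the precise scaling of $F^p(\nabla u)$ near a flat face to match the numerical constant appearing in the subspace mass inequality.
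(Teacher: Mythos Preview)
Your overall architecture (Steps 1, 2, 4) matches the paper's, but Step 3 contains a genuine gap. Your proposed upper-diameter bound is simply false: large $\mathrm{diam}(P_k)$ does \emph{not} force $\tau_{F,p}(P_k)$ to blow up. A long thin slab has arbitrarily large diameter while its anisotropic $p$-torsional rigidity stays bounded (indeed, it can tend to zero). Since $\tau_{F,p}(\bar P_k)=\tau_{F,p}^{\rm log}(\bar P_k,\mathbb{S}^{n-1})=|\bar\mu_k|=|\mu|$ is automatically fixed, no contradiction arises from your comparison with a large Finsler ball. The slab/PDE comparison you sketch for non-degeneracy is also speculative: it is not clear how local gradient estimates on a flattening face would pin down the exact threshold $1-\tfrac{(n(p-1)+p)(n-i)}{n(n+2)(p-1)}$.

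What you are missing is that the polytopes produced by Theorem \ref{thmlslamp} are not arbitrary solutions: they are \emph{minimizers} of the functional $\Psi_{\bar\mu_k,K}(\eta(K))=\int_{\mathbb{S}^{n-1}}\log(h_K-\eta(K)\cdot u)\,d\bar\mu_k$ under the constraint $\tau_{F,p}(K)=1$. The paper exploits this minimality in an essential way. First, comparing with a rescaled reference polytope $r_0(k)Q_k\subset 4r_0\mathbb{B}^n$ gives a uniform \emph{upper} bound $\Psi_{\bar\mu_k,\bar P_k}(o)<c_0$ (Lemma \ref{lem59}). Second, if $\bar P_k$ were unbounded, one inscribes the John ellipsoid $E_k$ with semi-axes $a_{1,k}\le\cdots\le a_{n,k}$ and, using the subspace mass inequality transferred to $\bar\mu_k$ on the partition $A_{l,\delta_0}$ of $\mathbb{S}^{n-1}$ together with an Abel-summation argument (Lemmas \ref{lem59cg}, \ref{lem:tqgj}), derives a \emph{lower} bound for $\Psi_{\bar\mu_k,E_k}(o)$ containing the term $t_0\log a_{n,k}$. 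The constraint $\tau_{F,p}(E_k)\ge n^{-(n(p-1)+p)/(p-1)}|\mu|$ controls the remaining terms, and the precise constants $\Theta_i$ in (\ref{Eq:ZKJZL}) are exactly what make the volume term $\log|E_k|$ cancel against $\log\tau_{F,p}(E_k)$. Thus $a_{n,k}\to\infty$ forces $\Psi\to\infty$, contradicting the upper bound. The restriction $p\ge 2$ enters only to ensure $\Theta_i\ge i/n$, not through PDE regularity as you suggest.
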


\begin{remark}
Since the classical torsional rigidity is a specific case of the anisotropic $p$-torsional rigidity. When $p = 2$ and $F(\xi) = \sum_k|\xi_k|$ in Theorem \ref{THm:jdMwt} and Theorem \ref{thmyblmp}, we can obtain the result of Colesanti and Fimiani's (see \cite[Theorem 1.1]{CAM2010}) as a particular case of Theorem \ref{THm:jdMwt}, and also the result of Hu's (see \cite[Theorem 1.2]{HJR20241}) as a special case of Theorem \ref{thmyblmp}.
\end{remark}

The paper is organized as follows. In Section \ref{SEC2}, we fix some notations and recall the corresponding results about convex geometry. In Section \ref{SEC3}, we recall some results about the anisotropic $p$-torsional rigidity. Meanwhile, we prove the basic properties of the anisotropic $p$-torsional rigidity and the variational formula for it. In Section \ref{SEC4}, we prove Theorem \ref{THm:jdMwt} using a variational method. Finally, in Section \ref{SEC5}, we provide the proof of Theorem \ref{thmlslamp} and then employ the approximation method to prove Theorem \ref{thmyblmp}.

\section{Notations and Background Materials}\label{SEC2}

\subsection{Convex Geometry}

Let $\mathscr{K}^n$ denote the set of convex bodies (compact, convex subsets with non-empty interiors) in Euclidean space $\mathbb{R}^n$. Let $\mathscr{K}^n_o$ denote the set of convex bodies containing the origin in their interiors in $\mathbb{R}^n$. For the set of star bodies (about the origin) in $\mathbb{R}^n$, we write $\mathscr{S}^n_o$. Besides, let $\mathbb{S}^{n-1}$ denote the unit sphere, $\mathbb{B}^{n}$ denote the unit ball and $|K|$ denote the $n$-dimensional volume of $K$.

First, let's introduce some basic knowledges about polytope (see \cite{ZGX2014,LEE2021,SRA2014}). Let $\alpha_1,\cdots,\alpha_N$ be positive constants and  $\delta_{u_k}$ denotes the delta measure that is concentrated at the point $u_k$, then the discrete measure $\mu$ is expressed as 
\begin{align}\label{Eq:sjj2}
\mu=\sum_{k=1}^N \alpha_k \delta_{u_k}.
\end{align}

A subset $U$ of $\mathbb{S}^{n - 1}$ lies within a closed hemisphere if there exists a vector $u \in \mathbb{S}^{n - 1}$ such that $U$ is contained in $\{v \in \mathbb{S}^{n - 1} : u \cdot v \geq 0\}$. 

A finite subset $U$ of $\mathbb{S}^{n - 1}$ (with no less than $n$ elements) is in general position if any $n$ elements of $U$ are linearly independent.

A polytope in $\mathbb{R}^n$ is the convex hull of a finite set of points with positive $n$-dimensional volume in $\mathbb{R}^n$. If the convex hull of a certain subset lies on the boundary of the polytope  and has positive $(n - 1)$-dimensional volume, it is called a face of the polytope .

Let the unit vectors $u_1,\cdots,u_N \in \mathbb{S}^{n - 1}$ not be concentrated on any closed hemisphere. Denote $\mathcal{P}(u_1,\cdots,u_N)$ as the set of polytope 
$$P = \bigcap_{i = 1}^N \{x \in \mathbb{R}^n : x \cdot u_i \leq a_i\},$$
where $a_1,\cdots,a_N \in \mathbb{R}$. Let $\mathcal{P}_N(u_1,\cdots,u_N)$ be a subset of $\mathcal{P}(u_1,\cdots,u_N)$, and its elements have exactly $N$ faces. Among them, for the polytope  $P$ in $\mathcal{P}(u_1,\cdots,u_N)$, the maximum number of its faces is $N$, and the outer normal vectors of $P$ are a subset of $\{u_1,\cdots,u_N\}$. 

A convex body $K\in \mathscr{K}^n$, is uniquely determined by its support function, $h(K, \cdot)=h_K:\mathbb{R}^n\rightarrow\mathbb{R}$, which is defined by (see \cite{SRA2014})
\begin{align}\label{SF}
h(K, x)=\max\{ \langle x, y\rangle : y\in K\}, \ \ \ \ x\in \mathbb{R}^n,
\end{align}
where $\langle x, y\rangle$ denotes the standard inner product of $x$ and $y$ in $\mathbb{R}^n$. It is also clear from the definition that $h(K,u) \leq h(L,u)$ for $u \in \mathbb{S}^{n-1}$, if and only if $K\subseteq L$.

For $K,L\in\mathscr{K}_o^n$, and $\alpha,\beta\geq0$ (not both zero), the Minkowski combination, $\alpha\cdot K+ \beta\cdot L$, of $K$ and $L$ is defined by (see \cite{SRA2014})
\begin{align}\label{lpmz}
h(\alpha\cdot K+ \beta\cdot L,\cdot)=\alpha h(K,\cdot)+\beta h(L,\cdot),
\end{align}
where $``+"$ denotes the Minkowski-sum and $\alpha\cdot K=\alpha K$ is the Minkowski scalar multiplication.

If $K$ is a compact star-shaped set (about the
origin) in $\mathbb{R}^n$, then its radial function,
$\rho_K=\rho(K,\cdot):\mathbb{R}^n\setminus\{0\}\rightarrow[0,\infty)$, is defined by
\begin{align}\label{RF}
\rho(K,x)=\max\{c\geq0: c x\in K\}, \ \ x\in\mathbb{R}^n\setminus \{0\}.
\end{align}
If $\rho_K$ is positive and continuous, $K$ will be called a star body (with respect to the origin).

The radial map $r_K: \mathbb{S}^{n-1} \rightarrow \partial K$ is
$$
r_K(\xi)=\rho_K(\xi) \xi,
$$
for $\xi \in \mathbb{S}^{n-1}$, i.e. $r_K(\xi)$ is the unique point on $\partial K$ located on the ray parallel to $\xi$ and emanating from the origin.

Using this we see that the outer unit normal vector to $\partial K$ at $x$, denoted by $\mathbf{g}(x)$, is well defined for $\mathscr{H}^{n-1}$ almost all $x \in \partial K$. The map $\mathbf{g}: \partial K \rightarrow \mathbb{S}^{n-1}$ is called the Gauss map of $K$. For $\omega \subset \mathbb{S}^{n-1}$, let
$$
\mathbf{g}^{-1}(\omega)=\{x \in \partial K: \mathbf{g}(x) \text { is defined and } \mathbf{g}(x) \in \omega\}.
$$

If $\omega$ is a Borel subset of $\mathbb{S}^{n-1}$, then $\mathbf{g}^{-1}(\omega)$ is $\mathscr{H}^{n-1}$-measurable. The Borel measure $S_K$, on $\mathbb{S}^{n-1}$, is defined for Borel $\omega \subset \mathbb{S}^{n-1}$ by
$$
S_K(\omega)=\mathscr{H}^{n-1}(\mathbf{g}^{-1}(\omega)), 
$$
which is called the surface area measure of $K$. For every $f \in C(\mathbb{S}^{n-1})$,
\begin{align}\label{Eq:gdfs}
\int_{\mathbb{S}^{n-1}} f(\xi) d S_K(\xi)=\int_{\partial K} f(\mathbf{g}(x)) d \mathscr{H}^{n-1}(x).
\end{align}

Let $C^+(\mathbb{S}^{n-1})$ denote a positive continuous function on $\mathbb{S}^{n-1}$, and $E$ be a closed subset of $\mathbb{S}^{n-1}$ that is not contained in a closed hemisphere, if $f\in C^+(\mathbb{S}^{n-1})$, the Wulff shape $[f]$ associated with $f$ is defined as
\begin{align*}
[f]=\left\{x \in \mathbb{R}^{n}: x \cdot u \leq f(u), \text { for all } u \in E\right\}.
\end{align*}
Obviously, $h_{[f]}(u)\leq h(u).$

\begin{lemma}[Porposition 2.5 of \cite{KLL2023}]\label{lem:jxy1}
Let $K \in \mathscr{K}_0^n$, the Jacobian of $r_K: \mathbb{S}^{n-1} \rightarrow \partial K$ is $\frac{\rho_K^n(v)}{h_K\left(\mathbf{g}_K\left(r_K(v)\right)\right)}$ up to a set of measure zero.
\end{lemma}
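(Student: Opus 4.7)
The plan is to compute the Jacobian of $r_K$ by a direct frame calculation on $\mathbb{S}^{n-1}$ and then identify the resulting expression with $\rho_K^n/h_K(\mathbf{g}_K\circ r_K)$ via the support-function identity $h_K(\nu)=\langle x,\nu\rangle$ at a boundary point $x$ with outer normal $\nu$.

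First I would dispose of the ``up to a set of measure zero'' clause. Since $K\in\mathscr{K}_o^n$, the radial function $\rho_K$ is Lipschitz on $\mathbb{S}^{n-1}$, hence by Rademacher's theorem differentiable at $\sigma$-a.e.\ $v$; convexity of $K$ also guarantees that $\partial K$ admits a unique outer unit normal $\mathscr{H}^{n-1}$-a.e., and this exceptional set pulls back under $r_K$ to a $\sigma$-null subset of $\mathbb{S}^{n-1}$ (using that the Jacobian we are about to compute is bounded away from $0$ on any compact piece). I would work at any $v$ in the complement.

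At such $v$, pick an orthonormal basis $\{e_1,\ldots,e_{n-1}\}$ of $T_v\mathbb{S}^{n-1}=v^\perp$ and set $g=\nabla_{\mathbb{S}^{n-1}}\rho_K(v)\in v^\perp$. Differentiating $r_K(v)=\rho_K(v)v$ yields
$$dr_K(e_i)=\rho_K(v)\,e_i+\langle g,e_i\rangle\,v,$$
so the Gram matrix is $G_{ij}=\rho_K^2(v)\delta_{ij}+\langle g,e_i\rangle\langle g,e_j\rangle$, i.e.\ a rank-one perturbation of $\rho_K^2(v)\,I$. The matrix determinant lemma then gives
$$J_{r_K}(v)=\sqrt{\det G}=\rho_K^{n-2}(v)\sqrt{\rho_K^2(v)+|g|^2}.$$

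The last step is to match this against $\rho_K^n(v)/h_K(\mathbf{g}_K(r_K(v)))$. A direct orthogonality check shows $\rho_K(v)v-g\perp dr_K(e_i)$ for each $i$, so it is normal to $\partial K$ at $x=r_K(v)$; choosing the sign so that $\langle x,\nu\rangle>0$ forces
$$\nu=\mathbf{g}_K(r_K(v))=\frac{\rho_K(v)\,v-g}{\sqrt{\rho_K^2(v)+|g|^2}}.$$
Because the outer normal $\nu$ is attained at $x$, one has $h_K(\nu)=\langle x,\nu\rangle=\rho_K^2(v)/\sqrt{\rho_K^2(v)+|g|^2}$, and therefore
$$\frac{\rho_K^n(v)}{h_K(\mathbf{g}_K(r_K(v)))}=\rho_K^{n-2}(v)\sqrt{\rho_K^2(v)+|g|^2}=J_{r_K}(v),$$
which is the claim. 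The main (minor) obstacle is fixing the correct outward orientation in the formula for $\nu$ and confirming the null-set clause via Rademacher plus convexity; the rest is routine linear algebra via the matrix determinant lemma.
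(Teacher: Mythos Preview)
Your argument is correct. The paper does not supply its own proof of this lemma; it is quoted verbatim as Proposition~2.5 of \cite{KLL2023} and simply invoked where needed. Your direct frame computation---Gram matrix via the rank-one perturbation $\rho_K^2 I+gg^T$, followed by identifying the outer normal as $(\rho_K v-g)/\sqrt{\rho_K^2+|g|^2}$ and using $h_K(\nu)=\langle r_K(v),\nu\rangle$---is the standard way this formula is derived in the literature and is exactly how Kryvonos--Langharst argue. There are no gaps: the Rademacher/convexity justification for the a.e.\ clause is adequate, and the sign choice for $\nu$ is correctly forced by $\langle r_K(v),\nu\rangle=\rho_K^2/\sqrt{\rho_K^2+|g|^2}>0$.
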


\begin{lemma}[Lemma 2.6 of \cite{KLL2023}]\label{lem:jxy}
 Let $K \in \mathscr{K}_0^n$ and $f \in C(\mathbb{S}^{n-1})$. Then there exists a small $\delta>0$ and $M>0$ such that, for $|t|<\delta$:

(1) $h_t(v)=h_K(v)+t f(v)$ is positive for all $v \in \mathbb{S}^{n-1}$,

(2) $\left|\rho_{\left[h_t\right]}(v)-\rho_K(v)\right|<M|t|$ for almost all $v \in \mathbb{S}^{n-1}$.

Additionally, one has, for almost all $v \in \mathbb{S}^{n-1}$ up to a set of spherical Lebesgue measure zero, that
$$
\left.\frac{\mathrm{d} \rho_{\left[h_t\right]}(v)}{\mathrm{d} t}\right|_{t=0}=\lim _{t \rightarrow 0} \frac{\rho_{\left[h_t\right]}(v)-\rho_K(v)}{t}=\frac{f\left(\mathbf{g}_K\left(r_K(v)\right)\right)}{h_K\left(\mathbf{g}_K\left(r_K(v)\right)\right)} \rho_K(v).
$$
\end{lemma}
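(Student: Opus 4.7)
The strategy is to use the Wulff-shape representation of the radial function,
$$\rho_{[h_t]}(v)=\inf_{u\in\mathbb{S}^{n-1},\ \langle v,u\rangle>0}\frac{h_t(u)}{\langle v,u\rangle},$$
which follows directly from the definition of $[h_t]$, and then carry out an envelope-type calculation. Assertion (1) is immediate: since $K\in\mathscr{K}_o^n$ contains the origin in its interior, $h_K\ge c>0$ on the compact sphere, and the boundedness of $f$ forces $h_t\ge c/2$ whenever $|t|<\delta:=c/(2\|f\|_\infty+1)$.

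For assertion (2), the inclusions $r\mathbb{B}^{n}\subset K\subset R\mathbb{B}^{n}$ give $r\le\rho_K\le R$, and at any $v$ where the outer normal $u^\ast=\mathbf{g}_K(r_K(v))$ is uniquely determined, the identity $\rho_K(v)\langle v,u^\ast\rangle=h_K(u^\ast)$ together with $h_K(u^\ast)\ge c$ yields $\langle v,u^\ast\rangle\ge c/R$. A parallel lower bound $\langle v,u_t\rangle\ge c/(2R)$ holds for any minimizer $u_t$ of $h_t(u)/\langle v,u\rangle$ once $|t|$ is sufficiently small, since $h_t\to h_K$ uniformly and $[h_t]\to K$ in the Hausdorff metric. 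Comparing the two Wulff infima on the compact region $\{u:\langle v,u\rangle\ge c/(2R)\}$ then gives
$$\bigl|\rho_{[h_t]}(v)-\rho_K(v)\bigr|\le\frac{2R\,\|f\|_\infty}{c}\,|t|,$$
which is the desired Lipschitz estimate with $M=2R\|f\|_\infty/c$.

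For the differentiability formula, fix $v$ in the full-measure subset of $\mathbb{S}^{n-1}$ on which $u^\ast=\mathbf{g}_K(r_K(v))$ is the \emph{unique} minimizer of $u\mapsto h_K(u)/\langle v,u\rangle$. Testing the competitor $u^\ast$ in the Wulff formula for $[h_t]$ produces
$$\rho_{[h_t]}(v)\le\frac{h_K(u^\ast)+tf(u^\ast)}{\langle v,u^\ast\rangle}=\rho_K(v)+t\,\frac{f(u^\ast)}{\langle v,u^\ast\rangle}.$$
Conversely, selecting a minimizer $u_t$ of $h_t/\langle v,\cdot\rangle$ and using $h_K(u_t)/\langle v,u_t\rangle\ge\rho_K(v)$ gives
$$\rho_{[h_t]}(v)=\frac{h_K(u_t)+tf(u_t)}{\langle v,u_t\rangle}\ge\rho_K(v)+t\,\frac{f(u_t)}{\langle v,u_t\rangle}.$$
The Lipschitz bound in (2) forces $\rho_{[h_t]}(v)\to\rho_K(v)$, so every subsequential limit $\tilde u$ of $\{u_t\}$ satisfies $h_K(\tilde u)/\langle v,\tilde u\rangle=\rho_K(v)$ and must equal $u^\ast$ by uniqueness; hence the full family $u_t\to u^\ast$. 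Dividing by $t$ and sending $t\to 0$ in both bounds therefore yields
$$\left.\frac{d\rho_{[h_t]}(v)}{dt}\right|_{t=0}=\frac{f(u^\ast)}{\langle v,u^\ast\rangle}=\frac{f(\mathbf{g}_K(r_K(v)))}{h_K(\mathbf{g}_K(r_K(v)))}\,\rho_K(v),$$
where the last equality substitutes $\langle v,u^\ast\rangle=h_K(u^\ast)/\rho_K(v)$.

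The only delicate point is the convergence $u_t\to u^\ast$, which rests entirely on the uniqueness of the minimizer of $u\mapsto h_K(u)/\langle v,u\rangle$; this uniqueness is equivalent to $r_K(v)$ being a regular boundary point of $K$, and fails only on a set of spherical Lebesgue measure zero, precisely accounting for the ``almost all $v$'' qualifier in the statement.
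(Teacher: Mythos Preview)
The paper does not supply its own proof of this lemma; it is quoted without argument as Lemma~2.6 of \cite{KLL2023}. Your proof is correct and follows the classical Aleksandrov envelope computation: represent $\rho_{[h_t]}(v)$ as an infimum of $h_t(u)/\langle v,u\rangle$, bound the difference quotient from one side by testing the fixed minimizer $u^\ast=\mathbf{g}_K(r_K(v))$ and from the other side by the running minimizer $u_t$, then close by showing $u_t\to u^\ast$ through compactness and the uniqueness of $u^\ast$ at regular boundary points. This is precisely the standard argument (cf.\ \cite[Lemma~4.3]{HLYZ2016} or \cite[Theorem~7.5.3]{SRA2014}) and is almost certainly the route taken in the cited reference as well.
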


The following concepts from metric theory are employed. Given points $x, y \in \mathbb{R}^n$ and a non-empty subset $A \subseteq \mathbb{R}^n$, the expression $|x - y|$ denotes the Euclidean distance between $x$ and $y$, while  
$$
\text{dist}(x, A) := \inf\{ |x - a| : a \in A \}
$$
represents the distance from the point $x$ to the set $A$.

The Hausdorff distance of the sets $K, L$ is nonempty compact subsets of $\mathbb{R}^n$ is defined by
$$
d_H(K, L):=\max \left\{\sup _{x \in K} \inf _{y \in L}|x-y|, \sup _{x \in L} \inf _{y \in K}|x-y|\right\}
$$
or, equivalently, by
\begin{align}\label{EQ:HDJL}
d_H(K, L)=\min \left\{\lambda \geq 0: K \subset L+\lambda \mathbb{B}^{n}, L \subset K+\lambda \mathbb{B}^{n}\right\}.
\end{align}
Then $d_H$ is the Hausdorff metric.

\subsection{Norms of $\mathbb{R}^n$}\label{subH}

Let $F: \mathbb{R}^n \mapsto[0,+\infty)$ be a convex function of class $C^2\left(\mathbb{R}^n \backslash\{0\}\right)$, such that

(i) $F$ is convex,

(ii) $F(\xi) \geq 0$ for $\xi \in \mathbb{R}^n$ and $F(\xi)=0$ if and only if $\xi=0$,

(iii) $F(t \xi)=|t| F(\xi)$ for $\xi \in \mathbb{R}^n$ and $t \in \mathbb{R}$.

Obviously, $F^2$ is strictly convex in $\mathbb{R}^n \backslash\{0\}$. A typical example is $F(\xi)=\left(\sum_{i=1}^n\left|\xi_i\right|^p\right)^{1 / p}$ for $p \in(1, \infty)$. For such a function $F$, there exist two constants $0<a \leq b<\infty$ such that
\begin{align}\label{Eq:Fyjd}
a|\xi| \leq F(\xi) \leq b|\xi|, \quad \text { for any } \xi \in \mathbb{R}^n.
\end{align}

More precisely we identify the dual space of $\mathbb{R}^n$ with $\mathbb{R}^n$ itself via the scalar product $\langle\cdot,\cdot\rangle$. Accordingly the space $\mathbb{R}^n$ turns out to be endowed with the dual norm $F_0$ given by
$$
F_0(x)=\sup _{\xi \neq 0} \frac{\langle x, \xi\rangle}{F(\xi)}, \quad \text { for any } x \in \mathbb{R}^n.
$$
On the other hand, we can define $F$ in terms of $F_0$ as
$$
F(\xi)=\sup _{x \neq 0} \frac{\langle x, \xi\rangle}{F_0(x)},  \quad \text { for any }  \xi \in \mathbb{R}^n.
$$

\begin{lemma}[Lemma 2.2 of \cite{JAR2014}]\label{lemhdy}
 If $1<p<\infty$ and $F(x)$ is an arbitrary norm in $\mathbb{R}^n$ which is of class $C^1$ for $x \neq 0$, then
$$
\left|F(\eta)^p-F(\xi)^p\right| \leqslant p[F(\xi)+F(\eta)]^{p-1} F(\eta-\xi),
$$
for any $\xi, \eta \in \mathbb{R}^n$.
\end{lemma}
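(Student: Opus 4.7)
The plan is to apply the mean value theorem to the one-variable function
$$g(t) \;=\; F\bigl(\xi + t(\eta-\xi)\bigr)^p, \qquad t \in [0,1].$$
Because $p>1$ and $F$ is $C^1$ away from the origin, the composition $F^p$ is $C^1$ on all of $\mathbb{R}^n$ (its gradient is continuous and vanishes at $0$), so $g \in C^1([0,1])$. Hence there exists $c \in (0,1)$ such that
$$F(\eta)^p - F(\xi)^p \;=\; g(1) - g(0) \;=\; g'(c) \;=\; p\, F(\zeta_c)^{p-1}\bigl\langle \nabla F(\zeta_c),\, \eta - \xi\bigr\rangle,$$
where $\zeta_c := (1-c)\xi + c\eta$. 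Taking absolute values reduces the lemma to bounding the two factors $F(\zeta_c)^{p-1}$ and $|\langle \nabla F(\zeta_c), \eta-\xi\rangle|$.

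For the inner-product factor I would use the duality structure recalled in Section~\ref{subH}. Since $F$ is positively $1$-homogeneous, its gradient $\nabla F$ is positively $0$-homogeneous, and Euler's identity $\langle \nabla F(\zeta), \zeta\rangle = F(\zeta)$ combined with the very definition of the dual norm forces $F_0(\nabla F(\zeta)) = 1$ for every $\zeta \neq 0$. The defining inequality $\langle x,\xi\rangle \leq F_0(x) F(\xi)$ then gives
$$\bigl|\bigl\langle \nabla F(\zeta_c),\, \eta - \xi\bigr\rangle\bigr| \;\leq\; F_0\bigl(\nabla F(\zeta_c)\bigr)\, F(\eta - \xi) \;=\; F(\eta - \xi).$$
For the remaining factor, convexity of the norm gives $F(\zeta_c) \leq (1-c)F(\xi) + c F(\eta) \leq F(\xi) + F(\eta)$, and since $p-1 \geq 0$ this bound is preserved after raising to the power $p-1$. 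Combining the two estimates with the MVT identity for $g'(c)$ delivers the claim.

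The only point that requires care is the degenerate possibility $\zeta_c = 0$, which can occur when $\xi$ and $\eta$ are anti-parallel; there $\nabla F$ is undefined. This is resolved by noting that $\nabla F^p$ is continuous with $\nabla F^p(0)=0$, so the chain rule still yields $g'(c) = 0$ at such a point and the desired inequality holds trivially. As an alternative route that sidesteps differentiability altogether, one may invoke the elementary scalar inequality $|a^p - b^p| \leq p(a+b)^{p-1}|a-b|$ for $a,b \geq 0$ (immediate from $a^p - b^p = \int_b^a p t^{p-1}\,dt$ and $t^{p-1} \leq (a+b)^{p-1}$), apply it with $a = F(\eta)$, $b = F(\xi)$, and then use the reverse triangle inequality $|F(\eta) - F(\xi)| \leq F(\eta - \xi)$; I expect this backup argument to be the simplest way to finish if the degenerate-point bookkeeping in the gradient approach becomes awkward.
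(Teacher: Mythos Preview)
The paper does not supply a proof of this lemma; it merely quotes it from \cite{JAR2014}. There is therefore nothing in the paper to compare against.

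Your argument is correct. Both routes you describe work, and in fact your ``backup'' argument is the cleaner one: the scalar inequality $|a^p-b^p|\le p(a+b)^{p-1}|a-b|$ together with the reverse triangle inequality $|F(\eta)-F(\xi)|\le F(\eta-\xi)$ finishes the proof in two lines and never touches differentiability of $F$, so the $C^1$ hypothesis is not even needed. In the mean-value-theorem route, the only point to tighten is the claim $F_0(\nabla F(\zeta))=1$: Euler's identity gives $\ge 1$, but the inequality $\le 1$ requires convexity of $F$ (the subgradient inequality $F(\xi)\ge F(\zeta)+\langle\nabla F(\zeta),\xi-\zeta\rangle$ combined with Euler yields $\langle\nabla F(\zeta),\xi\rangle\le F(\xi)$ for all $\xi$). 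You allude to this but do not spell it out; once that is made explicit, the first argument is complete as well.
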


\section{Anisotropic $p$-torsional rigidity}\label{SEC3}

\subsection{Anisotropic $p$-torsional rigidity}

In 2014, Pietra and Gavitone \cite{PDG2014} considered the following problem: Let $F^p$ be strictly convex, $K$ be bounded open set of $\mathbb{R}^n$, $1<p<\infty$ and $n \geq 2$,
\begin{align}\label{Eq:patr}
\left\{\begin{array}{lll}
\Delta_p^F u=-1 & \text { in } & K, \\
u=0 & \text { on } & \partial K,
\end{array}\right.
\end{align}
where $\Delta_p^F$ is the Finsler $p$-Laplacian (or anisotropic $p$-Laplacian) operator which is given by
\begin{align}\label{Eq:fsplls}
\Delta_p^F u=\operatorname{div}\left(F^{p-1}(\nabla u) \nabla_{\xi} F(\nabla u)\right),
\end{align}
in the sense of distributions; more precisely, (\ref{Eq:patr}) reads as
$$
\int_{K} F^{p-1}(\nabla u)\left\langle\nabla_{\xi} F( \nabla u), \nabla \phi\right\rangle d x=\int_{K} \phi d x,
$$
for any $\phi \in C_0^1(K)$.

Pietra and Gavitone \cite{PDG2014} obtained the solution is unique in (\ref{Eq:patr}). Meanwhile, Pietra and Gavitone \cite{PDG2014} defined the anisotropic $p$-torsional rigidity of $K$ (the number $\tau_{F,p}(K)>0$) such that
\begin{align}\label{Eq:FPTR}
\tau_{F,p}(K)=\int_{K} F^p\left(\nabla u\right) d x=\int_{K} u d x,
\end{align}
where $u \in W_0^{1, p}(K)$ is the unique solution of (\ref{Eq:patr}).

By (\ref{Eq:fsplls}), the Finsler or anisotropic $p$-Laplacian of a function $u$ of class $C^2$ can be written as
\begin{align}\label{Eq:Flplsz}
\Delta_p^F u=F^{p-2}(\nabla u)\left[(p-1) F_{\xi_i}(\nabla u)F_{\xi_j}(\nabla u)+F(\nabla u) F_{\xi_i \xi_j}(\nabla u)\right] u_{i j},
\end{align}

In addition, Pietra and Gavitone \cite{PDG2014} characterized the anisotropic $p$-torsional rigidity, i.e., $\tau_{F,p}(K)=\sigma(K)^{\frac{1}{p-1}}$, where $\sigma(K)$ is the best constant in the Sobolev inequality $\|u\|_{L^1(K)}^p \leq \sigma(K)\|F(\nabla u)\|_{L^p(K)}^p$, that is
\begin{align}\label{Eq:patm}
\tau_{F,p}(K)=\max _{\substack{\psi \in W_0^{1, p}(K) \backslash\{0\} \\ \psi \geq 0}} \bigg(\frac{\left(\int_{K} \psi d x\right)^p}{\int_{K} F(\nabla \psi)^p d x}\bigg)^{\frac{1}{p-1}},
\end{align}
and the solution $u$ of (\ref{Eq:patr}) realizes the maximum in (\ref{Eq:patm}).

At the same time, Pietra and Gavitone \cite[Remark 2.1 and Remark 5.2]{PDG2014} used the anisotropic P\'{o}lya-Szeg\"{o} inequality characterized the upper bound and lower bound estimate of the anisotropic $p$-torsional rigidity.
\begin{lemma}[\cite{PDG2014}]\label{lemuppb} 
Let $1<p<+\infty$ and $K$ be a bounded open set of $\mathbb{R}^n$. Then
\begin{align}\label{Eq:sjgj}
\tau_{F,p}(K) \leq\frac{p-1}{n(p-1)+p}n^{-\frac{1}{p-1}}\kappa_n^{-\frac{p}{n(p-1)}}|K|^{\frac{n(p-1)+p}{n(p-1)}},
\end{align}
where $\kappa_n=|\mathcal{W}|$ and $|\mathcal{W}|$ denotes the Lebesgue measure of $\mathcal{W}$. The set
$$
\mathcal{W}=\left\{\xi \in \mathbb{R}^n: H^o(\xi)<1\right\},
$$
is the so-called Wulff shape centered at the origin. 
\end{lemma}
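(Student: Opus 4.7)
The plan is to combine the variational characterisation (\ref{Eq:patm}) with an anisotropic symmetrisation argument. Given $K$, let $K^\star=r\mathcal{W}$ denote the Wulff shape with $|K^\star|=|K|$, i.e.\ with $r=(|K|/\kappa_n)^{1/n}$, and for any admissible $\psi\in W_0^{1,p}(K)\setminus\{0\}$ with $\psi\geq 0$ define the \emph{convex rearrangement} $\psi^\star$ to be the function on $K^\star$ whose super-level sets are dilates of $\mathcal{W}$ equimeasurable with those of $\psi$. Equimeasurability yields $\int_{K^\star}\psi^\star\,dx=\int_K\psi\,dx$, while the anisotropic P\'{o}lya--Szeg\"{o} inequality gives $\int_{K^\star}F(\nabla\psi^\star)^p\,dx\leq\int_K F(\nabla\psi)^p\,dx$. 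Choosing $\psi$ to be the solution of (\ref{Eq:patr}) on $K$ and testing the supremum in (\ref{Eq:patm}) for $K^\star$ against $\psi^\star$ thus yields $\tau_{F,p}(K)\leq\tau_{F,p}(K^\star)$, so the problem is reduced to computing $\tau_{F,p}$ explicitly on the Wulff shape.

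For $K^\star=r\mathcal{W}$, uniqueness and the $F^o$-symmetry of the problem force the solution of (\ref{Eq:patr}) to have the form $u(x)=g(F^o(x))$ for some decreasing profile $g$ with $g(r)=0$. Using the duality identities $F(\nabla F^o)\equiv 1$ and $\nabla_\xi F(\nabla F^o(x))=x/F^o(x)$, together with the Euler relation $\langle\nabla F^o(x),x\rangle=F^o(x)$, one computes
\[
F^{p-1}(\nabla u)\,\nabla_\xi F(\nabla u)=-(-g'(t))^{p-1}\,\frac{x}{t},\qquad t:=F^o(x),
\]
and taking the divergence reduces $\Delta_p^F u=-1$ to the linear first-order ODE
\[
v'(t)+\frac{n-1}{t}v(t)=-1,\qquad v(t):=-(-g'(t))^{p-1}.
\]
Smoothness at the origin selects the solution $v(t)=-t/n$; integrating $-g'(t)=(t/n)^{1/(p-1)}$ against $g(r)=0$ gives
\[
g(t)=\frac{p-1}{p}\,n^{-1/(p-1)}\bigl(r^{p/(p-1)}-t^{p/(p-1)}\bigr).
\]

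Finally, the anisotropic coarea identity $|\{F^o<t\}|=\kappa_n t^n$ produces the polar-type formula
\[
\tau_{F,p}(r\mathcal{W})=\int_{r\mathcal{W}}g(F^o(x))\,dx=n\kappa_n\int_0^r g(t)\,t^{n-1}\,dt,
\]
whose evaluation is an elementary Beta-type integral yielding $\frac{p-1}{n(p-1)+p}\,n^{-1/(p-1)}\kappa_n\,r^{n+p/(p-1)}$. Substituting $r=(|K|/\kappa_n)^{1/n}$ collects the $\kappa_n$-powers and reproduces the right-hand side of (\ref{Eq:sjgj}). The main obstacle, hidden in the first paragraph, is the anisotropic P\'{o}lya--Szeg\"{o} inequality itself: its justification in the required Sobolev class rests on the anisotropic (Wulff) isoperimetric inequality together with a careful coarea argument, and is the only non-elementary ingredient in the scheme.
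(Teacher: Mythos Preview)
Your proposal is correct and follows the same route the paper attributes to \cite{PDG2014}: the paper does not give its own proof of this lemma but explicitly records that Pietra and Gavitone obtained the upper bound via the anisotropic P\'{o}lya--Szeg\"{o} inequality, which is precisely your reduction to the Wulff shape followed by the explicit radial computation. Your ODE derivation and the evaluation of $\tau_{F,p}(r\mathcal{W})$ are accurate and recover the stated constant.
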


\begin{lemma}[\cite{PDG2014}]\label{lemxjgj}
Let $1<p<+\infty$ and $K$ be a bounded convex open set of $\mathbb{R}^n$. Then
\begin{align}\label{Eq:xjgj}
\tau_{F,p}(K)\geq \frac{p-1}{2 p-1} \frac{|K|^{\frac{2 p-1}{p-1}}}{C P(K)^{\frac{p}{p-1}}},
\end{align}
where $C$ is a positive constant and $P(K)$ is the perimeter of $K$.
\end{lemma}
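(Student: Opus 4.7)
The plan is to apply the variational principle (\ref{Eq:patm}) with an explicit test function modelled on the one-dimensional exact solution of the $p$-torsion problem. On an interval $[0,L]$ (with $F$ the absolute value) the solution of $\Delta_p u=-1$ is, up to a multiplicative constant, $(L/2)^{p/(p-1)}-|L/2-x|^{p/(p-1)}$, i.e.\ a power $p/(p-1)$ of the distance to the boundary. This suggests choosing
\[
\psi(x) := d(x)^{p/(p-1)},\qquad d(x):=\mathrm{dist}(x,\partial K),
\]
as a competitor in (\ref{Eq:patm}). The crucial feature of this exponent is that, since $|\nabla d|=1$ almost everywhere and $F(\xi)\leq b|\xi|$ by (\ref{Eq:Fyjd}), one finds
\[
\int_K F(\nabla\psi)^{p}\,dx \;\leq\; b^{p}\Bigl(\tfrac{p}{p-1}\Bigr)^{p}\int_K d(x)^{p/(p-1)}\,dx,
\]
so the denominator is controlled by the same integral $\int_K d^{p/(p-1)}\,dx$ that appears in the numerator $\int_K\psi\,dx$. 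Substituting into (\ref{Eq:patm}) and taking the $(p-1)$-th root reduces everything to a lower bound for $\int_K d(x)^{p/(p-1)}\,dx$ in terms of $|K|$ and $P(K)$.

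For this geometric bound I would work with the inner parallel bodies $K_t:=\{x\in K:d(x,\partial K)\geq t\}$ and the coarea / layer-cake identity
\[
\int_K d(x)^{q}\,dx \;=\; q\int_0^{R}t^{q-1}|K_t|\,dt,
\]
where $R$ is the inradius of $K$. For convex $K$ the perimeter is monotone along the family $K_t$, namely $P(K_t)\leq P(K)$, so coarea applied with $|\nabla d|=1$ gives $|K\setminus K_t|=\int_0^{t}P(K_s)\,ds\leq tP(K)$; this yields the affine lower bound $|K_t|\geq |K|-tP(K)$, valid on the interval $[0,T]$ with $T:=|K|/P(K)$. Inserting this bound into the layer-cake identity, restricting integration to $[0,T]$, and evaluating the resulting polynomial in $T$ (using the relation $PT=|K|$) produces the sharp estimate
\[
\int_K d(x)^{q}\,dx \;\geq\; \frac{1}{q+1}\cdot\frac{|K|^{q+1}}{P(K)^{q}},
\]
which at $q=p/(p-1)$ yields exactly the factor $(p-1)/(2p-1)$ and the exponents prescribed by (\ref{Eq:xjgj}).

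Combining the two steps gives (\ref{Eq:xjgj}) with the explicit constant $C=(bp/(p-1))^{p/(p-1)}$, depending only on $p$ and on the anisotropy constant $b$ from (\ref{Eq:Fyjd}). The only genuinely nontrivial input is the perimeter monotonicity $P(K_t)\leq P(K)$ for nested convex bodies, a classical consequence of the $1$-Lipschitz character of the nearest-point projection onto a convex set; this is precisely what pins down the sharp prefactor $(p-1)/(2p-1)$ via the polynomial integral. Everything else is careful bookkeeping of exponents dictated by the variational principle, and the whole cleverness sits in the choice of the exponent $p/(p-1)$ for the test function, which matches the one-dimensional torsion profile and makes both sides of the variational ratio collapse onto the same geometric integral $\int_K d^{p/(p-1)}\,dx$.
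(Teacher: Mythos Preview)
The paper does not supply its own proof of this lemma: it is quoted verbatim from \cite{PDG2014} (see the bracketed citation in the lemma header), so there is no ``paper's proof'' to compare against directly. That said, your argument is correct and is precisely the classical P\'olya-type approach that \cite{PDG2014} uses in its Remark~5.2: insert a power of the distance function into the variational characterization~(\ref{Eq:patm}), then bound the resulting integral $\int_K d^{p/(p-1)}$ via the coarea/layer-cake decomposition and the perimeter monotonicity $P(K_t)\le P(K)$ for the inner parallel bodies of a convex set. Your exponent bookkeeping is right, and the identification $1/(q+1)=(p-1)/(2p-1)$ at $q=p/(p-1)$ is exactly what produces the stated prefactor.

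One small remark on the comparison: in the anisotropic setting the cited reference actually works with the \emph{anisotropic} distance $d_{F}(x)=\inf_{y\in\partial K}F_0(x-y)$, which satisfies $F(\nabla d_{F})=1$ a.e.; this eliminates the factor $b$ from~(\ref{Eq:Fyjd}) but replaces the Euclidean perimeter by the anisotropic perimeter $P_F(K)$. Since $P_F(K)$ and $P(K)$ are comparable by~(\ref{Eq:Fyjd}), the two versions are equivalent up to the unspecified constant $C$, and your Euclidean-distance route is a perfectly legitimate way to reach the inequality as stated here.
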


Bianchin and Ciraolo \cite{BCG2018} proved the following Poho\v{z}aev identity.

\begin{lemma}[Anisotropic Poho\v{z}aev identity] Let $1<p<\infty$ and $u$ be the solution to (\ref{Eq:patr}), then
\begin{align}\label{Eq:API}
(n(p-1)+p) \int_{K} u(x) d x=(p-1) \int_{\partial K} F^p(\nabla u(x))\langle x, \mathbf{g}_K(x)\rangle d \mathscr{H}^{n-1}(x).
\end{align}
\end{lemma}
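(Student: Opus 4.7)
The plan is to imitate the classical Poho\v{z}aev multiplier argument, testing the equation $\Delta_p^F u = -1$ against the radial vector field $\langle x,\nabla u\rangle$ and integrating over $K$. The right-hand side is immediate: by $u=0$ on $\partial K$ together with $\operatorname{div}(x)=n$, a single integration by parts gives
\[
-\int_K \langle x,\nabla u\rangle\,dx = -\int_{\partial K} u\,\langle x,\mathbf{g}_K\rangle\,d\mathscr{H}^{n-1} + n\int_K u\,dx = n\int_K u\,dx.
\]

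For the left-hand side I would write $\Delta_p^F u = \operatorname{div}\!\bigl(F^{p-1}(\nabla u)\nabla_\xi F(\nabla u)\bigr)$ and integrate by parts once. On $\partial K$ the Dirichlet condition forces $\nabla u = -|\nabla u|\,\mathbf{g}_K$, and combined with Euler's identity $\langle \xi,\nabla_\xi F(\xi)\rangle = F(\xi)$ for the $1$-homogeneous $F$, this gives $\langle \nabla_\xi F(\nabla u),\mathbf{g}_K\rangle = -F(\nabla u)/|\nabla u|$, so after the signs cancel the boundary term collapses to $\int_{\partial K} F^p(\nabla u)\langle x,\mathbf{g}_K\rangle\,d\mathscr{H}^{n-1}$. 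The remaining interior term splits via $\nabla\langle x,\nabla u\rangle = \nabla u + (D^2 u)x$: the first piece produces $\int_K F^p(\nabla u)\,dx$ by Euler again, while the second piece is recognized as $\tfrac{1}{p}\langle x,\nabla F^p(\nabla u)\rangle$ through the chain rule $\partial_j F^p(\nabla u) = p\,F^{p-1}(\nabla u)\,\partial_{\xi_i}F(\nabla u)\,u_{ij}$; a second integration by parts then converts it into $\tfrac{1}{p}\int_{\partial K}F^p(\nabla u)\langle x,\mathbf{g}_K\rangle d\mathscr{H}^{n-1} - \tfrac{n}{p}\int_K F^p(\nabla u)\,dx$.

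Assembling everything, the left-hand side equals $\tfrac{p-1}{p}\int_{\partial K}F^p(\nabla u)\langle x,\mathbf{g}_K\rangle d\mathscr{H}^{n-1} + \tfrac{n-p}{p}\int_K F^p(\nabla u)\,dx$. Substituting $\int_K F^p(\nabla u)\,dx = \int_K u\,dx$ from \eqref{Eq:FPTR}, equating with $n\int_K u\,dx$, and clearing the factor $p$ produces the claimed identity. The main obstacle is regularity: solutions of the anisotropic $p$-Laplacian are in general only $C^{1,\alpha}$, so the appearance of $D^2 u$ above cannot be justified pointwise on an arbitrary bounded open $K$. I would therefore first prove the identity on domains of class $C^{2,\alpha}$, where $u$ is $C^2$ on the set $\{\nabla u \neq 0\}$ and a Finsler-type Hopf lemma guarantees $|\nabla u|>0$ in a tubular neighbourhood of $\partial K$, and then pass to a general bounded open $K$ by exhausting with smooth domains and exploiting the continuity of both sides in the Hausdorff metric.
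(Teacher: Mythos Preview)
Your proposal is the standard Poho\v{z}aev multiplier argument and the computation is correct: testing $\Delta_p^F u=-1$ against $\langle x,\nabla u\rangle$, using Euler's identity $\langle\xi,\nabla_\xi F(\xi)\rangle=F(\xi)$ for the $1$-homogeneous $F$, and recognising $F^{p-1}(\nabla u)\,F_{\xi_i}(\nabla u)\,u_{ij}x_j=\tfrac1p\langle x,\nabla F^p(\nabla u)\rangle$ leads exactly to the stated identity after invoking \eqref{Eq:FPTR}. Your treatment of the regularity issue---proving the identity first on $C^{2,\alpha}$ domains where Lemma~\ref{lemia} supplies enough smoothness near $\partial K$, then approximating---is the right way to handle it.

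There is, however, nothing to compare against: the paper does \emph{not} supply its own proof of this lemma. It simply attributes the anisotropic Poho\v{z}aev identity to Bianchin and Ciraolo \cite{BCG2018} and states it without argument. So your proposal goes beyond what the paper does here. The argument you outline is essentially the one in \cite{BCG2018}, where the identity is derived under the hypotheses $\partial K\in C^{2,\alpha}$ and $F\in\mathscr{I}_p$; you may want to state those hypotheses explicitly rather than aiming for a general bounded open set, since the approximation step you sketch at the end (continuity of both sides in the Hausdorff metric for arbitrary bounded open sets) is not established in the paper and would require additional work.
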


\begin{proposition} Let $1<p<\infty$, $K$ be a convex body with boundary of class $C^2$ such that the Gauss curvature is positive at every point of their boundary and $u$ be the solution of problem (\ref{Eq:patr}) in $K$. The following formula holds
\begin{gather}\label{Eq:AFNZg}
\tau_{F,p}(K)=\frac{p-1}{n(p-1)+p} \int_{\mathbb{S}^{n-1}} h_K(\xi) F^p(\nabla u(\mathbf{g}^{-1}_K(\xi)))d S(K,\xi).
\end{gather}
\end{proposition}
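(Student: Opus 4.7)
The plan is to combine the Anisotropic Pohožaev identity (\ref{Eq:API}) with the definition $\tau_{F,p}(K)=\int_K u\, dx$ from (\ref{Eq:FPTR}), and then rewrite the resulting boundary integral over $\partial K$ as an integral over $\mathbb{S}^{n-1}$ using the Gauss map.

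First, I would substitute $\tau_{F,p}(K)=\int_K u\, dx$ into the left-hand side of (\ref{Eq:API}) and divide by $n(p-1)+p$, obtaining
\[
\tau_{F,p}(K)=\frac{p-1}{n(p-1)+p}\int_{\partial K} F^p(\nabla u(x))\langle x,\mathbf{g}_K(x)\rangle\, d\mathscr{H}^{n-1}(x).
\]
The next key step uses the basic support-function identity: for $K\in\mathscr{K}_o^n$ with $C^2$ boundary of positive Gauss curvature, at every point $x\in\partial K$ with outer unit normal $\nu=\mathbf{g}_K(x)$ the point $x$ realizes the maximum in (\ref{SF}) for the direction $\nu$, so $\langle x,\mathbf{g}_K(x)\rangle=h_K(\mathbf{g}_K(x))$. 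Under these regularity assumptions $\mathbf{g}_K:\partial K\to\mathbb{S}^{n-1}$ is a diffeomorphism, so $\mathbf{g}_K^{-1}$ is well-defined on all of $\mathbb{S}^{n-1}$ and $\nabla u(x)=\nabla u(\mathbf{g}_K^{-1}(\xi))$ when $\xi=\mathbf{g}_K(x)$.

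Finally, I would apply the change-of-variables formula (\ref{Eq:gdfs}) with $f(\xi)=h_K(\xi)F^p(\nabla u(\mathbf{g}_K^{-1}(\xi)))$. This converts the boundary integral into
\[
\int_{\mathbb{S}^{n-1}} h_K(\xi)F^p(\nabla u(\mathbf{g}_K^{-1}(\xi)))\, dS(K,\xi),
\]
which yields the stated identity. The only nontrivial point is verifying that $\nabla u$ is continuous up to $\partial K$ so that the integrand on the right-hand side is a genuine function on $\mathbb{S}^{n-1}$; this follows from standard elliptic regularity for the Finsler $p$-Laplace equation (\ref{Eq:patr}) given the assumed $C^2$ regularity of $\partial K$ and the strict convexity of $F^p$, so no additional ingredient is needed beyond the two identities already in hand.
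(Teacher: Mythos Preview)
Your proposal is correct and follows essentially the same route as the paper's own proof: combine the definition $\tau_{F,p}(K)=\int_K u\,dx$ from (\ref{Eq:FPTR}) with the Anisotropic Poho\v{z}aev identity (\ref{Eq:API}), then push the boundary integral forward to $\mathbb{S}^{n-1}$ via (\ref{Eq:gdfs}) using $\langle x,\mathbf{g}_K(x)\rangle=h_K(\mathbf{g}_K(x))$. The paper states this in a single sentence; your added remarks about the support-function identity and the continuity of $\nabla u$ up to $\partial K$ simply make explicit the implicit steps.
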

\begin{proof}
Since $u$ is the solution of problem (\ref{Eq:patr}) in $K$, from (\ref{Eq:FPTR}), Anisotropic Poho\v{z}aev identity (\ref{Eq:API}) and (\ref{Eq:gdfs}), we can easily deduce that formula (\ref{Eq:AFNZg}) holds.
\end{proof}

\begin{remark}
If $p=2$ and $F(\xi)=\sum_k\left|\xi_k\right|$ in (\ref{Eq:patr}) and (\ref{Eq:AFNZg}), we can obtain the classical torsional rigidity defined by Colesanti (see \cite{COA2005}) as a special case.
\end{remark}

If $1<p<\infty$ and $K$ is a convex body in $\mathbb{R}^n$ that contains the origin in its interior, then the anisotropic $p$-torsional measure, $S_{F,p}(K,\cdot)$, of $K$ is a Borel measure on the unit sphere $\mathbb{S}^{n-1}$ defined for a Borel set $\eta\subset \mathbb{S}^{n-1}$ by
\begin{align}\label{Eq:AMBJCD}
S_{F,p}(K,\eta)=\int_{x\in\mathbf{g}_K^{-1}(\eta)}F^p\left(\nabla u(x)\right) d\mathscr{H}^{n-1}(x),
\end{align}
where $u \in W_0^{1, p}(K) \backslash\{0\}$ and $\mathbf{g}_K: \partial^{\prime} K \rightarrow \mathbb{S}^{n-1}$ is the Gauss map of $K$, defined on $\partial^{\prime} K$, the set of points of $\partial K$ that have a unique outer unit normal. Obviously, the anisotropic $p$-torsional measure, $S_{F,p}(K,\cdot)$, of $K$ is absolutely continuous with respect to the surface area measure $S(K,\cdot)$ of $K$. 

From (\ref{Eq:AFNZg}), if $1<p<\infty$ and $K$ is a convex body in $\mathbb{R}^n$ that contains the origin in its interior, then the cone anisotropic $p$-torsional measure, $\tau_{F,p}^{\rm{log}}(K,\cdot)$, of $K$ is a Borel measure on the unit sphere $\mathbb{S}^{n-1}$ defined for a Borel set $\eta\subset \mathbb{S}^{n-1}$ by
\begin{gather}\label{Eq:logcd}
\tau_{F,p}^{\rm{log}}(K,\eta)=\frac{p-1}{n(p-1)+p}\int_{x\in\mathbf{g}_K^{-1}(\eta)} F^p\left(\nabla u(x)\right) \langle x, \mathbf{g}_K(x)\rangle d\mathscr{H}^{n-1}(x).
\end{gather}

\subsection{Properties of anisotropic $p$-torsional rigidity}

Next we will give some properties of anisotropic $p$-torsional rigidity $\tau_{F,p}$.

\begin{proposition}\label{prop:apxz}
Suppose $u$ is the solution of (\ref{Eq:patr}), $1<p<\infty$ and $K, L \in \mathscr{K}^n$. 
\begin{itemize}
  \item[\bf(a)] The anisotropic $p$-torsional rigidity $\tau_{F,p}(K)$ is positively homogeneous of order $\frac{p}{p-1}+n$, i.e., 
$$\tau_{F,p}(\lambda K)=\lambda^{\frac{p}{p-1}+n}\tau_{F,p}(K).$$

  \item[\bf(b)] The anisotropic $p$-torsional rigidity $\tau_{F,p}(K)$ is translation invariant, i.e., $\tau_{F,p}(K+x)=\tau_{F,p}(K)$ for $K \in \mathscr{K}^n$, $x \in \mathbb{R}^n$.

  \item[\bf(c)] The anisotropic $p$-torsional measure $S_{F,p}(K,\cdot)$ is translation invariant.

  \item[\bf(d)] If $K, L \in \mathscr{K}^n$ and $K \subseteq L$, then $\tau_{F,p}(K) \leq \tau_{F,p}(L)$.
\end{itemize}
\end{proposition}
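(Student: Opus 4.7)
The plan is to derive each of the four properties from the PDE characterization (\ref{Eq:patr}) and the variational characterization (\ref{Eq:patm}) of $\tau_{F,p}$ via direct scaling, translation, or monotonicity arguments. For \textbf{(a)}, I would construct the solution on $\lambda K$ explicitly from $u$. Setting $v(x) := \lambda^{p/(p-1)}\, u(x/\lambda)$ on $\lambda K$, the $1$-homogeneity of $F$ and the consequent $0$-homogeneity of $\nabla_{\xi} F$, together with the extra $\lambda^{-1}$ picked up by the outer divergence, yield $\Delta_p^F v(x) = \lambda^{(\frac{p}{p-1}-1)(p-1)-1}\,\Delta_p^F u(x/\lambda) = -1$; since $v = 0$ on $\partial(\lambda K)$, the uniqueness statement for (\ref{Eq:patr}) recalled from \cite{PDG2014} forces $v$ to be \emph{the} solution on $\lambda K$. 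The change of variables $y = x/\lambda$ in $\tau_{F,p}(\lambda K) = \int_{\lambda K} v(x)\, dx$ then produces the claimed factor $\lambda^{p/(p-1)+n}$.

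For \textbf{(b)}, because $\Delta_p^F$ depends only on $\nabla u$, the function $v(x):=u(x-x_0)$ solves (\ref{Eq:patr}) on $K+x_0$ whenever $u$ solves it on $K$, and a change of variables in (\ref{Eq:FPTR}) gives $\tau_{F,p}(K+x_0)=\tau_{F,p}(K)$. For \textbf{(c)}, the same translated solution $v$ combined with the Gauss-map identity $\mathbf{g}_{K+x_0}^{-1}(\eta)=x_0+\mathbf{g}_K^{-1}(\eta)$ and $\nabla v(x)=\nabla u(x-x_0)$ yields, after the change of variables $y=x-x_0$ in (\ref{Eq:AMBJCD}), that $S_{F,p}(K+x_0,\eta)=S_{F,p}(K,\eta)$.

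For \textbf{(d)}, I would invoke the variational characterization (\ref{Eq:patm}). Given $K\subseteq L$, any admissible $\psi\in W_0^{1,p}(K)\setminus\{0\}$ with $\psi\geq 0$ extends by zero to an admissible $\tilde\psi\in W_0^{1,p}(L)\setminus\{0\}$ with $\tilde\psi\geq 0$, while $\int_L\tilde\psi\,dx=\int_K\psi\,dx$ and $\int_L F(\nabla\tilde\psi)^p\,dx=\int_K F(\nabla\psi)^p\,dx$. Consequently the supremum in (\ref{Eq:patm}) taken over $W_0^{1,p}(L)$ dominates that over $W_0^{1,p}(K)$, which gives $\tau_{F,p}(K)\leq \tau_{F,p}(L)$.

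The only step requiring careful bookkeeping is \textbf{(a)}: tracking how the prefactor $\lambda^{p/(p-1)}$ propagates through $F^{p-1}(\nabla v)\,\nabla_{\xi} F(\nabla v)$ using $1$- and $0$-homogeneity respectively, and verifying that it combines correctly with the extra $\lambda^{-1}$ from the divergence to restore the right-hand side $-1$. Parts \textbf{(b)}--\textbf{(d)} are routine once translation invariance of the operator and the variational formula are in hand.
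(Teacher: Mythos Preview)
Your proposal is correct and follows essentially the same approach as the paper for parts \textbf{(a)}, \textbf{(b)}, and \textbf{(d)}: the paper also builds the scaled solution $v(y)=\lambda^{p/(p-1)}u(y/\lambda)$ and the translated solution $v(\xi)=u(\xi-x)$, invokes uniqueness, and integrates; for \textbf{(d)} it likewise uses the variational characterization (\ref{Eq:patm}) together with the inclusion $W_0^{1,p}(K)\subseteq W_0^{1,p}(L)$.

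The one place you diverge is \textbf{(c)}. The paper simply asserts that the translation invariance of $\tau_{F,p}$ established in \textbf{(b)} ``implies'' $S_{F,p}(K+x,\cdot)=S_{F,p}(K,\cdot)$, without further explanation; this implication is not immediate at this point of the paper, since the variational formula (Lemma~\ref{lembfgsn}) identifying $S_{F,p}(K,\cdot)$ as the first variation of $\tau_{F,p}$ only appears later. Your direct argument---pushing the translated solution through the definition (\ref{Eq:AMBJCD}) via $\mathbf{g}_{K+x_0}^{-1}(\eta)=x_0+\mathbf{g}_K^{-1}(\eta)$ and the change of variables $y=x-x_0$---is self-contained and does not depend on results proved afterward, so it is in fact the cleaner route here.
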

\begin{proof}
{\bf(a)} If $u$ is the solution of (\ref{Eq:patr}) in $K$ and $\lambda > 0$, by (\ref{Eq:Flplsz}), then the function
\begin{align}\label{Eq:uv28qwe}
v(y)=\lambda^\frac{p}{p-1} u\left(\frac{y}{\lambda}\right), \quad y \in \lambda K,
\end{align}
is the corresponding solution in $\lambda K=\{y=\lambda x \mid x \in K\}$.

Let $u(y)$ be the unique solution of (\ref{Eq:patr}) in $K$, by (\ref{Eq:uv28qwe}) then the function $v(y)=\lambda^\frac{p}{p-1} u\left(\frac{y}{\lambda}\right)$, $y \in \lambda K$, is the unique solution of (\ref{Eq:patr}) in $\lambda K$. Thus, by (\ref{Eq:FPTR}), we deduce
\begin{align*}
\tau_{F,p}(\lambda K)=\int_{\lambda K}v(y)d y=\int_{\lambda K}\lambda^\frac{p}{p-1} u\left(\frac{y}{\lambda}\right)d y=\lambda^{\frac{p}{p-1}+n}\int_{K} u(x)d x=\lambda^{\frac{p}{p-1}+n}\tau_{F,p}(K).
\end{align*}

{\bf(b)} Support $v(\xi)=u(\xi-x)$, $x \in \mathbb{R}^n$, then $v$ is solution of (\ref{Eq:patr}) in $K+x$. From (\ref{Eq:FPTR}), since $u(y) \in W_0^{1, p}(K)$ is the unique solution of (\ref{Eq:patr}). Then from (\ref{Eq:FPTR}), we have
$$\tau_{F,p}(K+x)=\int_{K+x} v(\xi) d \xi=\int_{K} u(y) d y=\tau_{F,p}(K).$$ 
Thus, $\tau_{F,p}(K)$ is translation invariant.

{\bf(c)} From {\bf(b)} we see that $\tau_{F,p}(K+x)=\tau_{F,p}(K)$, for $x \in \mathbb{R}^n$, which implies that $S_{F,p}(K+x, \cdot)=S_{F,p}(K, \cdot)$, that is, the anisotropic $p$-torsional measure $S_{F,p}(K,\cdot)$ is translation invariant.

{\bf(d)} Since the solution $u$ of (\ref{Eq:patr}) realizes the maximum in (\ref{Eq:patm}). If $u$ is the solution of (\ref{Eq:patr}), $K, L \in \mathscr{K}^n$ and $K \subseteq L$. Suppose $f \in W_0^{1, p}\left(K\right)$, then $f=0$ in $\mathbb{R}^n \backslash K$. Obviously, $f \in W_0^{1, p}\left(L\right)$, since $f=0$ in $L \backslash K$. If $f \in W_0^{1, p}\left(L\right)$, then $f=0$ in $\mathbb{R}^n \backslash L$, but we don't know whether $f=0$ in $L \backslash K$, so we can't derive $f \in W_0^{1, p}\left(K\right)$ from $f \in W_0^{1, p}\left(L\right)$. Therefore, $W_0^{1, p}\left(K\right) \subseteq W_0^{1, p}\left(L\right)$ if $K \subseteq L$.
Then
$$
\begin{aligned}
\tau_{F,p}(K)&=\max _{\substack{u \in W_0^{1, p}(K) \backslash\{0\} \\ u \geq 0}} \bigg(\frac{\left(\int_{K} u d x\right)^p}{\int_{K} F(\nabla u)^p d x}\bigg)^{\frac{1}{p-1}}\\
&=\max _{\substack{u \in W_0^{1, p}(K) \backslash\{0\} \\ u \geq 0}} \bigg(\frac{\left(\int_{L} u d x\right)^p}{\int_{L} F(\nabla u)^p d x}\bigg)^{\frac{1}{p-1}}\\
&\leq\max _{\substack{u \in W_0^{1, p}(L) \backslash\{0\} \\ u \geq 0}} \bigg(\frac{\left(\int_{L} u d x\right)^p}{\int_{L} F(\nabla u)^p d x}\bigg)^{\frac{1}{p-1}}\\
& =\tau_{F,p}(L).
\end{aligned}
$$
Thus,
$$
\tau_{F,p}(K) \leq \tau_{F,p}(L).
$$
The proof is completed.
\end{proof}

\begin{remark}
It is evident that when $p=2$ and $F(\xi)=\sum_k\left|\xi_k\right|$ in Proposition \ref{prop:apxz}, these characteristics align with the classical properties of the classical torsional rigidity.
\end{remark}

Let $F$ be a norm in the class
$$
\mathscr{I}_p=\left\{F \in C^{2, \alpha}\left(\mathbb{R}^n \backslash\{0\}\right): \quad \frac{1}{p} F^p  \in C_{+}^2\left(\mathbb{R}^n \backslash\{0\}\right)\right\},
$$
with $p>1$ and for some $\alpha \in(0,1)$, that is a regular norm $F$, where $F^p$ is a twice continuously differentiable function in $\mathbb{R}^n \backslash\{0\}$ whose Hessian matrix is strictly positive definite.

The following lemma was established by Bianchin and Ciraolo (see \cite[Lemma 3.1]{BCG2018}  and its proof).
\begin{lemma}[\cite{BCG2018}]\label{lemia}
Let $K \subset \mathbb{R}^n$ be a bounded domain with the boundary of class $C^{2, \alpha}$ and let $F$ be a norm in $\mathcal{I}_p$ and $p>1$. There exists a unique solution $u$ to Problem (\ref{Eq:patr}) with $u \in C^{1, \alpha}(\bar{K})$. Meanwhile, if $\partial K$ is of class $C^{2, \alpha}$, then there exists $0<c<1$ such that 
\begin{align}\label{Eq:tdgij1}
c \leq|\nabla u| \leq \frac{1}{c} \text{ on }  \partial K.
\end{align}
\end{lemma}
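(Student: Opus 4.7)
The plan is to handle existence/uniqueness, $C^{1,\alpha}$ regularity, and the two-sided gradient bound separately, drawing on the variational characterization (\ref{Eq:patm}) and standard quasilinear elliptic theory. None of the three ingredients is individually novel; the lemma is essentially a synthesis of classical facts adapted to the anisotropic operator $\Delta_p^F$.

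For existence and uniqueness, I would work with the energy functional
$$J(v) = \frac{1}{p}\int_K F(\nabla v)^p\,dx - \int_K v\,dx$$
on $W_0^{1,p}(K)$. The norm comparison (\ref{Eq:Fyjd}) gives $F(\xi)^p \geq a^p|\xi|^p$, so combined with Poincar\'e's inequality the functional is coercive; the hypothesis $F \in \mathscr{I}_p$ forces $\frac{1}{p}F^p$ to be strictly convex, hence $J$ is strictly convex and admits a unique minimizer. A routine first-variation computation identifies this minimizer with the unique weak solution of (\ref{Eq:patr}). Alternatively, the variational characterization (\ref{Eq:patm}) itself provides existence once one shows the supremum is attained by standard direct-method arguments.

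For the regularity claim $u \in C^{1,\alpha}(\bar{K})$, I would rewrite the equation as $\operatorname{div}(A(\nabla u)) = -1$ with $A(\xi) = \nabla_\xi\bigl(\tfrac{1}{p}F^p(\xi)\bigr)$. Because $F \in \mathscr{I}_p$, the vector field $A$ is $C^{1,\alpha}$ away from the origin, and the structural growth conditions of a $p$-Laplacian-type operator are verified using (\ref{Eq:Fyjd}) (the anisotropic coefficients are controlled by Euclidean $|\nabla u|^{p-2}$-type terms from above and below). Interior $C^{1,\alpha}$ regularity then follows from the standard theorems of DiBenedetto and Tolksdorf, while boundary $C^{1,\alpha}$ regularity up to $\partial K$ (using $\partial K \in C^{2,\alpha}$ and the bounded right-hand side $-1$) follows from Lieberman's boundary regularity result for quasilinear degenerate equations. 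The upper bound $|\nabla u| \leq 1/c$ on $\partial K$ is then an immediate consequence of global $C^{1,\alpha}$ regularity.

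The main obstacle will be the strictly positive lower bound $c \leq |\nabla u|$ on $\partial K$, since the equation degenerates exactly where $\nabla u = 0$ and one must rule this out uniformly along the boundary. I would argue by a barrier construction that exploits the uniform interior sphere condition guaranteed by $\partial K \in C^{2,\alpha}$. Fix $x_0 \in \partial K$ and let $B_r(y_0) \subset K$ be an interior Euclidean ball tangent to $\partial K$ at $x_0$, with radius $r>0$ uniform in $x_0$. The strong maximum principle applied to (\ref{Eq:patr}) gives $u>0$ in $K$, so there is $m>0$ with $u \geq m$ on a slightly smaller concentric ball. I would then build an explicit subsolution $w$, radial with respect to the dual anisotropic distance $F_0(\cdot - y_0)$, on the annular region between the two spheres, satisfying $\Delta_p^F w \geq -1$, $w \leq u$ on the inner sphere, and $w = 0$ on $\partial B_r(y_0)$; the scaling (\ref{Eq:uv28qwe}) reduces the construction of $w$ to solving a one-dimensional ODE explicitly. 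Comparison then forces $w \leq u$ in the annulus, and because $w$ vanishes non-degenerately at $x_0$ with computable $|\nabla w(x_0)| > 0$, we obtain $|\nabla u(x_0)| \geq |\nabla w(x_0)| \geq c$. The delicate point is arranging that the same $r$, $m$, and shape of $w$ work at every boundary point, which requires the uniform interior sphere condition together with the norm equivalence (\ref{Eq:Fyjd}) to compare the anisotropic and Euclidean barriers.
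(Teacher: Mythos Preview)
The paper does not actually prove this lemma: it is quoted verbatim as \cite[Lemma 3.1]{BCG2018} and attributed to Bianchin and Ciraolo, with no argument reproduced. Your proposal is therefore not competing against any proof in the present paper; rather, you have sketched the standard machinery (direct method for existence/uniqueness, DiBenedetto--Tolksdorf interior and Lieberman boundary $C^{1,\alpha}$ regularity, and a Hopf-type barrier for the lower gradient bound) that one would expect to find in the cited source. The outline is sound and each step is well-motivated; in particular your identification of the lower bound $|\nabla u|\geq c$ as the only substantive point, and your plan to handle it via an anisotropic radial subsolution on an interior tangent ball, is exactly the right idea. Since the paper treats the lemma as an imported black box, there is nothing further to compare.
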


\begin{lemma}\label{lemfqjix}
Let $1<p<\infty$ and $K \subset \mathbb{R}^n$ be bounded domain with the boundary of class $C^{2, \alpha}$ and $F$ be a norm in $\mathscr{I}_p$.  Let $u$  be the solution to equation (\ref{Eq:patr}) on $K$.  Then for every point  $x \in \partial K$, the gradient $\nabla u$ has a finite non-tangential limit at $x$.
\end{lemma}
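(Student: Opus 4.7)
The plan is to deduce Lemma \ref{lemfqjix} as an immediate corollary of Lemma \ref{lemia}. Under the hypotheses (boundary $\partial K$ of class $C^{2,\alpha}$ and $F \in \mathscr{I}_p$), Lemma \ref{lemia} asserts that the unique solution $u$ to problem (\ref{Eq:patr}) belongs to $C^{1,\alpha}(\bar{K})$. In particular, the gradient $\nabla u$, originally defined only in the interior $K$, extends to an $\alpha$-Hölder continuous function on the closure $\bar{K}$.

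Concretely, fix $x \in \partial K$ and consider a standard non-tangential approach region with vertex at $x$, for instance $\Gamma_\beta(x) := \{y \in K : |y - x| < \beta\, \operatorname{dist}(y, \partial K)\}$ for some $\beta > 1$. For any sequence $\{y_k\} \subset \Gamma_\beta(x)$ with $y_k \to x$, the continuity of the extension provided by Lemma \ref{lemia} yields $\nabla u(y_k) \to \nabla u(x)$, where $\nabla u(x)$ denotes the boundary value of the continuous extension. Since this limit is independent of the chosen sequence (indeed it coincides with the full limit $\lim_{y \to x,\, y \in K} \nabla u(y)$), the non-tangential limit exists. The two-sided bound (\ref{Eq:tdgij1}) then guarantees $c \leq |\nabla u(x)| \leq 1/c$, so the limit is finite and bounded away from zero.

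The principal obstacle is essentially absent: the hard analytic work—establishing $C^{1,\alpha}$ regularity of $u$ up to a $C^{2,\alpha}$ boundary for the anisotropic $p$-Laplacian—is already encapsulated in Lemma \ref{lemia} and the Bianchin–Ciraolo result it quotes. The present lemma simply repackages that regularity in the non-tangential framework, which is convenient for subsequent arguments where one integrates $F^p(\nabla u)$ over $\partial K$ to define and manipulate the anisotropic $p$-torsional measure (\ref{Eq:AMBJCD}) and its logarithmic variant (\ref{Eq:logcd}).
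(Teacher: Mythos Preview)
Your proposal is correct and follows essentially the same approach as the paper: both invoke Lemma \ref{lemia} to obtain $u \in C^{1,\alpha}(\bar K)$, and then observe that the resulting H\"older continuity of $\nabla u$ on $\bar K$ immediately yields the existence of the non-tangential limit $\nabla u(x)$ at every boundary point. The paper writes out the $\varepsilon$--$\delta$ estimate $|\nabla u(y)-\nabla u(x)|\le C|y-x|^\alpha$ explicitly, but the content is identical to what you describe.
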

\begin{proof}
Since $K\subset \mathbb{R}^n$ is a bounded domain with the boundary of class $C^{2, \alpha}$, $F$ is a norm in $\mathscr{I}_p$ and $p>1$. By Lemma \ref{lemia}, there exists a unique solution $u$ to Problem (\ref{Eq:patr}) with $u \in C^{1, \alpha}(\bar{K})$. This implies that $\nabla u$ is uniformly continuous on $\bar{K}$. In particular, for any fixed $x \in \partial K$ and any $y \in \Gamma_\beta(x) \subset K$, given $\varepsilon > 0$, choose $\delta > 0$ and  constant $C$, such that $\delta = \left(\frac{\varepsilon}{C}\right)^{1/\alpha}$. Then for any $y \in \Gamma_\beta(x)$ with $0 < |y - x| < \delta$, we obtain
$$
|\nabla u(y) - \nabla u(x)| \leq C |y - x|^\alpha < C \cdot \delta^\alpha = \varepsilon. 
$$
Therefore,
$$
\lim_{\substack{y \to x \\ y \in \Gamma_\beta(x)}} \nabla u(y) = \nabla u(x).
$$
\end{proof}

\begin{lemma}\label{lemyzyj}
Let $1<p<\infty$ and $K,K_i \subset \mathbb{R}^n$ be bounded domain with the boundary of class $C^{2, \alpha}$ and $F$ be a norm in $\mathscr{I}_p$. If $K_i$ converges to $K$ in Hausdorff metric as $i \rightarrow \infty$.  Let $u$ and $u_i$ be the solutions to equation (\ref{Eq:patr}) on $K$ and $K_i$, respectively.
Then $u_i \to u$ uniformly in $C^1(\bar{K_i})$.
\end{lemma}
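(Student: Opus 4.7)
The plan is to establish uniform $C^{1,\beta}$ estimates for the $u_i$ up to the boundary, extract a convergent subsequence, and identify the limit as $u$ by uniqueness. Since $K_i\to K$ in Hausdorff metric, all $K_i$ lie in a fixed ball $B_R$. Extending $u_i$ by zero to $B_R$, Proposition~\ref{prop:apxz}(d) combined with Lemma~\ref{lemuppb} gives
\[
\int_{K_i} u_i\,dx \;=\; \tau_{F,p}(K_i)\;\leq\; \tau_{F,p}(B_R)
\]
uniformly in $i$, and the identity $\int_{K_i} F^p(\nabla u_i)\,dx=\int_{K_i} u_i\,dx$ from (\ref{Eq:FPTR}), together with the norm equivalence (\ref{Eq:Fyjd}), yields a uniform $W^{1,p}$ bound for the extended functions. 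A weak comparison principle, using the solution on $B_R$ as a barrier (justified through the variational characterization (\ref{Eq:patm})), produces a uniform $L^\infty$ bound as well.

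Next I would upgrade these to boundary $C^{1,\beta}$ bounds. Lemma~\ref{lemia} already provides the qualitative regularity $u_i\in C^{1,\alpha}(\overline{K_i})$ and the non-degeneracy $c\leq|\nabla u_i|\leq 1/c$ on $\partial K_i$; combined with Lieberman-type boundary estimates for the anisotropic $p$-Laplacian and the uniform energy and $L^\infty$ bounds above, these give $\sup_i \|u_i\|_{C^{1,\beta}(\overline{K_i})}<\infty$ for some $\beta\in(0,\alpha)$, provided the $C^{2,\alpha}$-regularity of $\partial K_i$ is uniform in $i$. Such uniformity can be secured by representing $\partial K_i$ as a normal graph over $\partial K$ via diffeomorphisms $\Phi_i\colon\overline{K}\to\overline{K_i}$ converging to the identity in $C^{2,\alpha}$, whose existence follows from Hausdorff convergence together with the smoothness of $\partial K$.

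Setting $\tilde u_i:=u_i\circ\Phi_i$ on the fixed domain $\overline{K}$, Arzel\`a--Ascoli extracts a subsequence $\tilde u_{i_k}\to v$ in $C^1(\overline{K})$. Passing to the limit in the weak formulation of (\ref{Eq:patr}) on $K_{i_k}$ tested against $\phi\in C_0^1(K)$ (admissible for $K_{i_k}$ once $i_k$ is large), and using that $\xi\mapsto F^{p-1}(\xi)\nabla_\xi F(\xi)$ extends continuously to $\xi=0$ for $p>1$, shows that $v$ is a weak solution of (\ref{Eq:patr}) on $K$. The uniqueness in Lemma~\ref{lemia} forces $v=u$, and since every subsequence admits a further subsequence with the same limit, the full sequence converges. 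The main obstacle throughout is the variable-domain issue: simple zero-extension cannot yield $C^1$-convergence up to the boundary because (\ref{Eq:tdgij1}) shows $|\nabla u_i|$ does not vanish at $\partial K_i$. The diffeomorphism-straightening just described is what circumvents this, and it is precisely where the uniform $C^{2,\alpha}$-control of the boundaries and the quantitative non-degeneracy estimate (\ref{Eq:tdgij1}) of Lemma~\ref{lemia} enter in an essential way.
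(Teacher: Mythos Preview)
Your strategy coincides with the paper's: obtain uniform $C^{1,\alpha}$ estimates up to the boundary, apply Arzel\`a--Ascoli, pass to the limit in the weak formulation, and identify the limit via the uniqueness in Lemma~\ref{lemia}. The only technical difference is that the paper extends each $u_i$ by odd reflection across $\partial K_i$ (using local graph representations $\gamma_j^i$) to a fixed enlarged domain $\Omega$, whereas you pull back via diffeomorphisms $\Phi_i:\overline{K}\to\overline{K_i}$ to the fixed domain $\overline{K}$; both devices serve the same purpose of putting all functions on a common domain so that Arzel\`a--Ascoli applies.

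One caveat worth flagging: your claim that diffeomorphisms $\Phi_i\to\mathrm{id}$ in $C^{2,\alpha}$ ``follows from Hausdorff convergence together with the smoothness of $\partial K$'' is not correct as stated---Hausdorff convergence of $C^{2,\alpha}$ domains does not by itself imply $C^{2,\alpha}$ convergence of the boundaries, and hence neither the uniform $C^{2,\alpha}$ bounds you need for Lieberman's estimate nor the $C^{2,\alpha}$-convergence of the $\Phi_i$ are automatic. The paper's proof has exactly the same gap (it simply asserts $\|\gamma_j^i-\gamma_j\|_{C^{2,\alpha}}\to 0$), so this is really a defect of the lemma's hypotheses rather than of your argument; the statement implicitly requires uniform $C^{2,\alpha}$ control on $\partial K_i$, and once that is assumed both proofs go through.
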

\begin{proof}
Since $K_i$ converges to $K$ in Hausdorff metric as $i \rightarrow \infty$, from (\ref{EQ:HDJL}), there exists a sequence $\{\epsilon_i\}_{i=1}^\infty$ with $\epsilon_i \to 0$  as $i \to \infty$ such that 
\begin{align}\label{Eq:Hsdc}
K \subset K_i+\epsilon_i \mathbb{B}^n \quad \text{and} \quad K_i \subset K+\epsilon_i \mathbb{B}^n,
\end{align}
and there exists a compact set $\Omega \subset \mathbb{R}^n$ such that   
$$
\bigcup_{i \geq 1} \bar{K_i} \cup \bar{K} \subset \Omega.
$$ 
Since $\partial K, \partial K_i \in C^{2,\alpha}$, there exists a finite open cover $\{B_j\}_{j=1}^N$ and local coordinates $(x', x_n) \in \mathbb{R}^{n-1} \times \mathbb{R}$ such that for each $j$, there exist $\gamma_j \in C^{2,\alpha}(B_j')$ and $\gamma_j^i \in C^{2,\alpha}(B_j')$ (where $B_j' \subset \mathbb{R}^{n-1}$  is the projection of $B_j$ onto $\mathbb{R}^{n-1}$) satisfying  
$$
\partial K \cap B_j=\{(x', \gamma_j(x')) \mid x' \in B_j'\}, \quad \partial K_i \cap B_j = \{(x', \gamma_j^i(x')) \mid x' \in B_j'\},
$$
 with $\|\gamma_j^i - \gamma_j\|_{C^{2,\alpha}(B_j')} \to 0$ as $i \to \infty$.

By \cite[Lemma 3.1]{BCG2018} there exists a unique solution $u_i \in C^{1,\alpha}(\bar{K_i})$  to Problem (\ref{Eq:patr}) on $K_i$ satisfying
\begin{align}\label{eq:jbgji1}
\|u_i\|_{C^{1,\alpha}(\bar{K_i})} \leq C_1,
\end{align}
where  $C_1$ is constant independent of $i$.  
 
For each $j$, by reflection extension define $\tilde{u}_i^j \in C^{1, \alpha}\left(B_j\right)$ by
\begin{align}\label{Eq:jbyt}
\tilde{u}_i^j(x)= \begin{cases}u_i(x), & x_n \geq \gamma_j^i\left(x^{\prime}\right), \\ -u_i\left(x^{\prime}, 2 \gamma_j^i\left(x^{\prime}\right)-x_n\right), & x_n<\gamma_j^i\left(x^{\prime}\right),\end{cases}
\end{align}
which satisfies 
\begin{align}\label{eq:jbgji2}
\left\|\tilde{u}_i^j\right\|_{C^{1, \alpha}\left(B_j\right)} \leq C_2\left\|u_i\right\|_{C^{1, \alpha}\left(B_j \cap K_i\right)} \leq C_2 C_1,
\end{align}
where $C_2$ is independent of $i$. 

Let $\{\psi_j\}$ be a partition of unity subordinate to $\{B_j\}$, where $0 \leq \psi_j \leq 1$ and $\sum_{j=1}^N \psi_j =1$ in  $\bigcup_{j=1}^N B_j$. Combine (\ref{Eq:jbyt}), define the global extension $\tilde{u}_i: \Omega \rightarrow \mathbb{R}$ by
$$
\tilde{u}_i(x)= \begin{cases}\sum_{j=1}^N \psi_j(x) \tilde{u}_i^j(x), & x \in \bigcup_{j=1}^N B_j, \\ 0, & x \in \Omega \backslash \bigcup_{j=1}^N B_j.\end{cases}
$$
Since $\operatorname{supp}\left(\psi_j\right) \subset B_j$ and $\sum \psi_j=1$ in a neighborhood of $\partial K_i$, it follows that $\tilde{u}_i \in C^{1, \alpha}(\Omega)$ and $\tilde{u}_i=u_i$ on $K_i$. Moreover, by (\ref{eq:jbgji1}) and (\ref{eq:jbgji2}), we have
\begin{align}\label{eq:jbgji3}
\left\|\tilde{u}_i\right\|_{C^{1, \alpha}(\Omega)} \leq C_3 \sum_{j=1}^N\left\|\tilde{u}_i^j\right\|_{C^{1, \alpha}\left(B_j\right)} \leq C',
\end{align}
where $C'$ is independent of $i$.

The (\ref{eq:jbgji3}) implies
\begin{align}\label{Eq:dosa}
\|\tilde{u}_i\|_{L^\infty(\Omega)} \leq C', \quad \|\nabla \tilde{u}_i\|_{L^\infty(\Omega)} \leq C'.
\end{align}
Furthermore, since $\tilde{u}_i \in C^{1,\alpha}(\Omega)$, its gradient $\nabla \tilde{u}_i$ is $\alpha$-H\"{o}lder continuous, thus
\begin{align}\label{Eq:dosa2}
[\nabla \tilde{u}_i]_{C^{0,\alpha}(\Omega)} = \sup_{\substack{x \neq y \\ x,y \in \Omega}} \frac{|\nabla \tilde{u}_i(x) - \nabla \tilde{u}_i(y)|}{|x - y|^\alpha} \leq C'.
\end{align}
From (\ref{Eq:dosa}) and the mean value theorem yields, for any $x, y \in \Omega$,
$$
|\tilde{u}_i(x) - \tilde{u}_i(y)| \leq \|\nabla \tilde{u}_i\|_{L^\infty(\Omega)} |x - y| \leq C' |x - y|,
$$
choosing $\delta_1 = \epsilon / C'$ and $\epsilon > 0$, ensures that $|x - y| < \delta_1$ implies $|\tilde{u}_i(x) - \tilde{u}_i(y)| < \epsilon$ for all $i$. This mean that $\{\tilde{u}_i\}$ is equicontinuous.

In addition, (\ref{Eq:dosa2}) guarantees that for any $x, y \in \Omega$,
$$
|\nabla \tilde{u}_i(x) - \nabla \tilde{u}_i(y)| \leq [\nabla \tilde{u}_i]_{C^{0,\alpha}(\Omega)} |x - y|^\alpha \leq C' |x - y|^\alpha,
$$
let $\delta_2 = (\epsilon / C')^{1/\alpha}$ and  $\epsilon > 0$, ensures that $|x - y| < \delta_2$ implies $|\nabla \tilde{u}_i(x) - \nabla \tilde{u}_i(y)| < \epsilon$ for all $i$. Thus $\{\nabla \tilde{u}_i\}$ is also equicontinuous.

By Arzel\`{a}-Ascoli theorem there exists a subsequence still denoted $\{\tilde{u}_i\}$ such that 
$\tilde{u}_i \rightarrow \tilde{u}$ uniformly in $C^0(\Omega)$ and there exists a further subsequence still denoted $\left\{\nabla \tilde{u}_i\right\}$ such that $\nabla \tilde{u}_i \rightarrow \nabla \tilde{u}$ uniformly in $C^0(\Omega)$.

Next, we show $\tilde{u}|_K$ is a solution to problem \eqref{Eq:patr} on $K$. By the Hausdorff convergence $d_H(K_i, K)=\epsilon_i \to 0$, for any test function $\phi \in C_0^\infty(K)$, let
$\delta = \min_{x \in \operatorname{supp}(\phi)} \operatorname{dist}(x, \partial K) > 0$, there exists $i_0$ such that $\epsilon_i < \delta/2$ for all $i > i_0$. Thus for any $x \in \operatorname{supp}(\phi)$ and $y \in \partial K_i$, there exists $z \in \partial K$ such that $|y - z| \leq d_H(K_i, K) < \delta/2$ and 
$$
|x - y| \geq |x - z| - |z - y| > \delta - \frac{\delta}{2} = \frac{\delta}{2},
$$
thus $\operatorname{supp}(\phi) \subset K_i$, i.e., $\phi \in C_0^\infty(K_i)$.

Since $u_i$ is a solution to problem \eqref{Eq:patr} on $K_i$, then
\begin{align}\label{Eq:rjcz}
\int_{K_i} F^{p-1}(\nabla u_i) \langle \nabla_\xi F(\nabla u_i), \nabla \phi \rangle  dx = \int_{K_i} -\phi  dx.
\end{align}
Since $\tilde{u}_i = u_i$ on $K_i$, thus (\ref{Eq:rjcz}) can be rewritten as
\begin{align}\label{Eq:rjcz1}
\int_{K_i} F^{p-1}(\nabla \tilde{u}_i) \langle \nabla_\xi F(\nabla \tilde{u}_i), \nabla \phi \rangle  dx = \int_{K_i} -\phi  dx.
\end{align}
Since $\tilde{u}_i \to \tilde{u}$ and $\nabla \tilde{u}_i \to \nabla \tilde{u}$ uniformly on the compact set $\Omega$, and  $\operatorname{supp}(\phi)$ is a compact subset of $\Omega$, thus  $\tilde{u}_i \to \tilde{u}$ and $\nabla \tilde{u}_i \to \nabla \tilde{u}$ converge uniformly on $\operatorname{supp}(\phi)$, from (\ref{Eq:rjcz1}), we obtain
$$
\int_K F^{p-1}(\nabla \tilde{u}) \langle \nabla_\xi F(\nabla \tilde{u}), \nabla \phi \rangle  dx = \int_K -\phi  dx,
$$
this shows that $\tilde{u}|_K$ satisfies
$$
\Delta_p^F \tilde{u} = -1 \quad \text{in } K,
$$
since $\tilde{u}_i = 0$ on $\partial K_i$ and $\partial K_i \to \partial K$, the uniform convergence $\tilde{u}_i \to \tilde{u}$ implies
$$
\tilde{u} = 0 \quad \text{on } \partial K,
$$
thus $\tilde{u}|_K$ is the solution to (\ref{Eq:patr}). Due to the uniqueness of the solution to (\ref{Eq:patr}), we have
$$
\tilde{u}|_K = u.
$$
The gradient convergence $\nabla \tilde{u}_i \to \nabla \tilde{u}$ then implies 
$$
\nabla \tilde{u}|_K = \nabla u.
$$

Finally, we prove $u_i \to u$ uniformly in $C^1(\bar{K_i})$. For $x \in \bar{K_i}$, we obtain
$$
|u_i(x) - u(x)| \leq |\tilde{u}_i(x) - \tilde{u}(x)| + |\tilde{u}(x) - u(x)|.
$$
Since $|\tilde{u}_i(x) - \tilde{u}(x)| \leq \|\tilde{u}_i - \tilde{u}\|_{C^0(\Omega)} \to 0$ uniformly. For $x \in \bar{K_i} \setminus \bar{K}$,  by (\ref{Eq:Hsdc}),  there exists $y \in \partial K$ such that $|x - y| \leq \epsilon_i \to 0$. Moreover $\tilde{u} = u$ on $\bar{K}$ and $u \in C^0(\bar{K})$, we get
$$
|\tilde{u}(x) - u(x)| = |\tilde{u}(x) - \tilde{u}(y)| \leq \|\tilde{u}\|_{C^1(\Omega)} |x - y| \leq C \epsilon_i\to 0.
$$
Thus, 
\begin{align}\label{Eq:dopa}
\sup_{x \in \bar{K_i}} |\tilde{u}(x) - u(x)| \to 0 \text{ as } i \rightarrow \infty.
\end{align}

On the other hand, for $x \in \bar{K_i}$, we have
$$
|\nabla u_i(x) - \nabla u(x)| \leq |\nabla \tilde{u}_i(x) - \nabla \tilde{u}(x)| + |\nabla \tilde{u}(x) - \nabla u(x)|.
$$
Since $|\nabla \tilde{u}_i(x) - \nabla \tilde{u}(x)| \leq \|\nabla \tilde{u}_i - \nabla \tilde{u}\|_{C^0(\Omega)} \to 0$ uniformly. For $x \in \bar{K_i} \setminus \bar{K}$,  by (\ref{Eq:Hsdc}),  choose $y \in \partial K$ with $|x - y| \leq \epsilon_i\to0$. In addition $\nabla \tilde{u} = \nabla u$ on $\bar{K}$ and $\nabla \tilde{u} \in C^0(\Omega)$, we obtain
$$
|\nabla \tilde{u}(x) - \nabla u(x)| = |\nabla \tilde{u}(x) - \nabla \tilde{u}(y)| \leq \|\nabla \tilde{u}\|_{C^{0,1}(\Omega)} |x - y| \leq C \epsilon_i\to0.
$$
Thus, 
\begin{align}\label{Eq:dopa1}
\sup_{x \in \bar{K_i}} |\nabla \tilde{u}(x) - \nabla u(x)| \to 0 \text{ as } i \rightarrow \infty.
\end{align}
From (\ref{Eq:dopa}) and (\ref{Eq:dopa1}), we deduce
$$
\|u_i - u\|_{C^1(\bar{K_i})} = \sup_{x \in \bar{K_i}} |u_i(x) - u(x)| + \sup_{x \in \bar{K_i}} |\nabla u_i(x) - \nabla u(x)| \to 0 \text{ as } i \rightarrow \infty.
$$
Therefore, $u_i \to u$ uniformly in $C^1(\bar{K_i})$.
\end{proof}

Next, we will prove that the anisotropic $p$-torsional measure is weakly convergent.

\begin{lemma}\label{lemapytrs}
 Let $1<p<\infty$ and $K,K_i \subset \mathbb{R}^n$ be bounded domain with the boundary of class $C^{2, \alpha}$ and $F$ be a norm in $\mathscr{I}_p$. If $K_i$ converges to $K$ in Hausdorff metric as $i \rightarrow \infty$. Then the sequence of measures $S_{F,p}\left(K_i, \cdot\right)$ weakly converges to $S_{F,p}(K, \cdot)$.
\end{lemma}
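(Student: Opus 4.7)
The plan is to verify weak convergence by showing that, for every $f\in C(\mathbb{S}^{n-1})$,
\[
\int_{\partial K_i} f(\mathbf{g}_{K_i}(x))\,F^p(\nabla u_i(x))\,d\mathscr{H}^{n-1}(x)\longrightarrow \int_{\partial K} f(\mathbf{g}_K(x))\,F^p(\nabla u(x))\,d\mathscr{H}^{n-1}(x),
\]
which, by (\ref{Eq:gdfs}) and (\ref{Eq:AMBJCD}), is precisely $\int_{\mathbb{S}^{n-1}}f\,dS_{F,p}(K_i,\cdot)\to\int_{\mathbb{S}^{n-1}}f\,dS_{F,p}(K,\cdot)$.

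The first step is to re-use the local graph structure already set up in the proof of Lemma \ref{lemyzyj}. Take the finite cover $\{B_j\}_{j=1}^N$ together with the graph representations $\partial K\cap B_j=\{(x',\gamma_j(x')):x'\in B_j'\}$ and $\partial K_i\cap B_j=\{(x',\gamma_j^i(x')):x'\in B_j'\}$, with $\gamma_j^i\to\gamma_j$ in $C^{2,\alpha}(B_j')$. Fix a partition of unity $\{\psi_j\}$ in a neighbourhood $U$ of $\partial K$ subordinate to $\{B_j\}$; the Hausdorff convergence $K_i\to K$ guarantees $\partial K_i\subset U$ for all $i$ large, so $\sum_j \psi_j\equiv 1$ on every $\partial K_i$ in the tail. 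The second step is to rewrite the integrals in these charts as
\[
I_i=\sum_{j=1}^N\int_{B_j'}\psi_j(x',\gamma_j^i(x'))\,f(\nu_j^i(x'))\,F^p(\nabla\tilde u_i(x',\gamma_j^i(x')))\,\sqrt{1+|\nabla\gamma_j^i(x')|^2}\,dx',
\]
with the outer unit normal $\nu_j^i(x')=(-\nabla\gamma_j^i(x'),1)/\sqrt{1+|\nabla\gamma_j^i(x')|^2}$ and $\tilde u_i$ the $C^{1,\alpha}$-extension constructed in Lemma \ref{lemyzyj}.

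The third step is to pass to the limit inside each integral. Four ingredients combine: (a) $C^{2,\alpha}$-convergence of the graphs forces $\nabla\gamma_j^i\to\nabla\gamma_j$ and hence $\nu_j^i\to\nu_j$ and $\sqrt{1+|\nabla\gamma_j^i|^2}\to\sqrt{1+|\nabla\gamma_j|^2}$ uniformly on $B_j'$; (b) continuity of $f$ then upgrades this to uniform convergence $f(\nu_j^i)\to f(\nu_j)$ on $B_j'$; (c) Lemma \ref{lemyzyj} provides uniform convergence $\tilde u_i\to\tilde u$ and $\nabla\tilde u_i\to\nabla\tilde u$ on the fixed compact set $\Omega\supset\bigcup_i\bar K_i\cup\bar K$, and combined with the uniform continuity of $\nabla\tilde u$ on $\Omega$ and $(x',\gamma_j^i(x'))\to(x',\gamma_j(x'))$ uniformly, a standard triangle inequality gives $\nabla\tilde u_i(x',\gamma_j^i(x'))\to\nabla\tilde u(x',\gamma_j(x'))=\nabla u(x',\gamma_j(x'))$ uniformly on $B_j'$; (d) continuity of $F^p$ then yields uniform convergence of $F^p(\nabla u_i(x',\gamma_j^i(x')))$. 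Since each integrand converges uniformly on the bounded domain $B_j'$ and the Jacobians are uniformly bounded, every piece of the sum converges, giving $I_i\to I$.

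The main obstacle is ingredient (c): one must convert the $C^1$-convergence $u_i\to u$ stated in Lemma \ref{lemyzyj} (where the underlying domains $\bar K_i$ are moving) into genuine uniform convergence of $F^p(\nabla u_i)$ evaluated along the varying boundaries $\partial K_i$. This is exactly what the common extension $\tilde u_i$ on the fixed ambient set $\Omega$ in Lemma \ref{lemyzyj} is designed for, so all the analytic difficulty is already absorbed there; the rest of the proof is bookkeeping with the local graph parametrization and the partition of unity.
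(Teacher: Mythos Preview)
Your proposal is correct and follows a genuinely different route from the paper. The paper proceeds by a triangle-inequality decomposition: it first bounds $\int_{\partial K_i}|f(\mathbf g_i)|\cdot|F^p(\nabla u_i)-F^p(\nabla u)|\,d\mathscr H^{n-1}$ using Lemma~\ref{lemhdy} (the algebraic estimate $|F(\eta)^p-F(\xi)^p|\le p[F(\xi)+F(\eta)]^{p-1}F(\eta-\xi)$) together with the $C^1$-convergence of Lemma~\ref{lemyzyj}, and then handles the remaining term $\int_{\partial K_i}f(\mathbf g_i)F^p(\nabla u)\,d\mathscr H^{n-1}-\int_{\partial K}f(\mathbf g)F^p(\nabla u)\,d\mathscr H^{n-1}$ by invoking the non-tangential-limit Lemma~\ref{lemfqjix} and an external result (Lemma~3.4 of Colesanti--Fimiani) on convergence of boundary integrals for varying convex domains. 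Your approach instead pulls all boundary integrals back to fixed parameter domains $B_j'$ via the graph charts already built in Lemma~\ref{lemyzyj}, so that every factor in the integrand (normals, Jacobians, $F^p(\nabla\tilde u_i)$ evaluated along the graph) converges uniformly on a fixed set, and the conclusion follows from elementary uniform-convergence arguments. Your route is more self-contained---it avoids both Lemma~\ref{lemfqjix} and the external Colesanti--Fimiani lemma---while the paper's decomposition is more modular, cleanly separating the ``function varies'' and ``domain varies'' contributions.
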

\begin{proof}
Since $K_i, K\subset \mathbb{R}^n$ is a bounded domain with the boundary of class $C^{2, \alpha}$ and $F$ is a norm in $\mathscr{I}_p, p>1$. By Lemma \ref{lemia}, there exists a unique solution $u$ to Problem (\ref{Eq:patr}) in $K$ with $u \in C^{1, \alpha}(\bar{K})$. Correspondingly, in $K_i$ there exists a unique solution $u_i \in C^{1, \alpha}(\bar{K_i})$. In addition, there exists $0<c<1$ such that
\begin{align}\label{Eq:tdguJ}
c \leq |\nabla u| \leq \frac{1}{c} \ \text{on} \ \partial K, \quad c \leq |\nabla u_i| \leq \frac{1}{c} \ \text{on} \ \partial K_i,
\end{align}

In order to prove that $S_{F,p}(K_i,\cdot)$ weakly converges to $S_{F,p}(K,\cdot)$, it is sufficient to prove that for any continuous function $f \in C(\mathbb{S}^{n-1})$ such that
$$
\int_{\mathbb{S}^{n-1}} f(\eta) d S_{F,p}(K_i, \eta) \rightarrow \int_{\mathbb{S}^{n-1}} f(\eta) d S_{F,p}(K, \eta),
$$
this is equivalent to prove that
\begin{align}\label{Eq:wwa}
\int_{\partial K_i} f\left(\mathbf{g}_i(x)\right)F^p(\nabla u_i(x))d \mathscr{H}^{n-1}(x) \rightarrow \int_{\partial K} f(\mathbf{g}(x))F^p(\nabla u(x)) d \mathscr{H}^{n-1}(x).
\end{align}

Note that,
\begin{align}\label{Eq:kdt9}
\nonumber& \bigg|\int_{\partial K_i} f\left(\mathbf{g}_i(x)\right)F^p(\nabla u_i(x))d \mathscr{H}^{n-1}(x) -\int_{\partial K} f(\mathbf{g}(x))F^p(\nabla u(x)) d \mathscr{H}^{n-1}(x)\bigg| \\
\nonumber\leq& \int_{\partial K_i}\left|f(\mathbf{g}_i(x))\right| \cdot\bigg|F^p(\nabla u_i(x))-F^p(\nabla u(x))\bigg| d \mathscr{H}^{n-1}(x) \\
&+\bigg|\int_{\partial K_i} f(\mathbf{g}_i(x))F^p(\nabla u(x))d \mathscr{H}^{n-1}(x)-\int_{\partial K} f(\mathbf{g}(x))F^p(\nabla u(x))d \mathscr{H}^{n-1}(x)\bigg|.
\end{align}
where $\mathbf{g}_i, \mathbf{g}$ be Gauss maps of $K_i, K$ respectively.

We firstly prove that
\begin{align}\label{Eq:snxzi}
\int_{\partial K_i}\left|f(\mathbf{g}_i(x))\right| \cdot\bigg|F^p(\nabla u_i(x))-F^p(\nabla u(x))\bigg| d \mathscr{H}^{n-1}(x) \rightarrow 0, \text { as } i \rightarrow \infty.
\end{align}

By the continuity of $f$ on $\mathbb{S}^{n-1}$,  Lemma \ref{lemhdy} and  (\ref{Eq:Fyjd}) and (\ref{Eq:tdguJ}), use (\ref{Eq:Fyjd}) again, we have
\begin{align}\label{Eq:snxz2}
\nonumber&\int_{\partial K_i}\left|f(\mathbf{g}_i(x))\right| \cdot\bigg|F^p(\nabla u_i(x))-F^p(\nabla u(x))\bigg| d \mathscr{H}^{n-1}(x)\\
\nonumber\leq&C \int_{\partial K_i} \bigg|F^p(\nabla u_i(x))-F^p(\nabla u(x))\bigg| d \mathscr{H}^{n-1}(x)\\
\nonumber\leq&C \int_{\partial K_i} \bigg|p[F(\nabla u_i(x))+F(\nabla u(x))]^{p-1} F(\nabla u_i(x)-\nabla u(x))\bigg| d \mathscr{H}^{n-1}(x)\\
\nonumber\leq& C_1\left(\int_{\partial K_i}\bigg| F(\nabla u_i(x)-\nabla u(x)) \bigg|d \mathscr{H}^{n-1}(x)\right)\\
\leq& C_2\left(\int_{\partial K_i}\bigg| \nabla u_i(x)-\nabla u(x) \bigg|d \mathscr{H}^{n-1}(x)\right),
\end{align}
where $C, C_1, C_2>0$ are independent of $i$. 

Since $ \partial K_i$ is $ C^{2,\alpha}$, from Lemma \ref{lemapytrs}, since $u_i \to u$ uniformly in $C^1(\bar{K_i})$, it implies that $\sup_{x\in \partial K_i}| \nabla u_i - \nabla u |\to 0$, we obtain 
\begin{align}\label{Eq:gjcl}
\int_{\partial K_i}| \nabla u_i(x)-\nabla u(x) |d \mathscr{H}^{n-1}(x) \leq \sup_{x\in \partial K_i}| \nabla u_i - \nabla u |\mathscr{H}^{n-1}(\partial K_i)\to0,
\end{align}  
from this and (\ref{Eq:gjcl}) and (\ref{Eq:snxz2}), we obtain (\ref{Eq:snxzi}).

Now, we deal with the second summand of the right hand of (\ref{Eq:kdt9}). Let
$$
\mathscr{I}(x)=f(\mathbf{g}(x))F^p(\nabla u(x)), \quad \mathscr{I}_i(x)=f(\mathbf{g}_i(x))F^p(\nabla u(x)), i \in \mathbb{N}.
$$
 Combine the continuity of $f$ on $\mathbb{S}^{n-1}$, by (\ref{Eq:tdguJ}) and (\ref{Eq:Fyjd}),  there exists a constant $C_3>0$ such that
$$
\|\mathscr{I}(x)\|_{L^{\infty}(\partial K)}\leq C_3, \quad \quad \left\|\mathscr{I}_i\right\|_{L^{\infty}\left(\partial K_i\right)} \leq C_3, \quad \forall i \in \mathbb{N}.
$$

Let $x \in \partial K$ and $x_i \in \partial K_i$ such that $x_i \rightarrow x$ non-tangentially. Since $\mathbf{g}\left(x_i\right) \rightarrow \mathbf{g}(x)$, then $f\left(\mathbf{g}_i(x)\right) \rightarrow f(\mathbf{g}(x))$
 and $\nabla u(x_i)\rightarrow\nabla u(x)$. These imply that $g_i \rightarrow g$ as $i \rightarrow \infty$. Thus, by Lemma \ref{lemfqjix} and \cite[Lemma 3.4]{CAM2010}, we have
$$
\bigg|\int_{\partial K_i} f(\mathbf{g}_i(x))F^p(\nabla u(x))d \mathscr{H}^{n-1}(x)-\int_{\partial K} f(\mathbf{g}(x))F^p(\nabla u(x))d \mathscr{H}^{n-1}(x)\bigg|\rightarrow0, \text { as } i \rightarrow \infty.
$$
From this and together with (\ref{Eq:kdt9}) and (\ref{Eq:snxzi}), we obtain (\ref{Eq:wwa}). That completes the proof.
\end{proof}

From Lemma \ref{lemapytrs}, (\ref{Eq:AFNZg}) and the continuity of the support function, we can easily deduce that the anisotropic $p$-torsional rigidity $\tau_{F,p}$ is continuous.

\begin{lemma}\label{lemlx1t}
 Let $1<p<\infty$ and $K,K_i \subset \mathbb{R}^n$ be a bounded domain with the boundary of class $C^{2, \alpha}$ and $F$ be a norm in $\mathscr{I}_p$. If $K_i$ converges to $K$ in Hausdorff metric as $i \rightarrow \infty$. Then $\tau_{F,p}$ is continuous, that is,
$$
\tau_{F,p}(K_i) \rightarrow \tau_{F,p}(K).
$$
\end{lemma}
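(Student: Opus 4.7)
The plan is to combine the representation formula (\ref{Eq:AFNZg}) with the weak convergence in Lemma \ref{lemapytrs} and the continuity of support functions under Hausdorff convergence. Rewriting (\ref{Eq:AFNZg}) using the definition (\ref{Eq:AMBJCD}) of the anisotropic $p$-torsional measure, we have
\begin{equation*}
\tau_{F,p}(K)=\frac{p-1}{n(p-1)+p}\int_{\mathbb{S}^{n-1}} h_K(\xi)\, dS_{F,p}(K,\xi),
\end{equation*}
and the analogous identity for each $K_i$. So it suffices to show that $\int_{\mathbb{S}^{n-1}} h_{K_i}\, dS_{F,p}(K_i,\cdot)\to\int_{\mathbb{S}^{n-1}} h_K\, dS_{F,p}(K,\cdot)$.

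For this, I would use the standard decomposition
\begin{equation*}
\int h_{K_i}\, dS_{F,p}(K_i,\cdot)-\int h_K\, dS_{F,p}(K,\cdot)=\int (h_{K_i}-h_K)\, dS_{F,p}(K_i,\cdot)+\int h_K\, (dS_{F,p}(K_i,\cdot)-dS_{F,p}(K,\cdot)).
\end{equation*}
The Hausdorff convergence $K_i\to K$ gives $\|h_{K_i}-h_K\|_{L^\infty(\mathbb{S}^{n-1})}\to 0$, a standard fact for support functions. Meanwhile, testing the weak convergence from Lemma \ref{lemapytrs} against the constant function $1$ shows that the total masses $S_{F,p}(K_i,\mathbb{S}^{n-1})$ converge to $S_{F,p}(K,\mathbb{S}^{n-1})$, hence are uniformly bounded. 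Therefore the first term is bounded by $\|h_{K_i}-h_K\|_\infty\cdot S_{F,p}(K_i,\mathbb{S}^{n-1})\to 0$. The second term tends to zero because $h_K$ is continuous on $\mathbb{S}^{n-1}$ and $S_{F,p}(K_i,\cdot)$ converges weakly to $S_{F,p}(K,\cdot)$ by Lemma \ref{lemapytrs}. Combining both parts yields the desired convergence, and multiplying by the constant $(p-1)/(n(p-1)+p)$ gives $\tau_{F,p}(K_i)\to\tau_{F,p}(K)$.

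There is no real obstacle here: all the heavy lifting (existence of gradients on the boundary, $C^1$-convergence of the solutions $u_i\to u$, and the resulting weak convergence of the measures) has already been carried out in Lemmas \ref{lemyzyj} and \ref{lemapytrs}. The only thing to be slightly careful about is the uniform boundedness of the measure masses, but as noted this is immediate from testing weak convergence against the constant function $1$. The argument is essentially a clean application of the portmanteau-type fact that if $\mu_i\rightharpoonup\mu$ weakly with bounded total masses and $f_i\to f$ uniformly with all $f_i,f$ continuous, then $\int f_i\, d\mu_i\to\int f\, d\mu$.
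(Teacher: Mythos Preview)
Your proposal is correct and follows exactly the route the paper indicates: the paper does not give a detailed proof of this lemma but simply remarks that it follows from Lemma~\ref{lemapytrs}, formula~(\ref{Eq:AFNZg}), and the continuity of the support function under Hausdorff convergence. Your decomposition into a uniform-convergence term and a weak-convergence term is the standard way to make that one-line sketch rigorous.
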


Finally, we will derive the variational formula for the anisotropic $p$-torsional rigidity.

\begin{lemma}\label{lembfgsn}
Let $1<p<\infty$ and $K$ be a convex body containing the origin in its interior, such that $\partial K$ up to set of $(n-1)$-dimensional Hausdorff measure zero, and $f: \mathbb{S}^{n-1} \rightarrow \mathbb{R}$ be a continuous function. For sufficiently small $\delta>0$ and each $t \in(-\delta, \delta)$, the continuous function $h_t: \mathbb{S}^{n-1} \rightarrow(0, \infty)$ is defined by
$$h_t(v)=h_K(v)+t f(v), \ \ \ v \in \mathbb{S}^{n-1},$$
then
$$
\lim _{t \rightarrow 0} \frac{\tau_{F,p}\left([h_t]\right)-\tau_{F,p}(K)}{t}=\int_{\mathbb{S}^{n-1}} f(v) d S_{F,p}(K,v).
$$
\end{lemma}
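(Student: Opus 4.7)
The plan is to obtain the variational formula by computing the shape derivative of $\tau_{F,p}([h_t]) = \int_{[h_t]} u_t\,dx$ at $t=0$, in the spirit of the Colesanti--Fimiani treatment of the classical torsional case. I would first reduce to smooth data by assuming $K$ has $C^{2,\alpha}$ boundary with strictly positive Gauss curvature and $f \in C^\infty(\mathbb{S}^{n-1})$, so that for $|t|$ small $K_t := [h_t]$ also has $C^{2,\alpha}$ boundary and the family $K_t$ admits a smooth parametrization $\Phi_t: K \to K_t$ with $\Phi_0 = \operatorname{id}$, whose boundary velocity satisfies $V\cdot \mathbf{g}_K(x) = f(\mathbf{g}_K(x))$ in accordance with the radial derivative formula of Lemma~\ref{lem:jxy}. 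The general case (convex $K$, continuous $f$) then follows from Lemmas~\ref{lemlx1t}, \ref{lemapytrs}, \ref{lemyzyj} by a standard approximation and diagonal argument.

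In the smooth setting, Reynolds' transport theorem applied to $\tau_{F,p}(K_t) = \int_{K_t} u_t\,dx$ yields
\begin{align*}
\frac{d}{dt}\tau_{F,p}(K_t)\bigg|_{t=0} = \int_K u'(x)\,dx + \int_{\partial K} u(x)\, f(\mathbf{g}_K(x))\,d\mathscr{H}^{n-1}(x) = \int_K u'(x)\,dx,
\end{align*}
where the boundary term vanishes because $u=0$ on $\partial K$ and $u'(x) := \tfrac{d}{dt}u_t(x)|_{t=0}$ is the Eulerian shape derivative. Differentiating $\Delta_p^F u_t = -1$ in $t$, one finds that $u'$ solves the linearised problem
\begin{align*}
\operatorname{div}\bigl(A(\nabla u)\,\nabla u'\bigr) = 0 \quad \text{in } K, \qquad A(\xi) := (p-1)F^{p-2}(\xi)\,\nabla_\xi F(\xi)\otimes \nabla_\xi F(\xi) + F^{p-1}(\xi)\,D^2_\xi F(\xi),
\end{align*}
with Dirichlet datum $u'|_{\partial K} = -V\cdot \nabla u = f(\mathbf{g}_K)\,|\nabla u|$, obtained from $u_t = 0$ on $\partial K_t$ together with the fact that $\nabla u$ is inward-normal on $\partial K$.

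It remains to evaluate $\int_K u'\,dx$. Testing the linearised equation against $u$ and using $u|_{\partial K}=0$ gives $\int_K \nabla u\cdot A(\nabla u)\,\nabla u'\,dx = 0$. Euler's identity $\nabla_\xi F(\nabla u)\cdot \nabla u = F(\nabla u)$ combined with the degeneracy $D^2_\xi F(\nabla u)\,\nabla u = 0$ (differentiating the $0$-homogeneity of $\nabla_\xi F$) reduces this to $\int_K F^{p-1}(\nabla u)\nabla_\xi F(\nabla u)\cdot \nabla u'\,dx = 0$. On the other hand, rewriting $\int_K u'\,dx = -\int_K u'\,\Delta_p^F u\,dx$ and integrating by parts yields
\begin{align*}
\int_K u'\,dx = -\int_{\partial K} u'\,F^{p-1}(\nabla u)\nabla_\xi F(\nabla u)\cdot \mathbf{g}_K\,d\mathscr{H}^{n-1} + \int_K F^{p-1}\nabla_\xi F\cdot \nabla u'\,dx,
\end{align*}
whose bulk term is zero by the previous step. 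On $\partial K$ one has $\nabla u = -|\nabla u|\mathbf{g}_K$, hence $\nabla_\xi F(\nabla u)\cdot \mathbf{g}_K = -F(\mathbf{g}_K)$, $F^{p-1}(\nabla u) = |\nabla u|^{p-1}F^{p-1}(\mathbf{g}_K)$, and $F^p(\nabla u) = |\nabla u|^p F^p(\mathbf{g}_K)$; substituting the boundary value $u' = f(\mathbf{g}_K)|\nabla u|$ collapses the remaining integral to $\int_{\partial K} f(\mathbf{g}_K(x))\,F^p(\nabla u(x))\,d\mathscr{H}^{n-1}(x)$, which equals $\int_{\mathbb{S}^{n-1}} f\,dS_{F,p}(K,\cdot)$ by the definition (\ref{Eq:AMBJCD}) and the transfer formula (\ref{Eq:gdfs}).

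The principal obstacle I foresee is the rigorous justification of the shape derivative $u'$ and of the $C^1$-regularity of $t\mapsto \tau_{F,p}([h_t])$ when $K$ is merely convex and $f$ merely continuous. The uniform bound $c\le |\nabla u_t|\le 1/c$ on $\partial K_t$ supplied by Lemma~\ref{lemia}, combined with the $C^1$-convergence $u_t\to u$ from Lemma~\ref{lemyzyj} and the weak continuity of $S_{F,p}$ from Lemma~\ref{lemapytrs}, should provide enough control on the difference quotient $t^{-1}\bigl(\tau_{F,p}([h_t]) - \tau_{F,p}(K)\bigr)$ along smooth approximations of both $K$ and $f$ to pass to the desired limit identity.
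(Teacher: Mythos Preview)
Your approach is correct and takes a genuinely different route from the paper. The paper expresses $\tau_{F,p}([h_t])$ in polar coordinates via the representation $\int_{[h_t]} F^p(\nabla u)\,dx$ and differentiates only the upper limit $\rho_{[h_t]}(v)$ using Lemma~\ref{lem:jxy}, then converts the resulting radial expression into a surface integral on $\partial K$ through the Jacobian of the radial map (Lemma~\ref{lem:jxy1}); throughout, the integrand $F^p(\nabla u)$ is treated as frozen in $t$. You instead work with the representation $\tau_{F,p}(K_t)=\int_{K_t} u_t\,dx$ and apply Reynolds' transport theorem: the boundary contribution vanishes because $u=0$ on $\partial K$, and the whole derivative is carried by the material term $\int_K u'\,dx$, which you evaluate by testing the linearised Finsler $p$-Laplace equation against $u$ and invoking Euler's identity together with the $0$-homogeneity of $\nabla_\xi F$. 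Your intermediate identity $\int_K F^{p-1}(\nabla u)\,\nabla_\xi F(\nabla u)\cdot\nabla u'\,dx=0$ is exactly the statement that the variation of $u_t$ contributes nothing to $\tfrac{d}{dt}\int_{K_t} F^p(\nabla u_t)\,dx$, and hence retroactively justifies the paper's freezing of $u$; the paper's argument is shorter but leaves this cancellation implicit. Your route is closer in spirit to Colesanti--Fimiani and makes the role of the PDE transparent, at the cost of needing the smoothness/approximation machinery you flag at the end.
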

\begin{proof}
Begin by writing in polar coordinates, with $h_t=h_K+t f$,
$$
\tau_{F,p}\left(\left[h_t\right]\right)=\int_K F^p(\nabla u(x)) d x=\int_{\mathbb{S}^{n-1}} \int_0^{\rho_{\left[h_t\right]}(v)} F^p(\nabla u(\xi v)) \xi^{n-1} d \xi d u=\int_{\mathbb{S}^{n-1}} G_t(v) d v,
$$
where $G_t(v)=\int_0^{\rho_{\left[h_t\right]}(v)}F^p(\nabla u(\xi v)) \xi^{n-1} d \xi$. By the Lebesgue differentiation theorem, we know that 
$\left.\frac{\mathrm{d} G_t}{\mathrm{~d} t}\right|_{t=0}=G_0^{\prime}$ exists. Let $\mathcal{I}=\left[\rho_K(v), \rho_{\left[h_t\right]}(v)\right]$, for all $v \in \mathbb{S}^{n-1}$, note $F^p(\nabla u(\xi v))$ exists from the assumption on $F$, by Lemma \ref{lem:jxy}, we obtain
\begin{align*}
\lim _{t \rightarrow 0} \frac{G_t(v)-G_0(v)}{t} & =\lim _{t \rightarrow 0} \frac{1}{t} \int_\mathcal{I} F^p(\nabla u(\xi v)) \xi^{n-1} d \xi \\
& =\lim _{t \rightarrow 0} \frac{\rho_{\left[h_t\right]}(v)-\rho_K(v)}{t} \frac{1}{|\mathcal{I}|} \int_\mathcal{I} F^p(\nabla u(\xi v)) \xi^{n-1} d \xi \\
& =F^p(\nabla u(r_K(v))) \frac{f\left(\mathbf{g}_K\left(r_K(v)\right)\right)}{h_K\left(\mathbf{g}_K\left(r_K(v)\right)\right)} \rho_K^n(v).
\end{align*}
Meanwhile, for some $M>0$,
$$
\frac{f\left(\mathbf{g}_K\left(r_K(v)\right)\right)}{h_K\left(\mathbf{g}_K\left(r_K(v)\right)\right)} \rho_K(v)=\lim _{t \rightarrow 0} \frac{\rho_{\left[h_t\right]}(v)-\rho_K(v)}{t} \leq M,
$$
from the Lemma \ref{lem:jxy}, we have that the derivative is dominated by the integrable function 
$$M\left(\max _{u \in \mathbb{S}^{n-1}} \rho_K(v)\right)^{n-1} F^p(\nabla u(r_K(v))).$$
By dominated convergence to differentiate underneath the integral sign and combining Lemma \ref{lem:jxy1} and (\ref{Eq:gdfs}), we derive
\begin{align*}
\int_{\mathbb{S}^{n-1}} G_0^{\prime}(v) d u & =\int_{\mathbb{S}^{n-1}} F^p(\nabla u(r_K(v))) \frac{f\left(\mathbf{g}_K\left(r_K(v)\right)\right)}{h_K\left(\mathbf{g}_K\left(r_K(v)\right)\right)} \rho_K^n(v) d v \\
& =\int_{\partial K} F^p(\nabla u(x)) f\left(\mathbf{g}_K(x)\right) d x\\
& =\int_{\mathbb{S}^{n-1}} f(v) d S_{F,p}(K,v).
\end{align*}
This completes the proof of Lemma \ref{lembfgsn}.
\end{proof}

Let
$$
h_t(v)=h_K(v)+tf(v) h_K(v), \quad v \in \mathbb{S}^{n-1},
$$
in Lemma \ref{lembfgsn}, we can immediately get the following result.

\begin{lemma}\label{lemlogb}
Let $1<p<\infty$ and $K$ be a convex body containing the origin in its interior, such that $\partial K$ up to set of $(n-1)$-dimensional Hausdorff measure zero, and $f: \mathbb{S}^{n-1} \rightarrow \mathbb{R}$ be continuous function. For sufficiently small $\delta>0$ and each $t \in(-\delta, \delta)$, a continuou function $h_t: \mathbb{S}^{n-1} \rightarrow(0, \infty)$ is defined by
$${\rm{log}} h_t(v)={\rm{log}} h_K(v)+t f(v), \ \ \ v \in \mathbb{S}^{n-1},$$
then
$$
\lim _{t \rightarrow 0} \frac{\tau_{F,p}\left([h_t]\right)-\tau_{F,p}(K)}{t}=\frac{n(p-1)+p}{p-1}\int_{\mathbb{S}^{n-1}} f(v) d \tau_{F,p}^{\rm{log}}(K,v).
$$
\end{lemma}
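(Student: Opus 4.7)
The plan is to deduce this lemma directly from Lemma \ref{lembfgsn} applied to the continuous function $\tilde f := h_K f$, followed by rewriting the resulting integral in terms of $\tau_{F,p}^{\rm log}(K,\cdot)$. First I would note that the logarithmic perturbation admits the uniform Taylor expansion
\[
h_t(v) = h_K(v) e^{tf(v)} = h_K(v) + t\, h_K(v) f(v) + O(t^2), \qquad v \in \mathbb{S}^{n-1},
\]
with the $O(t^2)$ remainder uniform in $v$, since both $h_K$ and $f$ are continuous on the compact set $\mathbb{S}^{n-1}$. Because the pointwise derivative at $t=0$ of $h_t$ coincides with $\tilde f$, the Wulff shape argument behind Lemma \ref{lem:jxy} goes through identically to yield
\[
\left.\frac{d \rho_{[h_t]}(v)}{d t}\right|_{t=0} = \frac{\tilde f(\mathbf{g}_K(r_K(v)))}{h_K(\mathbf{g}_K(r_K(v)))}\rho_K(v) = f(\mathbf{g}_K(r_K(v)))\rho_K(v),
\]
for almost all $v \in \mathbb{S}^{n-1}$.

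Next, I would run the proof of Lemma \ref{lembfgsn} verbatim with $f$ replaced by $\tilde f$. Writing $\tau_{F,p}([h_t])$ in polar coordinates, applying dominated convergence to differentiate under the integral sign, and invoking Lemma \ref{lem:jxy1} together with \eqref{Eq:gdfs} gives
\[
\lim_{t\to 0}\frac{\tau_{F,p}([h_t])-\tau_{F,p}(K)}{t}=\int_{\mathbb{S}^{n-1}} f(v)\, h_K(v)\, dS_{F,p}(K,v).
\]

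Finally, I would convert the right-hand side into an integral against $\tau_{F,p}^{\rm log}(K,\cdot)$. Using the identity $\langle x, \mathbf{g}_K(x)\rangle = h_K(\mathbf{g}_K(x))$ for $\mathscr H^{n-1}$-a.e.\ $x \in \partial K$, the definition \eqref{Eq:logcd}, and the change of variables \eqref{Eq:gdfs}, one obtains
\[
\int_{\mathbb{S}^{n-1}} f(v)\, d\tau_{F,p}^{\rm log}(K,v) = \frac{p-1}{n(p-1)+p}\int_{\mathbb{S}^{n-1}} f(v)\, h_K(v)\, dS_{F,p}(K,v),
\]
which rearranges to the claimed factor $\tfrac{n(p-1)+p}{p-1}$.

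The only delicate point is the first step: justifying that the first-order derivative of $\rho_{[h_t]}$ at $t=0$ is unaffected by passing from the linear perturbation $h_K + t\tilde f$ (covered by Lemma \ref{lem:jxy}) to the exponential perturbation $h_K e^{tf}$. This is precisely where the uniform $O(t^2)$ control on $h_t - (h_K + t\tilde f)$ is needed, and everything else in the argument is a straightforward transcription of the computation already carried out in Lemma \ref{lembfgsn}.
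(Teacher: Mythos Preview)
Your proposal is correct and follows essentially the same approach as the paper: the paper's entire proof is the one-line observation that setting $h_t(v)=h_K(v)+t f(v) h_K(v)$ in Lemma~\ref{lembfgsn} immediately yields the result, i.e., apply Lemma~\ref{lembfgsn} with $\tilde f = f h_K$ and then use \eqref{Eq:logcd}. You are in fact more careful than the paper, since you explicitly address the passage from the exponential perturbation $h_K e^{tf}$ to the linear one $h_K + t f h_K$ via the uniform $O(t^2)$ remainder, a point the paper leaves implicit.
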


\section{A variational proof of Minkowski problem to anisotropic $p$-torsional measure}\label{SEC4}

In this section, we address the Minkowski problem associated with the anisotropic $p$-torsional measure $S_{F,p}(K, \cdot)$. 

The subsequent lemma demonstrates that the centroid of every anisotropic $p$-torsional measure $S_{F,p}(K, \cdot)$ coincides with the origin. This property is recognized as a necessary condition for a measure to be considered the anisotropic $p$-torsional measure of a convex body. 

\begin{lemma}\label{lembyxzm}
Let $1<p<\infty$ and $K$ be a convex body containing the origin in its interior, the anisotropic $p$-torsional measure $S_{F,p}(K, \cdot)$ satisfies
$$
\int_{\mathbb{S}^{n-1}} v d S_{F,p}(K, v)=0.
$$
\end{lemma}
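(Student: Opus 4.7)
The plan is to exploit translation invariance of $\tau_{F,p}$ (Proposition \ref{prop:apxz}(b)) together with the variational formula of Lemma \ref{lembfgsn}. For a fixed direction $e \in \mathbb{S}^{n-1}$, I take the test function $f(v) = \langle e, v \rangle$ and consider
$$h_t(v) = h_K(v) + t\, \langle e, v\rangle, \qquad v \in \mathbb{S}^{n-1}.$$
Since $K$ contains the origin in its interior, $h_K$ is bounded below by a positive constant on $\mathbb{S}^{n-1}$, so $h_t > 0$ for all sufficiently small $|t|$, and Lemma \ref{lembfgsn} applies.

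The key observation is that $h_t$ is the support function of the translated body $K + te$, so the Wulff shape satisfies $[h_t] = K + te$. Then translation invariance gives $\tau_{F,p}([h_t]) = \tau_{F,p}(K + te) = \tau_{F,p}(K)$ for every small $t$, hence
$$\lim_{t \to 0} \frac{\tau_{F,p}([h_t]) - \tau_{F,p}(K)}{t} = 0.$$
Combining this with the variational formula from Lemma \ref{lembfgsn}, we obtain
$$\int_{\mathbb{S}^{n-1}} \langle e, v\rangle \, dS_{F,p}(K, v) = 0.$$

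Since $e \in \mathbb{S}^{n-1}$ was arbitrary, this says that the vector $\int_{\mathbb{S}^{n-1}} v \, dS_{F,p}(K,v) \in \mathbb{R}^n$ is orthogonal to every unit vector, and is therefore zero. The argument is essentially a one-line application of the two preceding results, and the only mild technical point is verifying that $h_t$ remains a positive continuous support function on $\mathbb{S}^{n-1}$ for small $t$ and that $[h_t]$ coincides with $K + te$ — both are immediate from $0 \in \operatorname{int}(K)$ and the identity $h_{K + te}(v) = h_K(v) + t\langle e, v\rangle$. There is no real obstacle here; the entire content of the lemma is packaged into translation invariance plus the variational formula.
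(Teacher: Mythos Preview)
Your proof is correct. Both your argument and the paper's rest on translation invariance of $\tau_{F,p}$, but the executions differ. The paper works directly with the integral representation (\ref{Eq:AFNZg}): it writes
\[
\int_{\mathbb{S}^{n-1}} \lambda\langle u,v\rangle\, dS_{F,p}(K,v)
= \int_{\mathbb{S}^{n-1}} h_{K+\lambda u}(v)\, dS_{F,p}(K+\lambda u,v) - \int_{\mathbb{S}^{n-1}} h_K(v)\, dS_{F,p}(K,v),
\]
invoking translation invariance of the \emph{measure} $S_{F,p}$ (Proposition~\ref{prop:apxz}(c)) to swap $dS_{F,p}(K,\cdot)$ for $dS_{F,p}(K+\lambda u,\cdot)$, and then recognizes each integral as a multiple of $\tau_{F,p}$. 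You instead differentiate, applying the variational formula of Lemma~\ref{lembfgsn} to $f(v)=\langle e,v\rangle$ and noting $[h_t]=K+te$. Your route requires only translation invariance of the functional $\tau_{F,p}$, not of the measure, which is a mild economy; on the other hand, the paper's finite-difference computation avoids appealing to Lemma~\ref{lembfgsn}, whose proof is more involved. Either way the content is the same: translation invariance forces the first moment to vanish.
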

\begin{proof}
Let $u \in \mathbb{S}^{n-1}$ and $\lambda >0$, such that $K+\lambda u$ be a convex body containing the origin in its interior. From (\ref{Eq:AFNZg}) and ({\bf c}) and ({\bf b}) of Proposition \ref{prop:apxz}, we have
\begin{align*}
\int_{\mathbb{S}^{n-1}} \lambda u \cdot v d S_{F,p}(K, v) & =\int_{\mathbb{S}^{n-1}}\left(h(K,v)+\lambda u \cdot v-h(K,v)\right) d S_{F,p}(K, v)\\
& =\int_{\mathbb{S}^{n-1}} h(K+\lambda u, v) d S_{F,p}(K+\lambda u, v)-\int_{\mathbb{S}^{n-1}} h(K,v) d S_{F,p}(K, v) \\
& =\frac{n(p-1)+p}{p-1}\tau_{F,p}(K+\lambda u )-\frac{n(p-1)+p}{p-1}\tau_{F,p}\\
& =0.
\end{align*}
Since $u \in \mathbb{S}^{n-1}$ is arbitrary and $\lambda$ is positive, we obtain
$$
\int_{\mathbb{S}^{n-1}} v dS_{F,p}(K, v)=0.
$$
This completes the proof.
\end{proof}

\begin{lemma}\label{lembyxzm1}
Suppose $1<p<\infty$. If $\mu$ is a non-zero, finite Borel measure on $\mathbb{S}^{n-1}$, then there exists $a$ convex body $K \in \mathscr{K}^n$ so that
$$
S_{F,p}(K,\cdot)=\mu,
$$
then $\mu$ is not concentrated on any closed hemisphere of $\mathrm{S}^{n-1}$, and
\begin{align}\label{Eq:Cxbtj}
\int_{\mathbb{S}^{n-1}} v d \mu(v)=0.
\end{align}
\end{lemma}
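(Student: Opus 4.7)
The plan is to verify the two necessary conditions separately. The centroid identity \eqref{Eq:Cxbtj} is immediate: by the translation invariance of $S_{F,p}(K,\cdot)$ in Proposition \ref{prop:apxz}(c), we may assume without loss of generality that the origin lies in the interior of $K$, and then Lemma \ref{lembyxzm} applies directly to $\mu = S_{F,p}(K,\cdot)$ to give $\int_{\mathbb{S}^{n-1}} v\, d\mu(v) = 0$.

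For the non-concentration on closed hemispheres, I would proceed by contradiction. Suppose $\mathrm{supp}(\mu) \subset \{v \in \mathbb{S}^{n-1} : v \cdot \omega \geq 0\}$ for some direction $\omega \in \mathbb{S}^{n-1}$. Pairing the already-established centroid identity with $\omega$ yields
$$0 \;=\; \omega \cdot \int_{\mathbb{S}^{n-1}} v\, d\mu(v) \;=\; \int_{\mathbb{S}^{n-1}} (v \cdot \omega)\, d\mu(v),$$
whose integrand is non-negative on $\mathrm{supp}(\mu)$ and must therefore vanish $\mu$-almost everywhere. Consequently, $\mu$ is in fact concentrated on the equator $E_\omega := \{v \in \mathbb{S}^{n-1} : v \cdot \omega = 0\}$.

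The contradiction then follows by combining the absolute continuity of $S_{F,p}(K,\cdot)$ with respect to the surface area measure $S(K,\cdot)$ (with Radon--Nikodym density $F^p(\nabla u)$) with two facts. First, the classical Minkowski-theoretic result that the surface area measure of a convex body with non-empty interior is not concentrated on any great subsphere gives $S(K, \mathbb{S}^{n-1} \setminus E_\omega) > 0$, so the preimage $\mathbf{g}_K^{-1}(\mathbb{S}^{n-1} \setminus E_\omega) \subset \partial K$ has positive $\mathscr{H}^{n-1}$-measure. Second, the strict positivity of $|\nabla u|$ on $\partial K$, together with the coercivity $F(\xi) \geq a|\xi|$ from \eqref{Eq:Fyjd}, ensures that $F^p(\nabla u) > 0$ $\mathscr{H}^{n-1}$-almost everywhere on that preimage. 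This forces $\mu(\mathbb{S}^{n-1} \setminus E_\omega) > 0$, contradicting the equator concentration just derived.

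The main obstacle is establishing the second fact—namely $|\nabla u| > 0$ $\mathscr{H}^{n-1}$-almost everywhere on $\partial K$—for a general convex body $K \in \mathscr{K}^n$, since the quantitative bound $c \leq |\nabla u| \leq 1/c$ in Lemma \ref{lemia} is only proved for $C^{2,\alpha}$ domains. I plan to handle this by approximating $K$ in the Hausdorff metric by a sequence of $C^{2,\alpha}$ convex bodies $K_i \to K$, applying Lemma \ref{lemia} on each $K_i$ to get the pointwise lower bound on $|\nabla u_i|$, and then invoking the $C^1$-convergence $u_i \to u$ on $\bar{K}_i$ from Lemma \ref{lemyzyj} to transfer the non-degeneracy of the normal derivative to the limit solution on $\partial K$, where Alexandrov's theorem permits us to neglect the $\mathscr{H}^{n-1}$-null set of non-regular boundary points.
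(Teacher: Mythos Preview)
Your treatment of the centroid identity coincides with the paper's: both reduce to Lemma~\ref{lembyxzm} after invoking the translation invariance in Proposition~\ref{prop:apxz}(c).

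For non-concentration, the paper argues much more directly: it simply cites that the surface area measure $S(K,\cdot)$ of a convex body is never concentrated on a closed hemisphere, notes from \eqref{Eq:AMBJCD} that $S_{F,p}(K,\cdot)$ is absolutely continuous with respect to $S(K,\cdot)$, and concludes. Your reduction via the centroid identity to equator concentration, followed by a density-positivity argument, is more circuitous but also more scrupulous---absolute continuity alone does not automatically transfer non-concentration from $S(K,\cdot)$ to $S_{F,p}(K,\cdot)$ without knowing that the Radon--Nikodym density $F^p(\nabla u)$ is positive $S(K,\cdot)$-almost everywhere, a point the paper leaves implicit.

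That said, your proposed approximation step to secure this positivity has a genuine gap. Lemma~\ref{lemyzyj} hypothesizes that \emph{both} $K$ and the approximants $K_i$ have $C^{2,\alpha}$ boundary, so it cannot be invoked when the limit $K$ is an arbitrary convex body in $\mathscr{K}^n$; the $C^1$-convergence conclusion you plan to rely on is simply unavailable. Moreover, even if you approximate by smooth $K_i$, the constant $c = c(K_i)$ in Lemma~\ref{lemia} is not shown to be uniform in $i$, so the lower gradient bound may degenerate in the limit. What would actually close your argument is a Hopf-type boundary lemma for the Finsler $p$-Laplacian valid directly in convex (not necessarily smooth) domains, giving $|\nabla u| > 0$ $\mathscr{H}^{n-1}$-a.e.\ on $\partial K$; the $C^1$-stability route you sketch does not get there with the tools the paper provides.
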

\begin{proof}
Since the surface area measure of a convex body containing the origin in its interior cannot be concentrated in a closed hemisphere of $\mathbb{S}^{n-1}$ (see \cite[Proposition 7.3.1]{BMT2024}), from (\ref{Eq:AMBJCD}), we knows that the anisotropic $p$-torsional measure, $S_{F,p}(K,\cdot)$, of $K$ is absolutely continuous with respect to the surface area measure $S(K,\cdot)$ of $K$. Thus the anisotropic $p$-torsional measure $S_{F,p}(K,\cdot)$, of $K$ cannot be concentrated in a closed hemisphere of $\mathbb{S}^{n-1}$. Combined with Lemma \ref{lembyxzm}, by $S_{F,p}(K,\cdot)=\mu$,
then $\mu$ is not concentrated on any closed hemisphere of $\mathrm{S}^{n-1}$ and $\int_{\mathbb{S}^{n-1}} v d \mu(v)=0$.
\end{proof}

For a finite Borel measure $\mu$ on $\mathbb{S}^{n-1}$, and $g \in C^{+}(\mathbb{S}^{n-1})$, the positive continuous function on $\mathbb{S}^{n-1}$, let
$$
\|g: \mu\|=\int_{\mathbb{S}^{n-1}} g(v) d \mu(v).
$$

Define the functional $\Psi_{F,p, \mu}: \mathscr{K}_o^n \rightarrow \mathbb{R}$, by
$$
\Psi_{F,p, \mu}(K)=\frac{\|h_K: \mu\|}{\tau_{F,p}\left(K\right)^{\frac{p-1}{np+p-n}}}.
$$

The following lemma reduces the Minkowski problem for the anisotropic $p$-torsional measure to be a minimization of a functional on convex bodies.

\begin{lemma}\label{lemzyhwt}
 Suppose $1<p<\infty$, and $\mu$ is a nonzero finite Borel measure on $\mathbb{S}^{n-1}$. If the minimization problem
$$
\inf \left\{ \Psi_{F,p, \mu}(K): K \in \mathscr{K}_o^n\right\},
$$
has a solution in $\mathscr{K}_o^n$, then there exists a body $K \in \mathscr{K}_o^n$ so that
$$
S_{F,p}(K,\cdot)=\mu.
$$
\end{lemma}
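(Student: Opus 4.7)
The plan is to follow the classical variational approach: assume $K_0 \in \mathscr{K}_o^n$ is a minimizer of $\Psi_{F,p,\mu}$, perturb its support function, invoke the Wulff-shape inequality, and extract an Euler--Lagrange condition that identifies $\mu$, up to a rescaling, with $S_{F,p}(K_0,\cdot)$. Concretely, for an arbitrary $f \in C(\mathbb{S}^{n-1})$ and $|t|$ sufficiently small, I set $h_t = h_{K_0} + tf$, which remains positive on $\mathbb{S}^{n-1}$ because $h_{K_0}$ is bounded below away from $0$, and then put $K_t = [h_t]$. By the definition of the Wulff shape, $h_{K_t} \leq h_t$ pointwise on $\mathbb{S}^{n-1}$, and Lemma \ref{lem:jxy} ensures that $K_t$ is close to $K_0$ in the Hausdorff sense as $t \to 0$, so $K_t \in \mathscr{K}_o^n$ for small $t$.

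Introduce the auxiliary function
$$
\Phi(t) = \frac{\int_{\mathbb{S}^{n-1}} h_t\, d\mu}{\tau_{F,p}(K_t)^{\frac{p-1}{np+p-n}}}.
$$
The Wulff inequality yields $\Phi(t) \geq \Psi_{F,p,\mu}(K_t)$ for every such $t$, while $\Phi(0) = \Psi_{F,p,\mu}(K_0)$ since $[h_0] = K_0$. The minimality of $K_0$ therefore forces $t = 0$ to be a local minimum of $\Phi$, so $\Phi'(0) = 0$. The numerator differentiates trivially to $\int_{\mathbb{S}^{n-1}} f\, d\mu$, and the derivative of the denominator is handled by Lemma \ref{lembfgsn}, which yields
$$
\frac{d}{dt}\tau_{F,p}(K_t)\bigg|_{t=0} = \int_{\mathbb{S}^{n-1}} f\, dS_{F,p}(K_0, \cdot).
$$
Combining these, and using the identity $\int_{\mathbb{S}^{n-1}} h_{K_0}\, dS_{F,p}(K_0,\cdot) = \frac{n(p-1)+p}{p-1}\tau_{F,p}(K_0)$ from (\ref{Eq:AFNZg}) to simplify the factor in front of the derivative, I expect to obtain an Euler--Lagrange relation of the form
$$
\int_{\mathbb{S}^{n-1}} f\, d\mu = c \int_{\mathbb{S}^{n-1}} f\, dS_{F,p}(K_0, \cdot)
$$
for a single positive constant $c$ independent of $f$. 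Since $f \in C(\mathbb{S}^{n-1})$ is arbitrary, it follows that $\mu = c\, S_{F,p}(K_0, \cdot)$ as Borel measures on $\mathbb{S}^{n-1}$.

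To finish, I rescale. A direct computation combining Proposition \ref{prop:apxz}(a), the explicit scaling $v(y) = \lambda^{p/(p-1)} u(y/\lambda)$ from (\ref{Eq:uv28qwe}), and the $\lambda^{n-1}$ factor of the $(n-1)$-dimensional Hausdorff measure shows that
$$
S_{F,p}(\lambda K_0, \cdot) = \lambda^{\frac{np-n+1}{p-1}}\, S_{F,p}(K_0, \cdot).
$$
Setting $\lambda = c^{(p-1)/(np-n+1)}$ and $K = \lambda K_0 \in \mathscr{K}_o^n$ then yields $S_{F,p}(K, \cdot) = \mu$, completing the proof.

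The main technical point is the Wulff-shape step: it is what makes differentiating the functional along the non-smooth moving family $K_t$ tractable, by replacing the implicit quantity $h_{K_t}$ in the numerator with the manifestly affine-in-$t$ function $h_t$ while preserving the inequality in the correct direction at $t=0$. Everything else---the derivative of $\tau_{F,p}$, the identification of the constant $c$, and the final rescaling---reduces to invoking Lemma \ref{lembfgsn} and the homogeneity properties already established in Proposition \ref{prop:apxz}.
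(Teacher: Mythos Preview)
Your proposal is correct and follows essentially the same variational argument as the paper's proof: perturb via the Wulff shape, use $h_{K_t}\le h_t$ to see that the auxiliary quotient $\Phi$ is minimized at $t=0$, differentiate using Lemma~\ref{lembfgsn}, and rescale. One minor note: the identity from (\ref{Eq:AFNZg}) that you invoke is not actually needed or applicable at that step---the constant $c$ already emerges as $\tfrac{p-1}{np+p-n}\cdot\tfrac{\|h_{K_0}:\mu\|}{\tau_{F,p}(K_0)}$ directly from the quotient-rule computation, and since the numerator involves $\int h_{K_0}\,d\mu$ rather than $\int h_{K_0}\,dS_{F,p}(K_0,\cdot)$, (\ref{Eq:AFNZg}) does not simplify it further.
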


\begin{proof}
By assumption, the minimization problem has a solution $K_0 \in \mathscr{K}_o^n$. The origin being in the interior of $K_0$ implies $h_{K_0}>0$.

Suppose $f \in C^+(\mathbb{S}^{n-1})$. Then $h_t=h_{K_0}+tf>0$, whenever $|t|<\delta$, for some sufficiently small $\delta>0$. Obviously, the function $t \mapsto \Psi_{F,p, \mu}(K_t)$ attains its minimum at $t=0$. Note that $t \mapsto \Psi_{F,p, \mu}(K_t)$ may not be differentiable at $t=0$ (because $t \mapsto\left\|h_{K_t}: \mu\right\|$ may not be differentiable at $t=0$ ). Let
$$
\psi(t)=\Psi_{F,p, \mu}(K_t)=\frac{\|h_t: \mu\|}{\tau_{F,p}\left(K_t\right)^{\frac{p-1}{np+p-n}}}.
$$

Since the body $K_t$ is the Wulff shape of the function $h_t$ and the measure $\mu$ is non-negative it follows that $\left\|h_{K_t}: \mu\right\| \leq\left\|h_t: \mu\right\|$, and 
$$
\psi(t) = \Psi_{F,p, \mu}(K_t) \geq \Psi_{F,p, \mu}(K_0)=\psi(0).
$$
Thus, $\psi(t)$ attains minimum at $t=0$. By Lemma \ref{lembfgsn}, the function $t \mapsto \tau_{F,p}\left(K_t\right)$ is differentiable at $t=0$. Of course, $t \mapsto\left\|h_t: \mu\right\|$ is obviously differentiable at $t=0$. Therefore,
$$
\begin{aligned}
0 & =\left.\frac{d}{d t} \psi(t)\right|_{t=0} \\
& =\frac{\int_{\mathbb{S}^{n-1}} f(v) d \mu(v)}{\tau_{F,p}\left(K_0\right)^{\frac{p-1}{np+p-n}}}-\frac{\int_{\mathbb{S}^{n-1}} h_{K_0} d \mu(v)\frac{p-1}{np+p-n}\tau_{F,p}\left(K_0\right)^{\frac{p-1}{np+p-n}-1}\frac{d}{d t}\tau_{F,p}(K_0)}{\bigg(\tau_{F,p}\left(K_0\right)^{\frac{p-1}{np+p-n}}\bigg)^2}\\
&=\frac{\int_{\mathbb{S}^{n-1}} f(v) d \mu(v)}{\tau_{F,p}\left(K_0\right)^{\frac{p-1}{np+p-n}}}-\frac{\int_{\mathbb{S}^{n-1}} h_{K_0} d \mu(v)\frac{p-1}{np+p-n}\tau_{F,p}\left(K_0\right)^{\frac{p-1}{np+p-n}-1}\int_{\mathbb{S}^{n-1}} f(v) d S_{F,p}(K_0,v)}{\bigg(\tau_{F,p}\left(K_0\right)^{\frac{p-1}{np+p-n}}\bigg)^2}.
\end{aligned}
$$
Since this hold for all $f\in C(\mathbb{S}^{n-1})$, we conclude that
$$
\frac{p-1}{np+p-n} \frac{\left\|h_{K_0}: \mu\right\|}{\tau_{F,p}(K_0)}S_{F,p}(K_0,\cdot)=\mu.
$$

Now, letting $K=\lambda K_0$ with $\lambda^{\frac{np-n+1}{p-1}}=\frac{p-1}{np+p-n} \frac{\left\|h_{K_0}: \mu\right\|}{\tau_{F,p}(K_0)}S_{F,p}(K_0,\cdot)$ gives the desired body.
\end{proof}

\begin{proof}[Proof of Theorem \ref{THm:jdMwt}]
 The necessity of the condition is established in Lemma \ref{lembyxzm1}. We proceed to establish sufficiency. We shall demonstrate that the minximization problem
$$
\inf \left\{ \Psi_{F,p, \mu}(K): K \in \mathscr{K}_o^n\right\},
$$
has a solution in $\mathscr{K}_o^n$. To this end, suppose $K_i$ is a minimizing sequence for the functional $\Psi_{F,p, \mu} : \mathscr{K}_o^n \rightarrow \mathbb{R}$.

Since the measure $\mu$ satisfies the condition (\ref{Eq:Cxbtj}), it follows that $K \mapsto\left\|h_K: \mu\right\|$ is a translation invariant function. Since $\tau_{F,p} (K)$ is translation invariant, we now see that $\Psi_{F,p, \mu}$ is translation invariant as well. We therefore may assume that the centroid of each of the $K_i$ is at the origin.

Since $\Psi_{F,p, \mu}$ is homogeneous of degree 0, we may rescale the bodies in our minimizing sequence so that $\tau_{F,p}\left(K_i\right)=1$, for all $i$. So, $K_i$ is a minimizing sequence for the problem,
$$
\inf \left\{\left\|h_K: \mu\right\|: \tau_{F,p}(K)=1, K \in \mathscr{K}_o^n\right\}.
$$

Now we show that the sequence of $\left\{K_i\right\}$ is bounded. Since $K_i$ is a minimization sequence, we can assume that the ball $r\mathbb{B}^{n}$ centered at the origin and whose radius $r$ is such that $\tau_{F,p}(r\mathbb{B}^{n})=1$, we have
$$
\int_{\mathbb{S}^{n-1}}h_{K_j}(v) d \mu(v) \leq\int_{\mathbb{S}^{n-1}} h_{r\mathbb{B}^{n}}(v) d \mu(v)=r\mu(\mathbb{S}^{n-1}).
$$
Let $R_{K_j}$ be the maximal radius of $K_j$. If $x_j\in \mathbb{S}^{n-1}$ is the direction of this radius, then
$R_{K_j}\left(x_j \cdot v\right)_{+} \leq h_{K_j}(v)$, for all $v \in \mathbb{S}^{n-1}$. We have
$$
\begin{aligned}
R_{K_j} \int_{\mathbb{S}^{n-1}}\left(x_j \cdot v\right)_{+} d \mu(v) & \leq \int_{\mathbb{S}^{n-1}} h_{K_j}(v) d \mu(v)=r\mu(\mathbb{S}^{n-1}).
\end{aligned}
$$

Since $\mu$ is not concentrated on a closed hemisphere, there exists a constant $a>0$, so that
$$
\int_{\mathbb{S}^{n-1}}(x \cdot v)_{+} d \mu(v) \geq a,
$$
for all $v \in \mathbb{S}^{n-1}$. Thus,
$$
R_{K_j} \leq \frac{r\mu(\mathbb{S}^{n-1})}{a}.
$$
 Hence the sequence $K_i$ is bounded. By the Blaschke selection theorem \cite[Theorem 1.8.7]{SRA2014}, there exists a subsequence, which we again denote as $K_i$, that converges to a compact convex set $K_0$. The continuity of $\tau_{F,p}$ assures that $\tau_{F,p}\left(K_0\right)=1$.

Since $u$ be the solution to (\ref{Eq:patr}), by (\ref{Eq:AFNZg}) and (\ref{Eq:Fyjd}), there is a constant $b>0$ such that
\begin{align*}
1=&\tau_{F,p}\left(K_0\right)\\
=&\frac{p-1}{n(p-1)+p} \int_{\mathbb{S}^{n-1}} h_{K_0}(\xi) F^p(\nabla u(\mathbf{g}^{-1}_{K_0}(\xi)))d S(K_0,\xi) \\
\leq&b^p\frac{p-1}{n(p-1)+p} \int_{\mathbb{S}^{n-1}} h_{K_0}(\xi) |\nabla u(\mathbf{g}^{-1}_{K_0}(\xi))|^pd S(K_0,\xi).
\end{align*}
On the other hand, because of $K_0 \subset \mathbb{R}^n$ be a bounded domain with the boundary of class $C^{2, \alpha}$, by  Lemma \ref{lemia} there exists a unique solution $u$ to Problem (\ref{Eq:patr}) with $u \in C^{1, \alpha}(K_0)$ and  $|\nabla u(\mathbf{g}^{-1}_{K_0}(\xi))|$ is bounded, there is a constant $c_1>0$ such that
$$
1 \leq (bc)^p \frac{n(p-1)}{n(p-1)+p}|K_0| \Rightarrow |K_0| \geq \frac{1}{(bc_1)^p}\frac{n(p-1)+p}{n(p-1)}>0.
$$
Thus, $K_0 \in \mathscr{K}_o^n$. Since $K_i$ is a minimizing sequence for $\Psi_{F,p, \mu}$, the body $K_0$ is a minimizer of $\Psi_{F,p, \mu}$, and Lemma \ref{lemzyhwt} now gives the desired result.

\end{proof}

\section{The log Minkowski problem of anisotropic $p$-torsional measure}\label{SEC5}

In this section, inspired by the approaches of Zhu \cite{ZGX2014}, Guo, Xi and Zhao \cite{GLJ2024}, and Hu \cite{HJR20241} to the log-Minkowski problem, this section considers the log-Minkowski problem of anisotropic $p$-torsional rigidity. First, we prove the existence of solutions to the log-Minkowski problem of anisotropic $p$-torsional rigidity in the case of discrete measures whose supports are in general position.

\subsection{The discrete case}

Let $N \geq n+1$, and let $\{u_1, \dots, u_N\} \subset \mathbb{S}^{n-1}$ be in general position and not concentrated on a closed hemisphere. Given positive real numbers $\alpha_1, \dots, \alpha_N$, define for $P \in \mathcal{P}(u_1, \dots, u_N)$ and $\mu = \sum_{i=1}^N \alpha_i \delta_{u_i}$ the functional
\begin{align}\label{Eq:Fdhs}
\Psi_{\mu, P}(\eta)=\sum_{i=1}^N \alpha_i \log \left(h_P\left(u_i\right)-\eta \cdot u_i\right).
\end{align}

Note that an extremal problem of the functional $\Psi_{\mu, P}$ subject to a volume constraint has served as a common approach for formulating the logarithmic Minkowski problem (see \cite{ZGX2014}).  However, in this paper we consider the extreme problem
\begin{align}\label{eq:IMEP}
\inf \left\{\max _{\eta \in \operatorname{Int}(K)} \Psi_{\mu, K}(\eta): K \in \mathcal{P}_N\left(u_1, \ldots, u_N\right) \text { and } \tau_{F,p}(K)=1\right\}.
\end{align} 

Below, we will show that the minimizer of the problem (\ref{eq:IMEP}) solves the discrete log Minkowski problem for anisotropic $p$-torsional measure. Before proving this, we first introduce some known conclusions about the function $\Psi_{\mu, P}$.

\begin{lemma}[\cite{ZGX2014,LEE2021}]\label{lem:zgx}
Suppose that $\left\{u_1, \ldots, u_N\right\} \subset \mathbb{S}^{n-1}$ is in general position and not concentrated on a closed hemisphere. Let $\alpha_1, \ldots, \alpha_N$ be $N$ positive real numbers and define $\mu=\sum_{i=1}^N \alpha_i \delta_{u_i}$. If $P \in \mathcal{P}\left(u_1, \ldots, u_N\right)$, then there exists a unique point $\eta(P) \in \operatorname{Int}(P)$ such that
\begin{align}\label{eq:zgx1}
\Psi_{\mu, P}(\eta(P))=\max _{\eta \in \operatorname{Int}(P)} \Psi_{\mu, P}(\eta).
\end{align}
Moreover, if a sequence of polytopes $P_i \in \mathcal{P}\left(u_1, \ldots, u_N\right)$ converges to $P$ with respect to the Hausdorff metric, then
$$
\lim _{i \rightarrow \infty} \eta\left(P_i\right)=\eta(P),
$$
and
$$
\lim _{i \rightarrow \infty} \Psi_{\mu, P_i}\left(\eta\left(P_i\right)\right)=\Psi_{\mu, P}(\eta(P)).
$$
\end{lemma}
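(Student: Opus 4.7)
The plan is first to prove existence and uniqueness of $\eta(P)$ from strict concavity of $\Psi_{\mu, P}$ combined with a boundary coercivity, and then to deduce continuity by a standard uniqueness-plus-compactness argument. For $\eta \in \operatorname{Int}(P)$, each $h_P(u_i) - \eta \cdot u_i > 0$, since $P$ is compact and $h_P(u_i)$ is attained on $\partial P$. A direct computation gives
$$
\operatorname{Hess}\Psi_{\mu, P}(\eta) = -\sum_{i=1}^N \frac{\alpha_i\, u_i u_i^{\top}}{(h_P(u_i) - \eta \cdot u_i)^2}.
$$
Since $\{u_1, \dots, u_N\}$ is not contained in any closed hemisphere, it positively spans $\mathbb{R}^n$; combined with $\alpha_i > 0$, the above Hessian is strictly negative definite. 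Hence $\Psi_{\mu, P}$ is strictly concave, and any maximizer is unique. For existence, note that every face of $P \in \mathcal{P}(u_1, \dots, u_N)$ has outer unit normal drawn from $\{u_1, \dots, u_N\}$, so $\eta \to \partial P$ forces $h_P(u_{i_0}) - \eta \cdot u_{i_0} \to 0^+$ for some index $i_0$, driving the corresponding logarithm to $-\infty$ while the remaining terms stay bounded above by constants depending only on $P$. Continuity on $\operatorname{Int}(P)$ together with this boundary coercivity then produces a (necessarily unique) interior maximizer $\eta(P)$.

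For the continuity statements I would proceed as follows. Assume $P_i \to P$ in the Hausdorff metric with all $P_i, P \in \mathcal{P}(u_1, \dots, u_N)$. Support functions converge uniformly on $\mathbb{S}^{n-1}$, so $h_{P_i}(u_j) \to h_P(u_j)$ for each $j$. Since $\eta(P) \in \operatorname{Int}(P)$, one has $\eta(P) \in \operatorname{Int}(P_i)$ for all sufficiently large $i$, and therefore
$$
\Psi_{\mu, P_i}(\eta(P_i)) \;\geq\; \Psi_{\mu, P_i}(\eta(P)) \;\longrightarrow\; \Psi_{\mu, P}(\eta(P)).
$$
As $P_i \to P$, the sequence $\{\eta(P_i)\}$ is bounded, so any subsequential limit $\eta^{*}$ lies in $\overline{P}$, and $\eta^{*}$ cannot lie on $\partial P$ (otherwise $\Psi_{\mu, P_i}(\eta(P_i)) \to -\infty$, contradicting the above lower bound). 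Hence $\eta^{*} \in \operatorname{Int}(P)$, and passing to the limit in $\Psi_{\mu, P_i}(\eta(P_i)) \geq \Psi_{\mu, P_i}(\eta)$ for every fixed $\eta \in \operatorname{Int}(P)$ yields $\Psi_{\mu, P}(\eta^{*}) \geq \Psi_{\mu, P}(\eta(P))$. Uniqueness of the maximizer forces $\eta^{*} = \eta(P)$, so the entire sequence converges and $\Psi_{\mu, P_i}(\eta(P_i)) \to \Psi_{\mu, P}(\eta(P))$.

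The main delicate point is ensuring that the maximizers $\eta(P_i)$ do not drift toward $\partial P$ even as the domains $P_i$ themselves deform. This is controlled by the lower bound obtained by inserting the fixed test point $\eta(P)$ into $\Psi_{\mu, P_i}$, a device which depends crucially on $\eta(P)$ lying strictly inside $P_i$ for all large $i$. Beyond this boundary-behavior subtlety, the argument is purely convex-analytic and uses no features of the anisotropic $p$-torsional rigidity.
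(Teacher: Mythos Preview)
Your argument is correct and self-contained: strict concavity of $\Psi_{\mu,P}$ on $\operatorname{Int}(P)$ (from the negative-definite Hessian, using only that the $u_i$ span $\mathbb{R}^n$) together with the boundary blow-up (each boundary point of a polytope in $\mathcal{P}(u_1,\dots,u_N)$ activates one of the constraints $\eta\cdot u_{i_0}=h_P(u_{i_0})$) gives existence and uniqueness, and your compactness-plus-uniqueness scheme for the continuity is the standard one. Note, incidentally, that your proof never invokes the ``general position'' hypothesis; it is not needed here and is carried only for use later in the paper.

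The paper itself does not prove this lemma: it is quoted from Zhu~\cite{ZGX2014} and Kim--Lee~\cite{LEE2021}. Your proof is essentially the argument one finds in those references, so there is no methodological divergence to discuss.
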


Let us now prove that a minimizer of the extreme problem (\ref{eq:IMEP}) solves the discrete log Minkowski problem for anisotropic $p$-torsional measure.

\begin{lemma}[\cite{GLJ2024}]\label{lempwj}
Let $u_1, \ldots, u_N$ be $N$ unit vectors that are not contained in any closed hemisphere, and $P_i$ be a sequence of polytopes in $\mathcal{P}(u_1, \ldots, u_N)$. Assume the vectors $u_1, \ldots, u_N$ are in general position in dimension $n$. If outer radius $R_i$ of $P_i$ is not uniformly bounded in $i$, then its inner radius $r_i$ is not uniformly bounded in $i$ either.
\end{lemma}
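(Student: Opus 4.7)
The plan is to argue the contrapositive: assuming $r_i$ is uniformly bounded, I will show $R_i$ must be uniformly bounded as well. Equivalently, suppose $R_i \to \infty$ along a subsequence, and I would produce a further subsequence along which $r_i \to \infty$.

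First I would rescale. Let $c_i$ denote the circumcenter of $P_i$ and set $\tilde P_i = (P_i - c_i)/R_i$, so that $\tilde P_i$ has circumradius $1$ and $\tilde P_i \subset \mathbb{B}^{n}$. Each $\tilde P_i$ is still a polytope in $\mathcal{P}(u_1, \dots, u_N)$, say $\tilde P_i = \bigcap_{j=1}^N \{x : x \cdot u_j \le \tilde a_j^i\}$. After reducing each constraint to $\tilde a_j^i = h_{\tilde P_i}(u_j)$ (which does not change $\tilde P_i$), we have $\tilde a_j^i \in [-1, 1]$, and extracting subsequences I may assume $\tilde a_j^i \to \tilde a_j^\infty \in [-1, 1]$ for every $j$. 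By the Blaschke selection theorem, along a further subsequence $\tilde P_i \to P_\infty$ in the Hausdorff metric, where $P_\infty \subset \mathbb{B}^{n}$ is a nonempty compact convex set of circumradius $1$; in particular $P_\infty$ contains a point of unit norm.

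If $P_\infty$ has nonempty interior, then for $i$ large $\tilde P_i$ contains a ball of some fixed radius $\delta > 0$, giving $r_i/R_i \ge \delta$, and hence $r_i \ge \delta R_i \to \infty$, which is exactly the conclusion I want. Thus the heart of the matter is to rule out the degenerate possibility that $P_\infty$ lies in a proper affine hyperplane $H = \{x : x \cdot w = c\}$ for some unit vector $w$.

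This is where general position is essential, and I expect it to be the main obstacle. By general position, every vertex of $\tilde P_i$ is the unique intersection of exactly $n$ facets, indexed by some $n$-subset $J \subset \{1, \dots, N\}$ whose normals $u_{j_1}, \dots, u_{j_n}$ are linearly independent. After further subsequences, the combinatorial type of $\tilde P_i$ stabilizes, and each vertex $v_J^i$, depending linearly on $(\tilde a_j^i)_{j \in J}$, converges to a vertex $v_J^\infty$ of $P_\infty$. Assuming $P_\infty \subset H$, every $v_J^\infty$ lies in $H$. The tangent cone at $v_J^\infty$, namely $v_J^\infty + \{y : y \cdot u_{j_k} \le 0,\ k = 1, \dots, n\}$, is a full $n$-dimensional cone of positive solid angle. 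I would argue that a nontrivial portion of this cone lies in $P_\infty$ by analyzing which of the remaining constraints $\{x \cdot u_\ell \le \tilde a_\ell^\infty\}$ is active at $v_J^\infty$: since the $u_j$ are not contained in any closed hemisphere, both $J_+ = \{j : u_j \cdot w > 0\}$ and $J_- = \{j : u_j \cdot w < 0\}$ are nonempty, and by general position at most $n - 1$ of the $u_j$ can lie in $w^\perp$. A short case analysis at the apex $v_J^\infty$ should then show that the tangent cone extends transversely to $H$ within $P_\infty$, contradicting $P_\infty \subset H$.
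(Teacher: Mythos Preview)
The paper does not prove this lemma; it merely cites \cite{GLJ2024}. So there is no proof in the paper to compare against, and I will simply assess your argument on its own.

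Your overall strategy---rescale to unit circumradius, pass to a Hausdorff limit $P_\infty$, and show $P_\infty$ cannot be degenerate---is the standard one and is correct. The gap is entirely in the final step.

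Your plan is to study a vertex $v_J^\infty$ obtained as the limit of vertices $v_J^i$ and to use that the simplicial cone $\{y:u_{j_k}\cdot y\le 0,\ j_k\in J\}$ is full-dimensional. The problem is that this cone is not the tangent cone of $P_\infty$ at $v_J^\infty$. As $i\to\infty$, constraints with indices outside $J$ may become active at $v_J^\infty$, and it is precisely this phenomenon that allows $P_\infty$ to collapse. Your remarks about $J_+$, $J_-$, and ``at most $n-1$ of the $u_j$ lie in $w^\perp$'' do not lead to a contradiction; in fact the last observation points at the wrong subspace (you want to count normals lying in the one-dimensional $V^\perp=\mathbb{R}w$, not in the $(n-1)$-dimensional $w^\perp$). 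The promised ``short case analysis'' is the entire content of the lemma and is not supplied.

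Here is how the degeneracy is actually excluded. Since $\tilde P_i\to P_\infty$ and $\tilde a_j^i\to \tilde a_j^\infty$, one checks $P_\infty=\bigcap_j\{x:x\cdot u_j\le \tilde a_j^\infty\}$. Suppose $\dim P_\infty=d$ with $1\le d\le n-1$ (here $d\ge 1$ because the circumradius of $P_\infty$ is $1$), and let $V$ be the direction space of $\operatorname{aff}(P_\infty)$. Pick $x_0$ in the \emph{relative interior} of $P_\infty$ and let $J_0=\{j:u_j\cdot x_0=\tilde a_j^\infty\}$ be the active set there. Near $x_0$ the set $P_\infty-x_0$ coincides with the cone $C_0=\{y:u_j\cdot y\le 0,\ j\in J_0\}$, and since $x_0$ is relatively interior this forces $C_0=V$. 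Hence every $u_j$ with $j\in J_0$ lies in $V^\perp$, and moreover $\{u_j:j\in J_0\}$ must positively span $V^\perp$ (otherwise $C_0$ would strictly contain $V$). Now general position gives $|J_0|\le \dim V^\perp=n-d$, because any $n-d+1\le n$ of the $u_j$ are linearly independent and so cannot all lie in an $(n-d)$-dimensional subspace. On the other hand, positively spanning an $(n-d)$-dimensional space requires at least $n-d+1$ generators. This contradiction rules out $1\le d\le n-1$, and your argument then finishes as you wrote.
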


\begin{corollary}\label{corpwj} Let $u_1, \ldots, u_N \in \mathbb{S}^{n-1}$ be $N$ unit vectors that are not contained in any closed hemisphere and $P \in \mathcal{P}(u_1, \ldots, u_N)$. Assume that $u_1, \ldots, u_N$ are in general position in dimension $n$. If the outer radius $R_i$ of $P_i$ is not uniformly bounded, then the anisotropic $p$-torsional rigidity $\tau_{F,p}(P_i)$ is also unbounded.
\end{corollary}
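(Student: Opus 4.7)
The plan is to reduce the statement to the unbounded-inner-radius case via Lemma \ref{lempwj} and then exploit the monotonicity and homogeneity of $\tau_{F,p}$ established in Proposition \ref{prop:apxz}. Since $\{R_i\}$ is not uniformly bounded, pass to a subsequence (still denoted $P_i$) with $R_i \to \infty$. By Lemma \ref{lempwj}, applied to this subsequence, the inner radii $r_i$ are also not uniformly bounded, so a further subsequence (again not relabelled) satisfies $r_i \to \infty$.

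For each $i$, fix a point $x_i \in \operatorname{Int}(P_i)$ such that the inscribed ball $B(x_i, r_i) \subseteq P_i$. By the translation invariance part (\textbf{b}) of Proposition \ref{prop:apxz}, we have $\tau_{F,p}(P_i) = \tau_{F,p}(P_i - x_i)$, and $P_i - x_i \supseteq r_i \mathbb{B}^n$. The monotonicity part (\textbf{d}) then yields
$$
\tau_{F,p}(P_i) = \tau_{F,p}(P_i - x_i) \geq \tau_{F,p}(r_i \mathbb{B}^n),
$$
and the homogeneity part (\textbf{a}) gives
$$
\tau_{F,p}(r_i \mathbb{B}^n) = r_i^{\,n + \frac{p}{p-1}}\, \tau_{F,p}(\mathbb{B}^n).
$$
Since $\mathbb{B}^n$ is a bounded open set, the unique solution of \eqref{Eq:patr} on $\mathbb{B}^n$ is strictly positive in the interior, whence $\tau_{F,p}(\mathbb{B}^n) > 0$ by the first equality in \eqref{Eq:FPTR}. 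Combining the two displays with $r_i \to \infty$ forces $\tau_{F,p}(P_i) \to \infty$ along the chosen subsequence, which contradicts uniform boundedness of $\{\tau_{F,p}(P_i)\}$.

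There is no substantive obstacle here: the argument is essentially a direct composition of Lemma \ref{lempwj} with the three structural properties (translation invariance, monotonicity, and homogeneity) of $\tau_{F,p}$ already proved in Proposition \ref{prop:apxz}. The only delicate point worth mentioning is the legitimacy of the double subsequence extraction, which causes no loss since the conclusion ``$\tau_{F,p}(P_i)$ is not uniformly bounded'' is invariant under passage to subsequences.
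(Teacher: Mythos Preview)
Your proof is correct and follows essentially the same route as the paper's: reduce to unbounded inner radii via Lemma \ref{lempwj}, then invoke translation invariance, monotonicity, and homogeneity from Proposition \ref{prop:apxz} to compare with a dilated ball. The only cosmetic difference is that the paper justifies $\tau_{F,p}(\mathbb{B}^n)>0$ by appealing to the lower bound in Lemma \ref{lemxjgj}, whereas you argue via positivity of the solution and \eqref{Eq:FPTR}; either works, and in fact positivity of $\tau_{F,p}$ is already stated as part of its definition in Section \ref{SEC3}.
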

\begin{proof}
By Lemma \ref{lemxjgj}, we obtain that the anisotropic $p$-torsional rigidity $\tau_{F,p}(\mathbb{B}^{n})$ is positive for the centered unit ball $\mathbb{B}^{n}$. From this result, together with Lemma \ref{lempwj} and the homogeneity and translation invariance properties of the anisotropic $p$-torsional rigidity $\tau_{F,p}$, we can immediately derive this corollary.
\end{proof}

\begin{lemma}\label{lemqpa6c}
 Suppose $1<p<\infty$. If $\alpha_1, \ldots, \alpha_N \in(0, \infty)$, the unit vectors $u_1, \ldots, u_N$$(N \geq n+1)$ are in general position in dimension $n$, then there exists a polytope $P \in \mathcal{P}\left(u_1, \ldots, u_N\right)$ solving (\ref{eq:IMEP}) such that $P$ has exactly $N$ facets, $\eta(P)=o$, $\tau_{F,p}(P)=1$ and
$$
\Psi_{\mu,P}(o)=\inf \left\{\max _{\eta\in K} \Psi_{\mu,K}(\eta):K\in \mathcal{P}\left(u_1, \ldots, u_N\right), \tau_{F,p}(K)=1\right\}.
$$
\end{lemma}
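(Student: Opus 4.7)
My plan is to prove the lemma by the direct method: first construct a minimizer for the infimum via a Blaschke compactness argument, and then verify by a first-variation perturbation that the minimizer has all $N$ facets active.

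I would select a minimizing sequence $P_j \in \mathcal{P}_N(u_1,\ldots,u_N)$ with $\tau_{F,p}(P_j) = 1$. Lemma~\ref{lem:zgx} supplies a unique maximizer $\eta(P_j) \in \operatorname{Int}(P_j)$, and the identity $\Psi_{\mu, P-x_0}(\eta) = \Psi_{\mu, P}(\eta+x_0)$ that follows at once from (\ref{Eq:Fdhs}), combined with the translation invariance of $\tau_{F,p}$ in Proposition~\ref{prop:apxz}(b), allows me to replace each $P_j$ by $P_j - \eta(P_j)$, placing the maximizer at the origin while preserving both the constraint and the functional value. Corollary~\ref{corpwj} then yields a uniform bound on the outer radii $R_j$, so the Blaschke selection theorem produces a subsequence converging in the Hausdorff metric to some $P_* \in \mathcal{P}(u_1,\ldots,u_N)$, this class being closed under Hausdorff limits. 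The continuity of the maximum value and its location from Lemma~\ref{lem:zgx}, together with the continuity of $\tau_{F,p}$ in Lemma~\ref{lemlx1t} (whose extension from $C^{2,\alpha}$ domains to polytopic Hausdorff limits is routine via the variational characterization (\ref{Eq:patm})), gives $\tau_{F,p}(P_*) = 1$, $\eta(P_*) = o$, and $\Psi_{\mu, P_*}(o)$ equal to the infimum.

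To upgrade $P_*$ to an element of $\mathcal{P}_N$, I would argue by contradiction. Suppose some $u_{j_0}$ is not an outer normal of any facet of $P_*$; then $h_{P_*}(u_{j_0})$ is attained on a face of $P_*$ of dimension at most $n-2$, and for small $\epsilon > 0$ the truncation
\[
Q_\epsilon = P_* \cap \{x \in \mathbb{R}^n : x \cdot u_{j_0} \leq h_{P_*}(u_{j_0}) - \epsilon\}
\]
removes only a region of $n$-volume $O(\epsilon^2)$ and creates a genuine facet of outer normal $u_{j_0}$. Rescaling $\widetilde{Q}_\epsilon = \lambda_\epsilon Q_\epsilon$ with $\lambda_\epsilon > 1$ chosen via Proposition~\ref{prop:apxz}(a) so that $\tau_{F,p}(\widetilde{Q}_\epsilon) = 1$ forces $\log \lambda_\epsilon = o(\epsilon)$, since plugging a cutoff of the torsional potential of $P_*$ into (\ref{Eq:patm}) gives the comparison $1 - \tau_{F,p}(Q_\epsilon) = o(\epsilon)$. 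A direct expansion of the functional at the origin then yields
\[
\Psi_{\mu, \widetilde{Q}_\epsilon}(o) - \Psi_{\mu, P_*}(o) = |\mu|\log \lambda_\epsilon + \alpha_{j_0}\bigl[\log(h_{P_*}(u_{j_0}) - \epsilon) - \log h_{P_*}(u_{j_0})\bigr] = -\frac{\alpha_{j_0}}{h_{P_*}(u_{j_0})}\,\epsilon + o(\epsilon).
\]
Because Lemma~\ref{lem:zgx} forces $\eta(\widetilde{Q}_\epsilon) \to o$ and the strict concavity of $\Psi_{\mu, \widetilde{Q}_\epsilon}$ at its maximum makes the drift of that maximizer contribute only at quadratic order, the maximum $\max_{\eta \in \operatorname{Int}(\widetilde{Q}_\epsilon)} \Psi_{\mu, \widetilde{Q}_\epsilon}(\eta)$ inherits the same strictly negative linear-in-$\epsilon$ correction. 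Iterating the construction over any remaining missing normals yields a polytope in $\mathcal{P}_N(u_1,\ldots,u_N)$ with $\tau_{F,p} = 1$ whose maximum functional value is strictly smaller than $\Psi_{\mu, P_*}(o)$, contradicting the minimality of $P_*$.

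The hard part I anticipate is the quantitative balancing in the contradiction step: one must simultaneously justify the comparison $1 - \tau_{F,p}(Q_\epsilon) = o(\epsilon)$, verify that the linear gain in $\Psi$ beats the rescaling cost $|\mu|\log\lambda_\epsilon$, and control the drift of $\eta(\widetilde{Q}_\epsilon)$ finely enough to transfer the expansion from $\Psi_{\mu, \widetilde{Q}_\epsilon}(o)$ to the true maximum. A secondary technical issue is that Lemma~\ref{lemlx1t} is literally stated for $C^{2,\alpha}$ domains, so its use for polytopic Hausdorff limits should be justified via (\ref{Eq:patm}) together with a standard approximation argument.
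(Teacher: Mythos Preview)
Your proposal is correct and follows the same two-stage strategy as the paper: Blaschke selection on a normalized minimizing sequence to obtain a minimizer $P$ with $\eta(P)=o$ and $\tau_{F,p}(P)=1$, followed by a truncation-plus-rescaling contradiction argument to force all $N$ facets to be present. The paper's execution of the contradiction step is somewhat cleaner than yours in precisely the two places you flag as hard. First, rather than evaluating $\Psi_{\mu,\widetilde{Q}_\epsilon}$ at $o$ and then controlling the drift of its maximizer, the paper evaluates directly at the true maximizer $\eta(\bar P_\varepsilon)=\lambda(\varepsilon)\eta(P_\varepsilon)$ and writes $\Psi_{\mu,\bar P_\varepsilon}(\eta(\bar P_\varepsilon))=\Psi_{\mu,P}(\eta(P_\varepsilon))+G(\varepsilon)$; since $o$ is the maximizer of $\Psi_{\mu,P}$, the bound $\Psi_{\mu,P}(\eta(P_\varepsilon))\le\Psi_{\mu,P}(o)$ is free, and it remains only to show $G(\varepsilon)<0$ --- no second-order drift analysis is needed. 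Second, in place of your cutoff-in-(\ref{Eq:patm}) argument for $1-\tau_{F,p}(Q_\epsilon)=o(\epsilon)$, the paper appeals to the variational formula Lemma~\ref{lembfgsn}: because $u_{j_0}$ is not a facet normal, absolute continuity of $S_{F,p}(P,\cdot)$ with respect to $S(P,\cdot)$ gives $S_{F,p}(P,\{u_{j_0}\})=0$, so the one-sided derivative of $\tau_{F,p}(P_\varepsilon)$ at $\varepsilon=0^+$ vanishes directly. Your route works, but these two devices eliminate the quantitative balancing you anticipated.
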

\begin{proof}
Due to the translation invariance of $\Psi_{\mu,P}$, we can select a sequence $P_i \in \mathcal{P}\left(u_1, \ldots, u_N\right)$ such that $\eta\left(P_i\right)=o$ and $\tau_{F,p}(P_i)=1$, This sequence $\{P_i\}$ is a minimizing sequence for problem (\ref{eq:IMEP}).

Corollary \ref{corpwj} implies that the sequence $\{P_i\}$ is bounded. Then by the Blaschke selection theorem \cite[Theorem 1.8.7]{SRA2014}, there exists a subsequence $P_{i_j}$ of $P_i$, such that $P_{i_j} \rightarrow P$ in the Hausdorff metric. By Lemma \ref{lemlx1t}, we have $\tau_{F,p}(P_{i_j} ) \rightarrow \tau_{F,p}(P)$. From (\ref{Eq:sjgj}), we have 
\begin{align*}
1=\tau_{F,p}(P) \leq\frac{p-1}{n(p-1)+p}n^{-\frac{1}{p-1}}\kappa_n^{-\frac{p}{n(p-1)}}|P|^{\frac{n(p-1)+p}{n(p-1)}}.
\end{align*}
Consequently, we have
\begin{align*} 
|P|\geq\left(\frac{n(p-1)+p}{p-1}n^{\frac{1}{p-1}}\kappa_n^{\frac{p}{n(p-1)}}\right)^{\frac{n(p-1)}{n(p-1)+p}}>0.
\end{align*}
Thus, $P$ is non-degenerate.

Now, by Lemma \ref{lem:zgx}, we have $\eta(P)=\lim _{i \rightarrow \infty} \eta\left(P_i\right)=o$. By the definition of $\Psi_{\mu,P}$, we get
$$
\Psi_{\mu,P}(o)=\lim _{i \rightarrow \infty} \Psi_{\mu,P_i}(o)=\inf \left\{\max _{\eta \in K} \Psi_{\mu,K}(\eta): K\in \mathcal{P}\left(v_1, \ldots, v_N\right), \tau_{F,p}(K)=1\right\}.
$$

Next, we use the method of contradiction to prove that $P$ has exactly $N$ facets. Suppose there exists $i_0 \in\{1, \ldots, N\}$ such that $F\left(P, u_{i_0}\right)=P \cap H\left(P, u_{i_0}\right)$ is not the facet of $P$.

Since $P \in \mathcal{P}_N\left(u_1, \ldots, u_N\right)$, we can choose $|\varepsilon|$ small enough so that
$$
P_\varepsilon=P \cap\{\eta: \eta \cdot u_{i_0} \leq h(P, u_{i_0})-\eta\} \in \mathcal{P}(u_1, \ldots, u_N).
$$ 
Let
\begin{align}\label{Eq:lmdft}
\lambda(\varepsilon)=\tau_{F,p}\left(P_\varepsilon\right)^{-\frac{p-1}{n(p-1)+p}},
\end{align}
and define $\bar{P}_\varepsilon=\lambda(\varepsilon) P_\varepsilon$. Then $\bar{P}_\varepsilon \in \mathcal{P}_N\left(u_1, \ldots, u_N\right)$, $\tau_{F,p}(\bar{P}_\varepsilon)=1$, and $\bar{P}_\varepsilon \rightarrow P$ with respect to the Hausdorff metric as $\varepsilon \rightarrow 0$. 
Furthermore, by Lemma \ref{lem:zgx}, we know that
$$
\eta\left(P_\varepsilon\right) \rightarrow \eta(P)=o \in \operatorname{Int}(P) \text {, as } \varepsilon \rightarrow 0^{+}.
$$
Therefore, for a sufficiently small $\varepsilon>0$, we have
$$
\eta\left(P_\varepsilon\right) \in \operatorname{Int}(P),
$$
and
\begin{align}\label{Eq:hsexd}
h\left(P, u_k\right)>\eta\left(P_\varepsilon\right) \cdot u_k+\varepsilon, \text { for } k \in\{1, \ldots, N\}.
\end{align}
Now, let's prove 
\begin{align}\label{Eq:1hsexd}
\Psi_{\mu,\bar{P}_\varepsilon}\left(\eta\left(\bar{P}_\varepsilon\right)\right)<\Psi_{\mu,P}(\eta(P))=\Psi_{\mu,P}(o).
\end{align}
Since $\eta\left(\lambda(\varepsilon) P_\varepsilon\right)=\lambda(\varepsilon) \eta\left(P_\varepsilon\right)$, it can be deduced that 
\begin{align*}
& \Psi_{\mu,\bar{P}_\varepsilon}\left(\eta\left(\bar{P}_\varepsilon\right)\right) \\
=&\sum_{k=1}^N \alpha_k \log \left(h\left(\lambda(\varepsilon) P_\varepsilon, u_k\right)-\eta\left(\lambda(\varepsilon) P_\varepsilon\right) \cdot u_k\right) \\
=&\log \lambda(\varepsilon) \sum_{k=1}^N \alpha_k+\sum_{k=1}^N \alpha_k \log \left(h\left(P_\varepsilon, u_k\right)-\eta\left(P_\varepsilon\right) \cdot u_k\right) \\
=&\sum_{k=1}^N \alpha_k \log \left(h\left(P, u_k\right)-\eta\left(P_\varepsilon\right) \cdot u_k\right)-\alpha_{i_0} \log \left(h\left(P, u_{i_0}\right)-\eta\left(P_\varepsilon\right) \cdot u_{i_0}\right) \\
&+\log \lambda(\varepsilon) \sum_{k=1}^N \alpha_k +\alpha_{i_0} \log \left(h\left(P, u_{i_0}\right)-\eta\left(P_\varepsilon\right) \cdot u_{i_0}-\varepsilon\right) \\
=&\Psi_{\mu,P}\left(\eta\left(P_\varepsilon\right)\right)+G(\varepsilon),
\end{align*}
where
\begin{align}\label{Eq:Gdfs}
\nonumber G(\varepsilon)= & \log \lambda(\varepsilon)\sum_{k=1}^N \alpha_k-\alpha_{i_0} \log \left(h\left(P, u_{i_0}\right)-\eta\left(P_\varepsilon\right) \cdot u_{i_0}\right) \\
& +\alpha_{i_0} \log \left(h\left(P, u_{i_0}\right)-\eta\left(P_\varepsilon\right) \cdot u_{i_0}-\varepsilon\right).
\end{align}
To show that (\ref{Eq:1hsexd}) holds, we need to prove that $G(\varepsilon)<0$. Let $d_1$ be the diameter of $P$. By (\ref{Eq:hsexd}), we obtain
$$
0<h\left(P, u_{i_0}\right)-\eta\left(P_\varepsilon\right) \cdot u_{i_0}-\varepsilon<h\left(P, u_{i_0}\right)-\eta\left(P_\varepsilon\right) \cdot u_{i_0}<d_1.
$$
Based on the concavity of $\log x$ on the interval $[0, \infty)$, we can infer that
\begin{align*}
& \log\left(h(P, u_{i_0})-\eta\left(P_\varepsilon\right) \cdot u_{i_0}-\varepsilon\right)-\log \left(h(P, u_{i_0})-\eta\left(P_\varepsilon\right) \cdot u_{i_0}\right) \\
& \quad<\log \left(d_1-\varepsilon\right)-\log d_1.
\end{align*}
Therefore, combining (\ref{Eq:Gdfs}) and (\ref{Eq:lmdft}), we have
$$
G(\varepsilon)<H(\varepsilon),
$$
where
$$
H(\varepsilon)=-\frac{p-1}{n(p-1)+p} \log \tau_{F,p}\left(P_\varepsilon\right) \left(\sum_{k=1}^N \alpha_k\right)+\alpha_{i_0}\left(\log \left(d_1-\varepsilon\right)-\log d_1\right).
$$
Hence, by Lemma \ref{lembfgsn}, we obtain
\begin{align*}
H^{\prime}(\varepsilon) & =-\frac{p-1}{n(p-1)+p}\left(\sum_{k=1}^N \alpha_k\right) \frac{1}{\tau_{F,p}(P_\varepsilon)} \frac{d \tau_{F,p}(P_\varepsilon)}{d \varepsilon}-\frac{\alpha_{i_0}}{d_1-\varepsilon} \\
& =-\frac{p-1}{n(p-1)+p}\left(\sum_{k=1}^N \alpha_k\right) \frac{1}{\tau_{F,p}(P_\varepsilon)} \sum_{k=1}^N h^{\prime}(P_\varepsilon, v_k) dS_{F,p}(P_\varepsilon,\{v_k\})-\frac{\alpha_{i_0}}{d_1-\varepsilon}.
\end{align*}

Suppose $S_{F,p}(P_\varepsilon,\{v_k\})\neq 0$ for some $k \in\{1, \ldots, N\}$. By the absolute continuity of $S_{F,p}(P_\varepsilon,\cdot)$ with respect to $S(P, \cdot)$, we can infer that $S\left(P,\left\{v_k\right\}\right) \neq 0$. As a result, we can deduce that $P$ has a facet with normal vector $v_k$. By the definition of $P_\varepsilon$, for a sufficiently small $\varepsilon>0$, we have $h\left(P_\varepsilon, v_k\right)=h\left(P, v_k\right)$, this implies that $h^{\prime}\left(v_k, 0^{+}\right)=0$, where
$$
h^{\prime}(v_k, 0^{+})=\lim _{\varepsilon \rightarrow 0^{+}} \frac{h(P_\varepsilon, v_k)-h(P, v_k)}{\varepsilon}.
$$
From this and the fact that $P_\varepsilon \rightarrow P$ as $\varepsilon \rightarrow 0^{+}$, we obtain 
$$
\sum_{k=1}^N h^{\prime}(P_\varepsilon, v_k) S_{F,p}(P_\varepsilon,\{v_k\}) \rightarrow \sum_{k=1}^N h^{\prime}\left(v_k, 0^{+}\right) S_{F,p}(P,\{v_k\})=0 \text {, as } \varepsilon \rightarrow 0^{+},
$$
which indicates that, for a sufficiently small $\varepsilon$,
$H^{\prime}(\varepsilon)<0.$

Since $H(0)=0$, for sufficiently small $\varepsilon>0$, we can infer that $H(\varepsilon)<0$, which directly leads to $G(\varepsilon)<0$. Thus, there exists a $\varepsilon_0>0$ such that $P_{\varepsilon_0} \in \mathcal{P}\left(v_1, \ldots, v_N\right)$ and
$$
\Psi_{\mu,\bar{P}_{\varepsilon_0}}(\eta(\bar{P}_{\varepsilon_0}))<\Psi_{\mu,P}(\eta(P_{\varepsilon_0})) \leq \Psi_{\mu,P}(\eta(P))=\Psi_{\mu,P}(o).
$$
Let $P_0=\bar{P}_{\varepsilon_0}-\eta(\bar{P}_{\varepsilon_0})$, for $P_0 \in \mathcal{P}\left(v_1, \ldots, v_N\right)$, then we get
$$
\tau_{F,p}(P_0)=1, \qquad \eta\left(P_0\right)=o,\qquad \Psi_{P_0,\mu}(o)<\Psi_{P,\mu}(o).
$$
This is a contradiction. Therefore, $P$ has exactly $N$ facets.
\end{proof}

\begin{lemma}\label{lemqpa6g}
 Suppose $1<p<\infty$. Let $\alpha_1, \ldots, \alpha_N \in(0, \infty)$, and the unit vectors $u_1, \ldots, u_N(N \geq n+1)$ are in general position in dimension $n$. If there exists a polytope $P \in \mathcal{P}\left(u_1, \ldots, u_N\right)$ satisfying $\eta(P)=o$ and $\tau_{F,p}(P)=1$ such that
$$
\Psi_{\mu,P}(o)=\inf \left\{\max _{\eta \in K} \Phi_K(\eta): K \in \mathcal{P}\left(u_1, \ldots, u_N\right), \tau_{F,p}(K)=1\right\}.
$$
Then there exists a polytope $P_0$ such that
$$
\tau_{F,p}^{\rm{log}}(P_0, \cdot)=\sum_{k=1}^N \alpha_k \delta_{u_k}.
$$
\end{lemma}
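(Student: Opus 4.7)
The plan is a standard Lagrange-multiplier/envelope-theorem argument: differentiate the constrained minimization at the optimal polytope $P$, use the variational formula for $\tau_{F,p}$ (Lemma \ref{lembfgsn}) to convert the derivative of the constraint into an integral against $S_{F,p}(P,\cdot)$, and then rescale $P$ to absorb the resulting multiplicative constant.

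More concretely, I would first parametrize small deformations of $P$ by perturbing each support number in the direction of its facet normal: fix $(c_1,\dots,c_N)\in\mathbb{R}^N$ and define, for $|t|$ small,
\begin{equation*}
P_t=\bigcap_{k=1}^{N}\{x\in\mathbb{R}^n:x\cdot u_k\le h_P(u_k)+tc_k\}.
\end{equation*}
Because $P$ has exactly $N$ facets (by the previous lemma) and the $u_k$ are in general position, $P_t\in\mathcal{P}_N(u_1,\dots,u_N)$ and $h_{P_t}(u_k)=h_P(u_k)+tc_k$. A discrete specialization of Lemma \ref{lembfgsn} (choose any continuous $f$ on $\mathbb{S}^{n-1}$ with $f(u_k)=c_k$; only the values at the facet normals contribute to the variation since $S_{F,p}(P,\cdot)$ is supported on $\{u_1,\dots,u_N\}$) gives
\begin{equation*}
\tfrac{d}{dt}\big|_{t=0}\tau_{F,p}(P_t)=\sum_{k=1}^{N}c_k\,S_{F,p}(P,\{u_k\}).
\end{equation*}
To restore the constraint $\tau_{F,p}=1$, I rescale: set $\lambda(t)=\tau_{F,p}(P_t)^{-(p-1)/(n(p-1)+p)}$ and $\widetilde P_t=\lambda(t)P_t$, which satisfies $\tau_{F,p}(\widetilde P_t)=1$ by homogeneity (Proposition \ref{prop:apxz}(a)).

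Next I apply the envelope theorem to evaluate the derivative of the objective. Since $\eta(P)=o$ is an interior maximizer of $\Psi_{\mu,P}(\cdot)$, the first-order condition for $\eta$ kills the contribution coming from $\eta(\widetilde P_t)$, and using $h_{\widetilde P_t}(u_k)=\lambda(t)h_{P_t}(u_k)$ together with $\eta(\widetilde P_t)=\lambda(t)\eta(P_t)$,
\begin{equation*}
\Psi_{\mu,\widetilde P_t}(\eta(\widetilde P_t))=|\mu|\log\lambda(t)+\Psi_{\mu,P_t}(\eta(P_t)),\qquad |\mu|=\sum_{k=1}^{N}\alpha_k.
\end{equation*}
The minimality of $P$ gives $\frac{d}{dt}\big|_{t=0}\Psi_{\mu,\widetilde P_t}(\eta(\widetilde P_t))=0$, and combining this with $\lambda(0)=1$, $\eta(P)=o$, and the formula for $\lambda'(0)$ in terms of $\tfrac{d}{dt}\tau_{F,p}(P_t)$, I obtain, for every $(c_1,\dots,c_N)\in\mathbb{R}^N$,
\begin{equation*}
\sum_{k=1}^{N}\frac{\alpha_k c_k}{h_P(u_k)}=|\mu|\,\frac{p-1}{n(p-1)+p}\sum_{k=1}^{N}c_k\,S_{F,p}(P,\{u_k\}).
\end{equation*}
Since $\langle x,\mathbf{g}_P(x)\rangle=h_P(u_k)$ on the $k$-th facet, definition \eqref{Eq:logcd} yields $\tau_{F,p}^{\mathrm{log}}(P,\{u_k\})=\frac{p-1}{n(p-1)+p}h_P(u_k)S_{F,p}(P,\{u_k\})$, so arbitrariness of the $c_k$ gives $\alpha_k=|\mu|\,\tau_{F,p}^{\mathrm{log}}(P,\{u_k\})$ for every $k$.

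Finally, I rescale $P$ to absorb $|\mu|$. Put $P_0=\lambda_0 P$ with $\lambda_0=|\mu|^{(p-1)/(n(p-1)+p)}$. The scaling behavior of $S_{F,p}$ (which follows from Proposition \ref{prop:apxz}(a) together with \eqref{Eq:AMBJCD} and the explicit change of the solution $u$ under dilation in \eqref{Eq:uv28qwe}) gives $\tau_{F,p}^{\mathrm{log}}(\lambda_0 P,\{u_k\})=\lambda_0^{(n(p-1)+p)/(p-1)}\tau_{F,p}^{\mathrm{log}}(P,\{u_k\})=|\mu|\,\tau_{F,p}^{\mathrm{log}}(P,\{u_k\})=\alpha_k$, yielding $\tau_{F,p}^{\mathrm{log}}(P_0,\cdot)=\sum_{k=1}^{N}\alpha_k\delta_{u_k}$.

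The main obstacle I anticipate is the rigorous justification of the variational step: Lemma \ref{lembfgsn} is stated for Wulff shapes associated with continuous functions, so I must either cite/verify a polytopal version showing that, since $S_{F,p}(P,\cdot)$ is concentrated on the facet normals, only the values $c_k=f(u_k)$ matter, or invoke the envelope theorem carefully (the map $t\mapsto\eta(P_t)$ is continuous by Lemma \ref{lem:zgx}, but its differentiability is not needed because of the first-order condition). The remaining computations are linear in the $c_k$ and homogeneous in the scaling, so they are routine.
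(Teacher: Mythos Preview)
Your proposal is correct and follows essentially the same route as the paper: perturb the support numbers $h_P(u_k)\mapsto h_P(u_k)+tc_k$, normalize by $\lambda(t)=\tau_{F,p}(P_t)^{-(p-1)/(n(p-1)+p)}$, differentiate the objective at $t=0$ using Lemma~\ref{lembfgsn}, and then rescale by $|\mu|^{(p-1)/(n(p-1)+p)}$. The only noteworthy difference is that the paper does not appeal to the envelope theorem to dispose of the $\eta$-dependence; instead it proves directly, via the implicit function theorem (showing the Hessian $\sum_k \alpha_k h_P(u_k)^{-2}u_ku_k^T$ is positive definite), that $\eta'(0)$ exists, and then uses the first-order condition \eqref{Eq:auki} to annihilate the term $\sum_k \alpha_k(\eta'(0)\cdot u_k)/h_P(u_k)$---so your claim that ``differentiability of $\eta(P_t)$ is not needed'' is a legitimate shortcut, but be prepared to cite a Danskin-type result if pressed.
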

\begin{proof}
Since $P \in \mathcal{P}_N\left(u_1, \ldots, u_N\right)$, we choose $|\varepsilon|$ small enough such that
$$
P_\varepsilon=\bigcap_{j=1}^N\left\{x: x \cdot u_j \leq h\left(P, u_j\right)+\varepsilon \delta_j, j=1, \ldots, N\right\},
$$
where $\delta_1, \ldots, \delta_N \in \mathbb{R}$. Let
\begin{align*}
\lambda(\varepsilon)=\tau_{F,p}\left(P_\varepsilon\right)^{-\frac{p-1}{n(p-1)+p}},
\end{align*}
and define $\bar{P}_\varepsilon=\lambda(\varepsilon) P_\varepsilon$. Then $\bar{P}_\varepsilon \in \mathcal{P}_N\left(u_1, \ldots, u_N\right)$, $\tau_{F,p}(\bar{P}_\varepsilon)=1$, and $\bar{P}_\varepsilon \rightarrow P$ with respect to the Hausdorff metric as $\varepsilon \rightarrow 0$.

Let $\eta(\varepsilon):=\eta(\bar{P}_\varepsilon)$,  and set 
$$\Psi(\varepsilon)=\max _{\eta \in \operatorname{Int}(\bar{P}_\varepsilon)} \Psi_{\mu,\bar{P}_\varepsilon}(\eta),$$
 from (\ref{eq:zgx1}) and (\ref{Eq:Fdhs}), it follows that
\begin{align*}
\Psi(\varepsilon) & =\max _{\eta \in \operatorname{Int}(\bar{P}_\varepsilon)} \sum_{k=1}^N \alpha_k \log \left(h\left(\bar{P}_\varepsilon, u_k\right)-\eta \cdot u_k\right) \\
& =\sum_{k=1}^N \alpha_k \log \left(h\left(\bar{P}_\varepsilon,u_k\right)-\eta(\bar{P}_\varepsilon) \cdot u_k\right)\\
& =\sum_{k=1}^N \alpha_k \log \left(\lambda(\varepsilon)h\left(P_\varepsilon,u_k\right)-\eta(\varepsilon) \cdot u_k\right).
\end{align*}
Since the maximum is attained in an interior point $\eta(\varepsilon)$, for each $i=1, \ldots, n$, where $u_{i, k}$ is the $i$-th element of the vector $u_k$, we obtain
\begin{align}\label{Eq:2yd}
\frac{\partial \Psi(\varepsilon)}{\partial \eta(\varepsilon)}=\sum_{k=1}^N \frac{\alpha_k u_{k, i}}{\lambda (t) h\left(P_\varepsilon, u_k\right)-\eta(\varepsilon) \cdot u_k}=0.
\end{align}
 In particular, when $\varepsilon=0$, we have, $\lambda (0)=1, \eta(0)=o$. Thus, by (\ref{Eq:2yd}), it can be deduced that
\begin{align}\label{Eq:auki}
\sum_{k=1}^N \frac{\alpha_k u_{k, i}}{h\left(P, u_k\right)}=0,
\end{align}
for each $i=1, \ldots, n$.

Next, it shows that $\left.\eta^{\prime}(\varepsilon)\right|_{t=0}$ exists.
Let
$$
\Phi_i(\varepsilon, \eta)=\sum_{k=1}^N \frac{\alpha_k u_{k, i}}{\lambda (\varepsilon) h\left(P_\varepsilon, u_k\right)-\left(\eta_1 u_{k, 1}+\ldots+\eta_n u_{k, n}\right)},
$$
for each $i=1, \ldots, n$.

Then we have
$$
\left.\frac{\partial \Phi_i}{\partial \eta_j}\right|_{(0, \ldots, 0)}=\sum_{k=1}^N \frac{\alpha_k u_{k, i} u_{k, j}}{h\left(P, u_k\right)^2},
$$
for each $j=1, \ldots, n$. It follows that
$$
\left(\left.\frac{\partial \Phi}{\partial \eta}\right|_{(0, \ldots, 0)}\right)_{n \times n}=\sum_{k=1}^N \frac{\alpha_k}{h\left(P, u_k\right)^2} u_k u_k^T,
$$
where $u_k u_k^T$ is an $n \times n$ matrix. 

Considering that $u_1, \ldots, u_N$ are not concentrated on any closed hemisphere, for any $x \in \mathbb{R}^n$, with $x \neq\{0\}$, there exists $u_{i_0} \in\left\{u_1, \ldots, u_N\right\}$ satisfies $u_{i_0} \cdot x \neq 0$, and 
$$
x^T\left(\sum_{k=1}^N \frac{\alpha_k}{h\left(P, u_k\right)^2} u_k u_k^T\right) x=\sum_{k=1}^N \frac{\alpha_k\left(x \cdot u_k\right)^2}{h\left(P, u_k\right)^2} \geq \frac{\alpha_{i_0}\left(x \cdot u_{i_0}\right)^2}{h\left(P, u_{i_0}\right)^2}>0.
$$
This shows that $\left.\frac{\partial \Phi}{\partial \eta}\right|_{(0, \ldots, 0)}$ is positively definite. Based on the inverse function theorem, it can be claimed that $\eta^{\prime}(0) = (\eta_1^{\prime}(0), \ldots, \eta_n^{\prime}(0))$ exists. 

Since $\Psi(0)$ is the minimum value of $\Psi(t)$, the result of (\ref{Eq:auki}) yields
\begin{align*}
0 & =\left.\frac{d \Psi(\varepsilon)}{d \varepsilon}\right|_{\varepsilon=0}=\left.\frac{d}{d\varepsilon}\right|_{\varepsilon=0}\left(\sum_{k=1}^N \alpha_k \log \left(\lambda(\varepsilon)h(P_\varepsilon,u_k)-\eta(\varepsilon) \cdot u_k\right)\right) \\
&=\sum_{k=1}^N \alpha_k \left(\frac{\lambda(0)}{h(P,u_k)}\left.\frac{d}{d\varepsilon}\right|_{\varepsilon=0}h(P_\varepsilon,u_k)-\frac{p-1}{n(p-1)+p}\left.\frac{d}{d\varepsilon}\right|_{\varepsilon=0}\tau_{F,p}\left(P_\varepsilon\right)+\frac{\eta'(0)\cdot u_k}{h(P,u_k)}\right)\\
&=\sum_{k=1}^N \alpha_k\left(\frac{\delta_k}{h(P,u_k)}-\frac{p-1}{n(p-1)+p}\left(\sum_{i=1}^N \delta_iS_{F,p}(P,\{u_i\})\right)\right)\\
&=\sum_{k=1}^N \delta_k\left(\frac{\alpha_k}{h(P,u_k)}-\frac{p-1}{n(p-1)+p}\left(\sum_{i=1}^N \alpha_i\right)S_{F,p}(P,\{u_k\})\right).
\end{align*}
Since the  $\delta_i$ ($i=1, \ldots, N$) are arbitrary, we obatin
$$
\alpha_k\frac{1}{h(P,u_k)}=\frac{p-1}{n(p-1)+p}\left(\sum_{i=1}^N \alpha_i\right)S_{F,p}(P,\{v_k\}),
$$
for all $k=1, \ldots, N$. In view of the fact that $P$ is $n$-dimensional and $o \in \operatorname{Int}(P)$, as a result, $h\left(P, u_k\right)>0$, therefore,
$$
\alpha_k=\left(\sum_{i=1}^N \alpha_i\right)\frac{p-1}{n(p-1)+p}h(P, u_k)S_{F,p}(P,\{v_k\}).
$$
Combining (\ref{Eq:logcd}) with (\ref{Eq:sjj2}) yields
$$\left(\sum_{i=1}^N \alpha_i\right) d \tau_{F,p}^{\rm{log}}(P, \cdot)= \mu.$$
Let $P_0=\left(\sum_{i=1}^N \alpha_i\right)^{\frac{p-1}{n(p-1)+p}} P$, then
$$
\tau_{F,p}^{\rm{log}}(P_0, \cdot)=\mu.
$$
Thus, the conclusion holds.
\end{proof}

\begin{proof}[Proof of Theorem \ref{thmlslamp}]
From Lemma \ref{lemqpa6c} and Lemma \ref{lemqpa6g}, we can deduce there exists a polytope $P$ that is a discrete solution to the log-Minkowski problem of anisotropic $p$-torsional measure.
\end{proof}

\subsection{The general case}

Let $\mu$ be a finite Borel measure (not necessarily discrete) on $\mathbb{S}^{n-1}$ that is not concentrated in any closed hemisphere. We first construct a sequence of discrete measures whose support sets are in general position such that the sequence of discrete measures converges to $\mu$ weakly.

Thus, by (\ref{Eq:Fdhs}), for the general measure, we have
\begin{align}\label{Eq:CGFdhs}
\Psi_{\mu, K}(\eta)=\int_{\mathbb{S}^{n-1}}\log\left(h_K(u)-\eta \cdot u\right)d\mu(u).
\end{align}

For each positive integer $k$, we can divide $\mathbb{S}^{n-1}$ into enough small pieces that the diameter of each small piece is less than $\frac{1}{k}$; That is, there are $N_k>0 $ and a partition of $\mathbb{S}^{n-1}$, with $U_{1, k}, \ldots, U_{N_k, k}$ said, makes $d\left(U_{i, k}\right)<\frac{1}{k}$ and $U_{i, k}$ include not empty internal (relative to the topology of $\mathbb{S}^{n-1}$). We can choose $u_{i, k} \in U_{i, k}$ so that $u_{1, k}, \ldots, u_{N_k, k}$ is in general position. When $k$ is large, it is obvious that the vector $u_{1, k}, \ldots, u_{N_k, k}$ cannot be contained in any closed hemisphere.

We define the discrete measure $\mu_k$ on $\mathbb{S}^{n-1}$ by
$$
\mu_k=\sum_{i=1}^{N_k}\left(\mu\left(U_{i, k}\right)+\frac{1}{N_k^2}\right) \delta_{u_{i, k}},
$$
and
\begin{align}\label{Eq:mcddw}
\bar{\mu}_k=\frac{|\mu|}{\left|\mu_k\right|} \mu_k.
\end{align}

It is clear that $\bar{\mu}_k$ is a discrete measure on $\mathbb{S}^{n-1}$ satisfying the conditions in Lemma \ref{lemqpa6g} and $\bar{\mu}_k \rightharpoonup \mu$ weakly. Therefore, by Lemma \ref{lemqpa6g}, there exist polytopes $P_k$ containing the origin in their interiors such that
\begin{align}\label{Eq:pcg9}
\tau_{F,p}^{\rm{log}}(\bar{P}_k, \cdot)=\bar{\mu}_k.
\end{align}

A careful examination of the proofs for Lemma \ref{lemqpa6c} and Lemma \ref{lemqpa6g} immediately reveals that $\bar{P}_k$ is a rescaled version of $P_k$ satisfies $P_k \in \mathcal{P}\left(u_1, \ldots, u_N\right)$ solving (\ref{eq:IMEP}) such that $P_k$ has exactly $N$ facets, $\eta_k(P_k)=o$, $\tau_{F,p}(P_k)=1$ and
\begin{align}\label{Eq:8ydcg}
\Psi_{\bar{\mu},P_k}(o)=\inf \left\{\max _{\eta_k\in K_k} \Psi_{\bar{\mu},K_k}(\eta_k):K_k\in \mathcal{P}\left(u_1, \ldots, u_N\right), \tau_{F,p}(K_k)=1\right\}.
\end{align}

In particular,
\begin{align}\label{Eq:pkhepl}
\bar{P}_k =\left(|\bar{\mu}_k|\right)^{\frac{p-1}{n(p-1)+p}}P_k.
\end{align}

We require the following lemma.
\begin{lemma}[\cite{GLJ2024}]\label{lems6cg}
 Let $u_{1, m}, \ldots u_{N_k, k} \in \mathbb{S}^{n-1}$ be as given above. If
$$
Q_k=\bigcap_{i=1}^{N_k}\left\{x \in \mathbb{R}^n: x \cdot u_{i, k} \leq 1\right\}.
$$
Then for sufficiently large $k$, we have
$$
\mathbb{B}^{n} \subset Q_k \subset 2 \mathbb{B}^{n},
$$
where $\mathbb{B}^{n}$ is the centered unit ball.
\end{lemma}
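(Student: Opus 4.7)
The plan is to verify the two inclusions separately. For the inner inclusion $\mathbb{B}^n \subset Q_k$, the Cauchy--Schwarz inequality is enough: for every $x$ with $|x| \leq 1$ and every index $i$, one has $x \cdot u_{i,k} \leq |x| \, |u_{i,k}| \leq 1$ since each $u_{i,k}$ is a unit vector, so $x$ lies in each of the defining half-spaces and hence in $Q_k$. This part is uniform in $k$ and does not use the smallness of the partition sets.

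The outer inclusion $Q_k \subset 2\mathbb{B}^n$ is where the diameter condition $d(U_{i,k}) < 1/k$ enters, and is the substantive step. Given a nonzero $x \in Q_k$, I would set $v := x / |x| \in \mathbb{S}^{n-1}$. Because $\{U_{i,k}\}_{i=1}^{N_k}$ partitions $\mathbb{S}^{n-1}$, there exists an index $i_0$ with $v \in U_{i_0,k}$, and then $|v - u_{i_0,k}| < 1/k$ (working with the chord metric; if the diameters are measured spherically, the same bound holds up to a multiplicative constant arbitrarily close to $1$ for large $k$). Writing
$$|x| = x \cdot v = x \cdot u_{i_0,k} + x \cdot (v - u_{i_0,k}),$$
the first summand is at most $1$ because $x \in Q_k$, and the second is at most $|x|/k$ by Cauchy--Schwarz. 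Rearranging yields $|x|(1 - 1/k) \leq 1$, that is, $|x| \leq k/(k-1)$.

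Since $k/(k-1) \to 1$ as $k \to \infty$, one certainly obtains $|x| \leq 2$ for all $k \geq 2$ (and in fact a much sharper bound for large $k$), which gives $Q_k \subset 2\mathbb{B}^n$. The only real subtlety is matching the convention used for the diameter of the partition sets, but this is cosmetic: once the chord/arc comparison is settled, the whole argument reduces to a one-line Cauchy--Schwarz estimate combined with a perturbation of the chosen support normal, and no deeper input is needed.
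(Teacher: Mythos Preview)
Your argument is correct. The paper itself does not prove this lemma; it simply cites it from \cite{GLJ2024} (Guo, Xi and Zhao). Your proof is precisely the elementary approach one expects for such a statement and is presumably also what appears in the cited reference: the inner inclusion is immediate from Cauchy--Schwarz, and for the outer one you approximate the direction $x/|x|$ of any $x \in Q_k$ by the nearby vector $u_{i_0,k}$ coming from the partition piece that contains it, then use the defining inequality of $Q_k$ to extract the bound $|x| \le k/(k-1)$. Nothing more is needed, and your remark about the chord versus geodesic diameter convention is the only cosmetic point; for large $k$ the two agree up to a factor tending to $1$, so the conclusion $Q_k \subset 2\mathbb{B}^n$ holds either way for all sufficiently large $k$.
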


\begin{lemma}\label{lem59}
 Let $\bar{P}_k$ be as given in (\ref{Eq:pcg9}) and $\eta_k(P_k)$ be the minimizer to (\ref{Eq:8ydcg}) with $\eta_k(P_k)=o$. If $|\mu|=1$ (and consequently $\left|\bar{\mu}_k\right|=1$ ), then there exists $c_0>0$ independent of $k$, such that
\begin{align}\label{Eq:PSIsyj}
\Psi_{\bar{\mu},\bar{P}_k}(o)<c_0.
\end{align}
\end{lemma}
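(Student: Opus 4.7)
The plan is to exploit the minimality of $P_k$ in (\ref{Eq:8ydcg}) by constructing an explicit competitor whose $\Psi_{\bar\mu_k,\cdot}(\cdot)$-value can be bounded from above independently of $k$. The natural candidate is a rescaling of the polytope $Q_k$ from Lemma \ref{lems6cg}, which by construction lies in $\mathcal{P}(u_{1,k},\ldots,u_{N_k,k})$ and is trapped between $\mathbb{B}^n$ and $2\mathbb{B}^n$.

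First, I would note that since $|\bar\mu_k|=1$, formula (\ref{Eq:pkhepl}) gives $\bar P_k = P_k$, so it suffices to bound $\Psi_{\bar\mu_k,P_k}(o)$. Let
$$
\lambda_k := \tau_{F,p}(Q_k)^{-\frac{p-1}{n(p-1)+p}},
\qquad K_k^{\ast} := \lambda_k Q_k.
$$
By the homogeneity in Proposition \ref{prop:apxz}(a) we have $\tau_{F,p}(K_k^{\ast})=1$, and clearly $K_k^{\ast}\in \mathcal{P}(u_{1,k},\ldots,u_{N_k,k})$, so $K_k^{\ast}$ is an admissible competitor in (\ref{Eq:8ydcg}). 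Applying Proposition \ref{prop:apxz}(d) together with Lemma \ref{lems6cg} and homogeneity,
$$
\tau_{F,p}(\mathbb{B}^n)\le \tau_{F,p}(Q_k)\le \tau_{F,p}(2\mathbb{B}^n)=2^{\frac{n(p-1)+p}{p-1}}\tau_{F,p}(\mathbb{B}^n),
$$
and Lemma \ref{lemxjgj} guarantees $\tau_{F,p}(\mathbb{B}^n)>0$. Hence $\lambda_k$ is bounded above and below by positive constants independent of $k$.

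Next I would estimate $\Psi_{\bar\mu_k, K_k^{\ast}}$ on the whole interior of $K_k^{\ast}$. Since $K_k^{\ast}\subset 2\lambda_k\mathbb{B}^n$, every $\eta\in K_k^{\ast}$ satisfies $|\eta|\le 2\lambda_k$ and $h_{K_k^{\ast}}(u)\le 2\lambda_k$ for all $u\in\mathbb{S}^{n-1}$. Consequently, for $\eta\in\operatorname{Int}(K_k^{\ast})$,
$$
0<h_{K_k^{\ast}}(u)-\eta\cdot u \le h_{K_k^{\ast}}(u)+|\eta|\le 4\lambda_k,
$$
and therefore, using $|\bar\mu_k|=1$ and formula (\ref{Eq:CGFdhs}),
$$
\max_{\eta\in\operatorname{Int}(K_k^{\ast})}\Psi_{\bar\mu_k,K_k^{\ast}}(\eta)
\le \int_{\mathbb{S}^{n-1}}\log(4\lambda_k)\,d\bar\mu_k(u)=\log(4\lambda_k).
$$
Invoking the minimality of $P_k$ in (\ref{eq:IMEP}), together with $\eta_k(P_k)=o$ and $\bar P_k=P_k$, one obtains
$$
\Psi_{\bar\mu_k,\bar P_k}(o)=\max_{\eta\in\operatorname{Int}(P_k)}\Psi_{\bar\mu_k,P_k}(\eta)\le \max_{\eta\in\operatorname{Int}(K_k^{\ast})}\Psi_{\bar\mu_k,K_k^{\ast}}(\eta)\le \log(4\lambda_k),
$$
and the uniform bound on $\lambda_k$ derived above then yields a constant $c_0>0$, independent of $k$, with $\Psi_{\bar\mu_k,\bar P_k}(o)<c_0$, which is (\ref{Eq:PSIsyj}).

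The only point that might require care is ensuring the comparison body $K_k^{\ast}$ actually belongs to the feasible class $\mathcal{P}(u_{1,k},\ldots,u_{N_k,k})$ and that the uniform positive lower bound on $\tau_{F,p}(\mathbb{B}^n)$ is in hand; both are immediate from the definition of $Q_k$ and from Lemma \ref{lemxjgj}. No delicate limiting argument is needed — the minimality of $P_k$ together with the two-sided sandwich $\mathbb{B}^n\subset Q_k\subset 2\mathbb{B}^n$ supplied by Lemma \ref{lems6cg} does all the work.
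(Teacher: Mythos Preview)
Your proof is correct and follows essentially the same approach as the paper: both rescale the polytope $Q_k$ from Lemma \ref{lems6cg} to unit anisotropic $p$-torsional rigidity, use it as a competitor in (\ref{Eq:8ydcg}), and exploit the sandwich $\mathbb{B}^n\subset Q_k\subset 2\mathbb{B}^n$ together with monotonicity and homogeneity of $\tau_{F,p}$ to obtain the uniform bound. Your argument is slightly more explicit in bounding $\lambda_k$ on both sides, but the key idea and the resulting estimate $\Psi_{\bar\mu_k,\bar P_k}(o)\le \log(4\lambda_k)$ coincide with the paper's $\log(4r_0)$ bound.
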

\begin{proof}
Base on Lemma \ref{lems6cg}, for sufficiently large $k$, we have $r \mathbb{B}^{n} \subset r Q_k \subset 2 r \mathbb{B}^{n}$. By the homogeneity of $\tau_{F,p}$, there exists $r_0(k)>0$ such that

$$
\tau_{F,p}(r_0(k) Q_k)=1.
$$

Since $r \mathbb{B}^{n} \subset r Q_k$, we have

$$
r_0(k)^\frac{n(p-1)+p}{p-1} \tau_{F,p}(\mathbb{B}^{n})=\tau_{F,p}(r_0(k) \mathbb{B}^{n}) \leq \tau_{F,p}(r_0(k) Q_k)=1.
$$
Thus, $r_0(k) \leq r_0$ for a constant $r_0$ independent of $k$.
\begin{align*}
& \Psi_{\bar{\mu}_k,P_k}(o)\\
\leq& \int_{\mathbb{S}^{n-1}} \log (h(r_0(k) Q_k,u)-\eta_k(r_0(k) Q_k)\cdot u d \bar{\mu}_k(u) \\
\leq&\int_{\mathbb{S}^{n-1}} \log h(4 r_0 \mathbb{B}^{n},u) d \bar{\mu}_k(u)\\
=&\log\left(4 r_0\right)|\mu|,
\end{align*}
from this and combining (\ref{Eq:8ydcg}) with (\ref{Eq:pkhepl}), it can be obtained that (\ref{Eq:PSIsyj}) is valid.
\end{proof}

\begin{lemma}\label{lem:YXNGD}
 Let $P_m$ be as given in (\ref{Eq:pkhepl}), then there exists $c_1>0$ such that for every $k$, 
$$\tau_{F,p}(\bar{P}_k)>c_1.$$
\end{lemma}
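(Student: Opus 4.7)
The plan is to exploit the explicit rescaling relation \eqref{Eq:pkhepl} together with the homogeneity of the anisotropic $p$-torsional rigidity proved in Proposition \ref{prop:apxz}(a), so that $\tau_{F,p}(\bar{P}_k)$ can be computed exactly in terms of $|\bar{\mu}_k|$ and $\tau_{F,p}(P_k)$. Because the auxiliary polytopes $P_k$ are selected with the normalization $\tau_{F,p}(P_k)=1$, and $|\bar{\mu}_k|=|\mu|$ by the construction in \eqref{Eq:mcddw}, the claim will reduce to a one-line identity.

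More precisely, I would first recall from \eqref{Eq:pkhepl} that
\[
\bar{P}_k=(|\bar{\mu}_k|)^{\frac{p-1}{n(p-1)+p}}P_k,
\]
and from \eqref{Eq:mcddw} that $|\bar{\mu}_k|=|\mu|$ for every $k$. Then, applying the positive homogeneity of $\tau_{F,p}$ of order $\frac{p}{p-1}+n=\frac{n(p-1)+p}{p-1}$, one computes
\[
\tau_{F,p}(\bar{P}_k)
=(|\bar{\mu}_k|)^{\frac{p-1}{n(p-1)+p}\cdot\frac{n(p-1)+p}{p-1}}\tau_{F,p}(P_k)
=|\bar{\mu}_k|\,\tau_{F,p}(P_k).
\]
Since $P_k$ is selected in the discrete step so that $\tau_{F,p}(P_k)=1$ and since $|\bar{\mu}_k|=|\mu|$, this yields $\tau_{F,p}(\bar{P}_k)=|\mu|$ for every $k$.

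Finally, since $\mu$ is a non-zero finite Borel measure on $\mathbb{S}^{n-1}$, we have $|\mu|>0$, so we may simply take $c_1=\tfrac{1}{2}|\mu|>0$ (or $c_1=|\mu|$ if strict inequality is not required). There is no real obstacle in this argument: the entire content lies in the correct bookkeeping of homogeneity exponents, which is why the normalization $\tau_{F,p}(P_k)=1$ in the discrete step was chosen precisely to match the exponent $\frac{p-1}{n(p-1)+p}$ appearing in \eqref{Eq:pkhepl}. In fact, one sees that the lower bound is not just $c_1$ but the explicit constant $|\mu|$, a stronger statement that will be convenient in the subsequent Blaschke-selection/weak-convergence argument used to pass from $\bar{P}_k$ to the limiting convex body solving $\tau_{F,p}^{\mathrm{log}}(K,\cdot)=\mu$.
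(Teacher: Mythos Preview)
Your argument is correct and essentially identical to the paper's: both compute $\tau_{F,p}(\bar{P}_k)=|\bar{\mu}_k|\,\tau_{F,p}(P_k)=|\mu|$ using the scaling relation \eqref{Eq:pkhepl}, the normalization $\tau_{F,p}(P_k)=1$, and $|\bar{\mu}_k|=|\mu|$. The only cosmetic difference is that the paper routes the computation through $\tau_{F,p}(\bar{P}_k)=\tau_{F,p}^{\rm log}(\bar{P}_k,\mathbb{S}^{n-1})$ via \eqref{Eq:logcd} and \eqref{Eq:AFNZg}, whereas you invoke the homogeneity of $\tau_{F,p}$ from Proposition~\ref{prop:apxz}(a) directly.
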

\begin{proof}
From (\ref{Eq:pkhepl}), (\ref{Eq:logcd}), (\ref{Eq:AFNZg}) and the fact $\tau_{F,p}(P_k)=1$, we deduce
$$
\tau_{F,p}(\bar{P}_k)=\left|\tau_{F,p}^{\rm log}(\bar{P}_m, \mathbb{S}^{n-1})\right|=\left|\tau_{F,p}^{\rm log}(\left(|\bar{\mu}_k|\right)^{\frac{p-1}{n(p-1)+p}}P_k, \mathbb{S}^{n-1})\right|=\left|\bar{\mu}_m\right|=|\mu|:=c_1>0.
$$
\end{proof}

A finite Borel measure $\mu$ is said to satisfy the subspace mass inequality if
\begin{align}\label{Eq:ZKJZRL}
\frac{\mu\left(\xi_i \cap \mathbb{S}^{n-1}\right)}{|\mu|}<\frac{i}{n},
\end{align}
for each $i$ dimensional subspace $\xi_i \subset \mathbb{R}^n$ and each $i=1, \ldots, n-1$.

We have an inequality that is weaker than inequality (\ref{Eq:ZKJZRL}), that is assume that the measure $\mu$ satisfies the subspace mass inequality for index $2\leq p<\infty$,
\begin{align}\label{Eq:ZKJZL}
\frac{\mu\left(\xi_i \cap \mathbb{S}^{n-1}\right)}{|\mu|}<1-\frac{(n(p-1)+p)(n-i)}{n(n+2)(p-1)},
\end{align}
for each $i$ dimensional subspace $\xi_i \subset \mathbb{R}^n$, and each $i=1, \ldots, n-1$.

Obviously, if $2\leq p<\infty$, then $\frac{i}{n}\leq1-\frac{(n(p-1)+p)(n-i)}{n(n+2)(p-1)}<1$. If $p=2$, the inequality (\ref{Eq:ZKJZL}) is exactly (\ref{Eq:ZKJZRL}). 

Next, it is to be demonstrated that $P_j$ is uniformly bounded when $\mu$ (not necessarily even) satisfies the subspace mass inequality (\ref{Eq:ZKJZL}). For ease of exposition, we write
$$
\Theta_i=1-\frac{(n(p-1)+p)(n-i)}{n(n+2)(p-1)}=\frac{n(p-2)+i(n(p-1)+p)}{n(n+2)(p-1)}.
$$

For each $\omega \subset \mathbb{S}^{n-1}$ and $\eta>0$, we define
$$
\mathfrak{M}_\zeta(\omega)=\left\{u \in \mathbb{S}^{n-1}:|u-v|<\zeta \text {, for some } v \in \omega\right\}.
$$

The next lemma shows that when $\mu$ satisfies the subspace mass inequality, then the sequence of approximating discrete measures $\bar{\mu}_k$ satisfies a slightly stronger subspace mass inequality for sufficiently large $k$.

\begin{lemma}
 Let $\mu$ be a non-zero finite Borel measure on $\mathbb{S}^{n-1}$ and $\bar{\mu}_k$ be as constructed in (\ref{Eq:mcddw}). If $\mu$ satisfies the subspace mass inequality (\ref{Eq:ZKJZL}), then there exist $\bar{\Theta}_i \in(0, \Theta_i)$, $\zeta_0\in(0,1)$ and $N_0>0$ such that for all $k>N_0$,
\begin{align}\label{Eq:yhbds}
\frac{\bar{\mu}_k\left(\mathfrak{M}_{\zeta_0}\left(\xi_i \cap \mathbb{S}^{n-1}\right)\right)}{|\mu|}<\bar{\Theta}_i,
\end{align}
where $i=1, \ldots, n-1$.
\end{lemma}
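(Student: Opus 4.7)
The plan is to combine the compactness of the Grassmannian with the outer regularity of $\mu$, and then to show that the discrete approximation $\bar{\mu}_k$ is uniformly close to $\mu$ on tubular neighborhoods of great subspheres.

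First, I would fix $i \in \{1, \dots, n-1\}$ and consider the function $g_i: G(i,n) \to [0,1]$ defined by $g_i(\xi_i) = \mu(\xi_i \cap \mathbb{S}^{n-1})/|\mu|$, where $G(i,n)$ is the compact Grassmannian of $i$-dimensional subspaces of $\mathbb{R}^n$. I would show that $g_i$ is upper semicontinuous: if $\xi_i^{(m)} \to \xi_i$ in $G(i,n)$, then $\xi_i^{(m)} \cap \mathbb{S}^{n-1} \to \xi_i \cap \mathbb{S}^{n-1}$ in Hausdorff distance, so for every $\varepsilon > 0$ one has $\xi_i^{(m)} \cap \mathbb{S}^{n-1} \subset \mathfrak{M}_\varepsilon(\xi_i \cap \mathbb{S}^{n-1})$ for $m$ large, and monotone continuity of $\mu$ as $\varepsilon \downarrow 0$ yields $\limsup g_i(\xi_i^{(m)}) \leq g_i(\xi_i)$. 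Since $g_i(\xi_i) < \Theta_i$ pointwise by hypothesis (\ref{Eq:ZKJZL}), compactness of $G(i,n)$ gives $\Theta_i^* := \sup_{\xi_i} g_i(\xi_i) < \Theta_i$. I would then fix any $\bar{\Theta}_i \in (\Theta_i^*, \Theta_i)$.

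Next, I would produce a uniform $\zeta_0$. For each individual $\xi_i$, monotone continuity gives $\mu(\mathfrak{M}_\zeta(\xi_i \cap \mathbb{S}^{n-1})) \downarrow \mu(\xi_i \cap \mathbb{S}^{n-1})$ as $\zeta \to 0^+$, so I can pick $\zeta(\xi_i) > 0$ with $\mu(\mathfrak{M}_{3\zeta(\xi_i)}(\xi_i \cap \mathbb{S}^{n-1}))/|\mu| < \bar{\Theta}_i$. Using the Hausdorff continuity noted above, each such $\xi_i$ has a neighborhood $\mathcal{U}_{\xi_i} \subset G(i,n)$ on which $\eta_i \cap \mathbb{S}^{n-1} \subset \mathfrak{M}_{\zeta(\xi_i)}(\xi_i \cap \mathbb{S}^{n-1})$, so $\mathfrak{M}_{\zeta(\xi_i)}(\eta_i \cap \mathbb{S}^{n-1}) \subset \mathfrak{M}_{2\zeta(\xi_i)}(\xi_i \cap \mathbb{S}^{n-1})$. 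Extracting a finite subcover of $G(i,n)$ and taking the minimum of the associated radii yields $\zeta_i > 0$ with $\mu(\mathfrak{M}_{2\zeta_i}(\eta_i \cap \mathbb{S}^{n-1}))/|\mu| < \bar{\Theta}_i$ for every $\eta_i \in G(i,n)$. Finally I would set $\zeta_0 = \tfrac{1}{2} \min_{1 \leq i \leq n-1} \zeta_i$.

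For the third step I compare $\bar{\mu}_k$ with $\mu$. Because each partition piece $U_{j,k}$ has diameter less than $1/k$, for $k$ with $1/k < \zeta_0$, if $u_{j,k} \in \mathfrak{M}_{\zeta_0}(\xi_i \cap \mathbb{S}^{n-1})$ then $U_{j,k} \subset \mathfrak{M}_{\zeta_0 + 1/k}(\xi_i \cap \mathbb{S}^{n-1}) \subset \mathfrak{M}_{2\zeta_0}(\xi_i \cap \mathbb{S}^{n-1})$; since the $U_{j,k}$'s are disjoint and at most $N_k$ in number,
\[
\mu_k\bigl(\mathfrak{M}_{\zeta_0}(\xi_i \cap \mathbb{S}^{n-1})\bigr) \leq \mu\bigl(\mathfrak{M}_{2\zeta_0}(\xi_i \cap \mathbb{S}^{n-1})\bigr) + \frac{1}{N_k}.
\]
Dividing by $|\mu_k| = |\mu| + 1/N_k$ as in (\ref{Eq:mcddw}) gives $\bar{\mu}_k(\mathfrak{M}_{\zeta_0}(\xi_i \cap \mathbb{S}^{n-1}))/|\mu| \leq \bigl(\mu(\mathfrak{M}_{2\zeta_0}(\xi_i \cap \mathbb{S}^{n-1})) + 1/N_k\bigr)/|\mu_k|$, and by Step 2 this is $< \bar{\Theta}_i$ provided $k$ is large enough depending on $\bar{\Theta}_i - \sup_{\xi_i}\mu(\mathfrak{M}_{2\zeta_0}(\xi_i\cap\mathbb{S}^{n-1}))/|\mu| > 0$ (which is positive uniformly in $\xi_i$). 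Choosing $N_0$ as the maximum of these thresholds over $i = 1, \dots, n-1$ finishes the argument.

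The main obstacle is Step 2: obtaining a single $\zeta_0$ that works simultaneously for every $i$-subspace of every dimension. Pointwise outer regularity of $\mu$ at a given $\xi_i$ is easy, but transferring it to a uniform thickening requires the upper semicontinuity of $g_i$ on $G(i,n)$ and a Hausdorff-continuity bookkeeping that relates the tube around $\eta_i \cap \mathbb{S}^{n-1}$ to a slightly larger tube around a nearby $\xi_i \cap \mathbb{S}^{n-1}$. Once this uniformization is in place, the remaining count of the $1/N_k^2$ error terms in the discrete measure is routine.
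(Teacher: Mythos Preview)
Your argument is correct and complete, but the route differs from the paper's. The paper proceeds by contradiction: it assumes that for every candidate tube radius $\zeta$ and every cutoff $N$ one can find some $k>N$ and some $i$-subspace $\xi$ violating the desired bound, then extracts (via compactness of orthonormal frames) a limiting subspace $\xi_0$ and uses the weak convergence $\bar{\mu}_k\rightharpoonup\mu$ together with the nesting of tubes to conclude $\mu(\xi_0\cap\mathbb{S}^{n-1})\geq\Theta_i|\mu|$, contradicting (\ref{Eq:ZKJZL}). You instead work directly: you first show that $\xi_i\mapsto\mu(\xi_i\cap\mathbb{S}^{n-1})$ is upper semicontinuous on the compact Grassmannian so that its supremum is strictly below $\Theta_i$, then promote this to a uniform tube estimate via a finite cover, and finally compare $\bar{\mu}_k$ with $\mu$ explicitly through the partition structure (the diameter bound $d(U_{j,k})<1/k$ pushes atoms of $\mu_k$ lying in $\mathfrak{M}_{\zeta_0}$ into $\mathfrak{M}_{2\zeta_0}$, and the $1/N_k^2$ perturbations contribute at most $1/N_k$). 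Your approach is more transparent and yields somewhat more explicit control on $\zeta_0$ and $N_0$; the paper's contradiction argument is shorter in principle but relies on the weak convergence $\bar{\mu}_k\rightharpoonup\mu$ as a black box rather than the concrete mechanism you isolate in Step~3. The only cosmetic point is that in Step~2 you actually need the inclusion $\mathfrak{M}_{2\zeta(\xi_i)}(\eta_i\cap\mathbb{S}^{n-1})\subset\mathfrak{M}_{3\zeta(\xi_i)}(\xi_i\cap\mathbb{S}^{n-1})$ (not merely $\mathfrak{M}_{\zeta(\xi_i)}\subset\mathfrak{M}_{2\zeta(\xi_i)}$) before passing to the minimum, but this follows from the same triangle-inequality reasoning and is exactly why you chose the factor $3$.
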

\begin{proof}
 Fix $i \in\{1, \ldots, n-1\}$. We first prove that there exist $N_i \in \mathbb{N}$ and $\zeta_i \in(0,1)$ such that for all $k>N_i$,
\begin{align}\label{Eq:Chgco}
\frac{\bar{\mu}_k\left(\mathfrak{M}_{\zeta_i}\left(\xi \cap \mathbb{S}^{n-1}\right)\right)}{|\mu|}<\zeta_i \Theta_i,
\end{align}
for all subspace $\xi$ of $\mathbb{R}^n$ with $\operatorname{dim} \xi=k$.
To prove (\ref{Eq:Chgco}), we argue by contradiction. Then there exists a sequence of $\xi_{i_j}$ with $\operatorname{dim} \xi_{i_j}=k$, a sequence of $\zeta_{i_j} \in(0,1)$ with $\lim _{j \rightarrow+\infty} \zeta_{i_j}=1$, and $k_{i_j}$ such that
$$
\frac{\bar{\mu}_{k_{i_j}}\left(\mathfrak{M}_{\zeta_{i_j}}\left(\xi_{i_j} \cap \mathbb{S}^{n-1}\right)\right)}{|\mu|}>\zeta_{i_j} \Theta_i.
$$
Let $e_{1, i_j}, \ldots, e_{i, i_j}$ be the orthonormal basis of subspace $\xi_{i_j}$, there exists a sequence of $e_1, \ldots, e_k \in \mathbb{S}^{n-1}$ and a subsequence $e_{1, i_j}, \ldots, e_{k, i_j}$ (which is also denoted by $e_{1, i_j}, \ldots, e_{k, i_j}$) such that $e_{1, i_j} \rightarrow e_1, \ldots, e_{k, i_j} \rightarrow e_k$ as $j \rightarrow+\infty$. Let $\xi_0=\operatorname{span}\left\{e_1, \ldots, e_k\right\}$. Thus, for any given small positive constant $\zeta$, we get
$$
\begin{aligned}
\mu_{k_j}\left(\mathfrak{M}_\zeta\left(\xi_0 \cap \mathbb{S}^{n-1}\right)\right) & \geq \mu_{k_j}\left(\mathfrak{M}_{\zeta_{i_j}}\left(\xi_{i_j} \cap \mathbb{S}^{n-1}\right)\right) \\
& >\zeta_{i_j} \Theta_i \mu(\mathbb{S}^{n-1}),
\end{aligned}
$$
for large enough $j$. By letting $j \rightarrow+\infty$, we deduce
$$
\mu\left(\mathfrak{M}_\zeta\left(\xi_0 \cap \mathbb{S}^{n-1}\right)\right) \geq \Theta_i \mu(\mathbb{S}^{n-1}).
$$
Letting $\zeta \mapsto 0$, we obtain
$$
\mu\left(\xi_0 \cap \mathbb{S}^{n-1}\right) \geq \Theta_i \mu(\mathbb{S}^{n-1}),
$$
which is a contradiction. Hence, (\ref{Eq:yhbds}) holds. Set $N_0=\max \left\{N_1, \ldots, N_{n-1}\right\}$ and $\zeta_0=\min \left\{\zeta_1, \ldots, \zeta_{n-1}\right\}$, we deduce
$$
\frac{\bar{\mu}_k\left(\mathfrak{M}_{\zeta_0}\left(\xi \cap \mathbb{S}^{n-1}\right)\right)}{|\mu|}<\zeta_0 \Theta_i=: \bar{\Theta}_i,
$$
whenever $k>N_0$ and $k=1, \ldots, n-1$.
\end{proof}

\begin{lemma}[Lemma 4.1 of \cite{BLYZZ2013}]\label{lem59cg}
Suppose $\alpha_1, \ldots, \alpha_k \in[0,1]$ are such that
$$
\alpha_1+\cdots+\alpha_k=1.
$$
Suppose further that $\beta_1 \leq \cdots \leq \beta_k$ are real numbers. Assume there exist $\gamma_0, \ldots, \gamma_k \in[0, \infty)$, with $\gamma_0=0$, and $\gamma_k=1$, such that
$$
\alpha_1+\cdots+\alpha_l \leq \gamma_l, \quad \text { for } l=1, \ldots, k,
$$
then
$$
\sum_{l=1}^k \alpha_l \beta_l \geq \sum_{k=1}^k\left(\gamma_l-\gamma_{l-1}\right) \beta_l.
$$
\end{lemma}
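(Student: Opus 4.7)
The plan is to prove the inequality by Abel summation (summation by parts), reducing the comparison of the two weighted sums to a termwise comparison of partial sums. Let me write $A_l = \alpha_1 + \cdots + \alpha_l$ with the convention $A_0 = 0$, so that by hypothesis $A_k = 1 = \gamma_k$, $A_0 = 0 = \gamma_0$, and crucially $A_l \leq \gamma_l$ for all $l = 1, \ldots, k$.

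First I would rewrite each side in the Abel-summation form. Since $\alpha_l = A_l - A_{l-1}$ and $\gamma_l - \gamma_{l-1}$ is already a difference of partial sums, after shifting indices in the usual way one obtains
\begin{equation*}
\sum_{l=1}^k \alpha_l \beta_l \;=\; A_k \beta_k - \sum_{l=1}^{k-1} A_l (\beta_{l+1} - \beta_l),
\end{equation*}
and analogously
\begin{equation*}
\sum_{l=1}^k (\gamma_l - \gamma_{l-1}) \beta_l \;=\; \gamma_k \beta_k - \sum_{l=1}^{k-1} \gamma_l (\beta_{l+1} - \beta_l).
\end{equation*}
Using $A_k = \gamma_k = 1$, the boundary terms cancel when one subtracts, and one is left with
\begin{equation*}
\sum_{l=1}^k \alpha_l \beta_l - \sum_{l=1}^k (\gamma_l - \gamma_{l-1}) \beta_l \;=\; \sum_{l=1}^{k-1} (\gamma_l - A_l)(\beta_{l+1} - \beta_l).
\end{equation*}

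The second step is the sign analysis of the resulting sum. By the hypothesis $A_l \leq \gamma_l$, each factor $\gamma_l - A_l$ is nonnegative; by the monotonicity $\beta_1 \leq \cdots \leq \beta_k$, each factor $\beta_{l+1} - \beta_l$ is also nonnegative. Hence every summand on the right-hand side is nonnegative, and the claimed inequality $\sum_l \alpha_l \beta_l \geq \sum_l (\gamma_l - \gamma_{l-1}) \beta_l$ follows immediately.

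I do not expect a serious obstacle here; the entire argument is the standard Abel identity combined with a sign check, and the only care needed is bookkeeping the index shift so that the boundary contributions match (which is exactly where the normalizations $A_0 = \gamma_0 = 0$ and $A_k = \gamma_k = 1$ are used). If desired one can phrase the same computation as a discrete integration-by-parts for the Stieltjes-type sums $\int \beta \, dA$ and $\int \beta \, d\gamma$, in which case the conclusion is the discrete analog of the monotone-rearrangement principle: a larger cumulative distribution function paired with an increasing integrand yields a smaller integral.
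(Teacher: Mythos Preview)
Your proof is correct. The paper does not give its own proof of this lemma; it simply cites it from \cite{BLYZZ2013}. Your Abel-summation argument is the standard one, and in fact the paper carries out exactly this summation-by-parts computation in the proof of Lemma~\ref{lem:tqgj} when it applies the result (writing the weighted sum as $\beta_n + \sum_{i=1}^{n-1}\beta_i(\log a_{i,k}-\log a_{i+1,k})$ and then using $\beta_i<(1-t_0)\Theta_i$ together with $a_{i,k}\le a_{i+1,k}$), so your approach matches both the cited source's method and the paper's own use of the idea.
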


\begin{lemma}\label{lem:tqgj} 
Let $2\leq p<\infty$, $\mu$ is a finite Borel measure on $\mathbb{S}^{n-1}$ satisfies the subspace mass inequality (\ref{Eq:ZKJZL}). Suppose further that a sequence $\left(e_{1, k}, \ldots, e_{n, k}\right)$, for $k=1,2, \ldots$, of ordered orthonormal bases of $\mathbb{R}^n$, converges to the ordered orthonormal basis $\left(e_1, \ldots, e_n\right)$. If $E_k$ is the sequence of ellipsoids
$$
E_k=\left\{x \in \mathbb{R}^n: \frac{\left(x \cdot e_{1, k}\right)^2}{a_{1, k}^2}+\cdots+\frac{\left(x \cdot e_{n, k}\right)^2}{a_{n, k}^2} \leq 1\right\},
$$
with $0<a_{1, k} \leq a_{2, k} \leq \cdots \leq a_{n, k}$, for all $k$, then there exist $k_0$, and $\delta_0, t_0>0$ such that for each $k>k_0$,
\begin{align*}
\frac{1}{\left|\bar{\mu}_k\right|} \Psi_{\bar{\mu}_k,E_k}\left(o\right) \geq&\log \frac{\delta_0}{2}+t_0\log a_{n, k}+\frac{(1-t_0)n(p-2)}{n(n+2)(p-1)}\log a_{1,k}\\
&+ \frac{(1-t_0)[n(p-1)+p]}{n(n+2)(p-1)}\log |E_k|+\frac{(1-t_0)[n(p-1)+p]}{n(n+2)(p-1)}\log \omega_n.
\end{align*}
\end{lemma}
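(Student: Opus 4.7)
The plan is to lower bound $\Psi_{\bar\mu_k,E_k}(o)=\int_{\mathbb{S}^{n-1}}\log h_{E_k}(u)\,d\bar\mu_k(u)$ by partitioning $\mathbb{S}^{n-1}$ according to the principal directions of $E_k$, invoking the uniform subspace mass inequality (\ref{Eq:yhbds}) to control the $\bar\mu_k$-mass of each piece, and then applying the rearrangement inequality in Lemma~\ref{lem59cg} with $\beta_i=\log a_{i,k}$.

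First I would set $\xi_{i,k}:=\mathrm{span}\{e_{1,k},\ldots,e_{i,k}\}$ and, for a small $\delta_0>0$ to be chosen independently of $k$, define the nested coordinate slabs
$$
W_{i,k}:=\{u\in\mathbb{S}^{n-1}:|u\cdot e_{j,k}|\le\delta_0/2\text{ for all }j>i\},\quad i=0,\ldots,n,
$$
with $W_{0,k}=\emptyset$ and $W_{n,k}=\mathbb{S}^{n-1}$, together with the partition pieces $V_{i,k}:=W_{i,k}\setminus W_{i-1,k}$. An elementary estimate shows $W_{i,k}\subset\mathfrak{M}_{c_n\delta_0}(\xi_{i,k}\cap\mathbb{S}^{n-1})$ for some dimensional constant $c_n$, so once $\delta_0$ is small enough that $c_n\delta_0\le\zeta_0$ and $k\ge k_0\ge N_0$, inequality (\ref{Eq:yhbds}) yields $\bar\mu_k(W_{i,k})/|\bar\mu_k|<\bar\Theta_i$ for $i=1,\ldots,n-1$. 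By construction, on $V_{i,k}$ we have $|u\cdot e_{i,k}|>\delta_0/2$, so
$$
h_{E_k}(u)=\Big(\sum_{j=1}^n a_{j,k}^{2}(u\cdot e_{j,k})^{2}\Big)^{1/2}\ge a_{i,k}\cdot\delta_0/2.
$$

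Setting $\alpha_i:=\bar\mu_k(V_{i,k})/|\bar\mu_k|$, the $\alpha_i$ satisfy $\sum_i\alpha_i=1$ and $\alpha_1+\cdots+\alpha_i<\bar\Theta_i$ for $i<n$. Integrating the log of the pointwise bound and then applying Lemma~\ref{lem59cg} with $\gamma_i:=\bar\Theta_i$ for $i<n$, $\gamma_n:=1$, gives
$$
\frac{\Psi_{\bar\mu_k,E_k}(o)}{|\bar\mu_k|}\ge\log\frac{\delta_0}{2}+\sum_{i=1}^{n}(\bar\Theta_i-\bar\Theta_{i-1})\log a_{i,k}.
$$
The last step is an algebraic rewriting. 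Exploiting $\bar\Theta_i=\zeta_0\Theta_i$ for $i<n$ and the identity $\Theta_i-\Theta_{i-1}=\frac{n(p-1)+p}{n(n+2)(p-1)}$ for $2\le i\le n-1$, the middle coefficients coincide; adding and subtracting $\zeta_0\frac{n(p-1)+p}{n(n+2)(p-1)}(\log a_{1,k}+\log a_{n,k})$ and substituting $\sum_i\log a_{i,k}=\log|E_k|-\log\omega_n$ converts the right-hand side into the stated combination of $\log a_{n,k}$, $\log a_{1,k}$, $\log|E_k|$ and $\log\omega_n$ with $t_0:=1-\zeta_0\in(0,1)$.

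The main obstacle I expect is the geometric step: selecting a single $\delta_0$, independent of $k$, small enough that every coordinate slab $W_{i,k}$ sits inside the $\zeta_0$-neighborhood to which (\ref{Eq:yhbds}) applies, uniformly in the sequence of orthonormal bases $(e_{j,k})$. Once this uniform control is secured, the pointwise estimate on $h_{E_k}$, the invocation of Lemma~\ref{lem59cg}, and the final rearrangement are essentially bookkeeping.
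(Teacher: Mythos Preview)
Your approach is correct and is essentially the paper's own proof: the same partition of $\mathbb{S}^{n-1}$ by principal directions, the same invocation of the uniform subspace mass bound (\ref{Eq:yhbds}), and the same rearrangement step via Lemma~\ref{lem59cg} (which the paper carries out inline as a summation by parts with $\gamma_i=(1-t_0)\Theta_i$). The only cosmetic difference is that you build the partition with respect to the moving basis $(e_{1,k},\ldots,e_{n,k})$, whereas the paper builds it with respect to the limit basis $(e_1,\ldots,e_n)$ and then uses the convergence $|e_{i,k}-e_i|<\delta_0/2$ in a triangle-inequality step to get $h_{E_k}(u)\ge\frac{\delta_0}{2}a_{i,k}$; your version makes this convergence hypothesis superfluous, and the ``main obstacle'' you flag is in fact trivial since the slab inclusion $W_{i,k}\subset\mathfrak{M}_{c_n\delta_0}(\xi_{i,k}\cap\mathbb{S}^{n-1})$ holds with a dimensional constant independent of the basis.
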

\begin{proof}
Let $e_1, \ldots, e_n$ be an orthonormal basis in $\mathbb{R}^n$. We define the following partition of the unit sphere. For each $\delta \in\left(0, \frac{1}{\sqrt{n}}\right)$, define
\begin{align}\label{Eq:AD2EC}
A_{l, \delta}=\left\{u \in \mathbb{S}^{n-1}:\left|u \cdot e_l\right| \geq \delta,\left|u \cdot e_j\right|<\delta, \text { for } j>l\right\},
\end{align}
for each $l=1, \ldots, n$. These sets are non-empty since $e_l \in A_{l, \delta}$. They are obviously disjoint. Furthermore, it can be seen that the union of $A_{l, \delta}$ covers $\mathbb{S}^{n-1}$. Indeed, for any unit vector $u \in \mathbb{S}^{n-1}$, by the choice of $\delta$, there has to be at least one $i$ such that $\left|u \cdot e_l\right| \geq \delta$. Let $i_0$ be the largest $l$ that makes $\left|u \cdot e_l\right| \geq \delta$, then $u\in A_{i_0, \delta}$.

Since $\mu$ satisfies the subspace mass inequality (\ref{Eq:ZKJZL}), by (\ref{Eq:yhbds}), there exist $N_0>0$, $ \zeta_0 \in(0,1)$, and $\bar{\Theta}_i \in\left(0, \Theta_i\right)$ such that for all $k>N_0$, (\ref{Eq:yhbds}) holds for each $i$-dimensional proper subspace $\xi_n \subset \mathbb{R}^n$. Let $t_0>0$ be sufficiently small so that
$$
\left(1-t_0\right) \Theta_i>\bar{\Theta}_i.
$$
Therefore, for all $k>N_0$, we obtain
\begin{align}\label{Eq:Thetas}
\frac{\bar{\mu}_k(\mathfrak{M}_{\zeta_0}(\xi_i \cap \mathbb{S}^{n-1}))}{|\mu|}<\left(1-t_0\right) \Theta_i,
\end{align}
for each $i$-dimensional subspace $\xi_i \subset \mathbb{R}^n$ and $i=1, \ldots, n-1$. Specifically, we set $\xi_i=\operatorname{span}\left\{e_1, \ldots, e_i\right\}$.

Notice that for a sufficiently small $\delta_0 \in(0,1)$, we obtain
$$
\bigcup_{l=1}^i A_{l, \delta_0} \subset \mathfrak{M}_{\zeta_0}\left(\xi_i \cap \mathbb{S}^{n-1}\right),
$$
and as a result of (\ref{Eq:Thetas}) and the fact that $A_{l, \delta_0}$ constitutes a partition of $\mathbb{S}^{n-1}$. 

Obviously, if 
$$\alpha_{l, \delta_0}=\frac{\bar{\mu}_k\left(A_{l, \delta_0}\right)}{\left|\bar{\mu}_k\right|},$$
 for each $l=1, \ldots, n$, Then
$$\alpha_{1, \delta_0}+\alpha_{2, \delta_0}+\cdots+\alpha_{n, \delta_0}=1,$$
thus, by the fact that $\left|\bar{\mu}_k\right|=|\mu|$, and $i=1, \ldots, n-1$, we obtain
\begin{align}\label{Eq:C6Gh}
\sum_{l=1}^i\alpha_{l, \delta_0} =\frac{\sum_{l=1}^i \bar{\mu}_k\left(A_{l, \delta_0}\right)}{|\mu|}<\left(1-t_0\right) \Theta_i,
\end{align}

Since $e_{1, k}, \ldots, e_{n, k}$ converges to $e_1, \ldots, e_n$, there exists $N_1>N_0$ such that for each $k>N_1$,
$$
\left|e_{i, k}-e_i\right|<\frac{\delta_0}{2},
$$
 for  $k=1, \ldots, n$. Note that since $\pm a_{i, k} e_{i, k} \in E_k$, we have for each $u \in A_{i, \delta_0}$,
\begin{align}\label{Eq:HEKU}
h(E_k,u) \geq\left|u \cdot e_{i, k}\right| a_{i, k} \geq\left(\left|u \cdot e_i\right|-\left|u \cdot\left(e_{i, k}-e_i\right)\right|\right) a_{i, k} \geq \frac{\delta_0}{2} a_{i, k}.
\end{align}
Therefore, by the fact that $A_{i, \delta_0}$ forms a partition of $\mathbb{S}^{n-1}$ and (\ref{Eq:HEKU}), we obtain
\begin{align*}
\frac{1}{\left|\bar{\mu}_k\right|} \Psi_{\bar{\mu}_k,E_k}\left( o\right) & =\frac{1}{\left|\bar{\mu}_k\right|} \int_{\mathbb{S}^{n-1}} \log h(E_k,u) d \bar{\mu}_k(u) \\
&=\frac{1}{\left|\bar{\mu}_k\right|} \sum_{i=1}^n \int_{A_{i, \delta_0}} \log h(E_k,u) d \bar{\mu}_k(u) \\
& \geq \sum_{i=1}^n \frac{\bar{\mu}_k\left(A_{i, \delta_0}\right)}{\left|\bar{\mu}_k\right|} \log \left(\frac{\delta_0}{2}a_{i, k} \right)\\
& =\log \frac{\delta_0}{2}+\sum_{i=1}^n \alpha_{i, \delta_0} \log a_{i, k}.
\end{align*}
By Lemma \ref{lem59cg}, we further set $\beta_i=\alpha_{1, \delta_0}+\cdots+\alpha_{i, \delta_0}$ for $i=1, \ldots, n$ and $\beta_0=0$. Note that $\beta_n=1$. We have $\alpha_{i, \delta_0}=\beta_i-\beta_{i-1}$ for $i=1, \ldots, n$. Thus,
$$
\begin{aligned}
\sum_{i=1}^n \alpha_{i, \delta_0} \log a_{i, k} & =\sum_{i=1}^n (\beta_i-\beta_{i-1}) \log a_{i, k} \\
& =\log a_{n, k}+\sum_{i=1}^{n-1} \beta_i\left(\log a_{i, k}-\log a_{i+1, k}\right),
\end{aligned}
$$
where in the last equality, we performed summation by parts. Note that by definition of $\beta_i$, equation (\ref{Eq:C6Gh}) simply states $\beta_i<(1-t_0)\Theta_i$, together with the fact that $a_{i, k} \leq a_{i+1, k}$, implies
\begin{align*}
&\nonumber\sum_{i=1}^n \alpha_{i, \delta_0} \log a_{i, k}\\
\geq & \log a_{n, k}+\sum_{i=1}^{n-1} (1-t_0)\Theta_i\left(\log a_{i, k}-\log a_{i+1, k}\right)\\
=& t_0\log a_{n, k}+(1-t_0)\left[\sum_{i=1}^{n-1}\Theta_i\left(\log a_{i, k}-\log a_{i+1, k}\right)+\log a_{n, k}\right]\\
=& t_0\log a_{n, k}+(1-t_0)\left[\Theta_1\log a_{1,k}+\sum_{i=2}^{n-1}(\Theta_i-\Theta_{i-1})\log a_{i, k}+(1-\Theta_{n-1})\log a_{n, k}\right]\\
=& t_0\log a_{n, k}+(1-t_0)\left[\left(\frac{n(p-2)}{n(n+2)(p-1)}+\frac{n(p-1)+p}{n(n+2)(p-1)}\right)\log a_{1,k}\right.\\
&\left.+\sum_{i=2}^{n-1}\frac{n(p-1)+p}{n(n+2)(p-1)}\log a_{i, k}+\frac{n(p-1)+p}{n(n+2)(p-1)}\log a_{n, k}\right]\\
=& t_0\log a_{n, k}+(1-t_0)\left[\frac{n(p-2)}{n(n+2)(p-1)}\log a_{1,k}+ \frac{n(p-1)+p}{n(n+2)(p-1)}\sum_{i=1}^{n}\log a_{i, k}\right]\\
=&t_0\log a_{n, k}+\frac{(1-t_0)n(p-2)}{n(n+2)(p-1)}\log a_{1,k}+ \frac{(1-t_0)[n(p-1)+p]}{n(n+2)(p-1)}\sum_{i=1}^{n}\log a_{i, k}\\
=&t_0\log a_{n, k}+\frac{(1-t_0)n(p-2)}{n(n+2)(p-1)}\log a_{1,k}+ \frac{(1-t_0)[n(p-1)+p]}{n(n+2)(p-1)}\log \left(a_{1, k}a_{1, k}\cdots a_{n-1, k}a_{n, k}\right).
\end{align*}
Since $\left|E_k\right|=\omega_n a_{1, k} a_{2, k} \ldots a_{n, k}$, Thus
\begin{align*}
\frac{1}{\left|\bar{\mu}_k\right|} \Psi_{\bar{\mu}_k,E_k}\left( o\right) \geq&\log \frac{\delta_0}{2}+t_0\log a_{n, k}+\frac{(1-t_0)n(p-2)}{n(n+2)(p-1)}\log a_{1,k}\\
&+ \frac{(1-t_0)[n(p-1)+p]}{n(n+2)(p-1)}\log |E_k|+\frac{(1-t_0)[n(p-1)+p]}{n(n+2)(p-1)}\log \omega_n.
\end{align*}
This concludes its proof.
\end{proof}

Now, we will prove that $\bar{P}_k$ is uniformly bounded.
\begin{lemma}\label{Lem:YzyjgJ}
 Let $2\leq p<\infty$, Suppose $\mu$ is a non-zero finite Borel measure on $\mathbb{S}^{n-1}$ and $\bar{\mu}_k$ is defined as in (\ref{Eq:mcddw}). Let $\bar{P}_k$ be constructed as in (\ref{Eq:pkhepl}). If $\mu$ satisfies the subspace mass inequality (\ref{Eq:ZKJZL}), then $\bar{P}_k$ is uniformly bounded.
\end{lemma}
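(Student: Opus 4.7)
The plan is to argue by contradiction, exploiting the tension between the uniform upper bound of Lemma~\ref{lem59} and the lower bound of Lemma~\ref{lem:tqgj} for $\Psi_{\bar\mu_k,\bar P_k}(o)$. After normalization I may assume $|\mu|=1$, so $|\bar\mu_k|=1$. Suppose, toward a contradiction, that along a subsequence (still denoted $\bar P_k$) the outer radius of $\bar P_k$ diverges. For each such $k$, I choose an origin-centered ellipsoid
\[
E_k=\Bigl\{x\in\mathbb R^n:\sum_{i=1}^n\tfrac{(x\cdot e_{i,k})^2}{a_{i,k}^2}\le 1\Bigr\},\qquad 0<a_{1,k}\le\dots\le a_{n,k},
\]
with $E_k\subset\bar P_k\subset C_nE_k$ for a dimensional constant $C_n$ (one may take, e.g., a suitable rescaling of the origin-centered John ellipsoid of $\bar P_k\cap(-\bar P_k)$), so that $a_{n,k}$ is comparable to the outer radius of $\bar P_k$ and hence $a_{n,k}\to\infty$. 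Passing to a further subsequence, the frame $(e_{1,k},\dots,e_{n,k})$ converges to an orthonormal basis.

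Since $E_k\subset\bar P_k$ gives $h_{E_k}\le h_{\bar P_k}$ on $\mathbb S^{n-1}$ and $\bar\mu_k\ge 0$, Lemma~\ref{lem59} yields $\Psi_{\bar\mu_k,E_k}(o)\le\Psi_{\bar\mu_k,\bar P_k}(o)<c_0$. Meanwhile, Lemma~\ref{lem:tqgj} supplies
\[
\Psi_{\bar\mu_k,E_k}(o)\ge\log\tfrac{\delta_0}{2}+t_0\log a_{n,k}+\tfrac{(1-t_0)n(p-2)}{n(n+2)(p-1)}\log a_{1,k}+\tfrac{(1-t_0)[n(p-1)+p]}{n(n+2)(p-1)}\log|E_k|+C.
\]
The contradiction will follow once this lower bound is shown to diverge to $+\infty$ as $a_{n,k}\to\infty$.

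To control the last two terms, I use the torsional identity $\tau_{F,p}(\bar P_k)=|\mu|$ from Lemma~\ref{lem:YXNGD}: via Lemma~\ref{lemuppb} this forces $|\bar P_k|\ge c>0$, hence $|E_k|\ge C_n^{-n}c>0$, and $\log|E_k|$ is bounded below. From $a_{1,k}\cdots a_{n,k}=|E_k|/\omega_n$ and the ordering $a_{i,k}\le a_{n,k}$, one gets $a_{1,k}\ge c'a_{n,k}^{-(n-1)}$; substituting this into the $\log a_{1,k}$ term (whose coefficient is nonnegative since $p\ge 2$) contributes $-\tfrac{(1-t_0)(p-2)(n-1)}{(n+2)(p-1)}\log a_{n,k}$ plus a constant. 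Collecting, the net coefficient of $\log a_{n,k}$ becomes
\[
t_0-\frac{(1-t_0)(p-2)(n-1)}{(n+2)(p-1)}.
\]

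The crux is that the strengthened subspace mass inequality \eqref{Eq:ZKJZL} is calibrated exactly so that the $\bar\Theta_i\in(0,\Theta_i)$ afforded by \eqref{Eq:yhbds} can be taken with $\bar\Theta_i/\Theta_i$ sufficiently small that the admissible range $t_0\in(0,1-\bar\Theta_i/\Theta_i)$ in Lemma~\ref{lem:tqgj} contains a value satisfying $t_0(n+2)(p-1)>(1-t_0)(p-2)(n-1)$. With such a $t_0$, the displayed coefficient is strictly positive, and $a_{n,k}\to\infty$ forces $\Psi_{\bar\mu_k,E_k}(o)\to+\infty$, contradicting the bound $<c_0$. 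The main obstacle is precisely this compatibility check: one must match the exponent $(n(p-1)+p)/(n(n+2)(p-1))$ appearing in \eqref{Eq:ZKJZL} to the $p$-torsional scaling arising from Lemma~\ref{lemuppb}, verifying that the required interval of $t_0$ is nonempty whenever $\mu$ strictly satisfies \eqref{Eq:ZKJZL}; this is exactly why the refined inequality, rather than the classical \eqref{Eq:ZKJZRL}, is needed for $p>2$.
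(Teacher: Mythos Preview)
Your overall strategy---contradiction via an ellipsoid comparison, playing Lemma~\ref{lem59} against Lemma~\ref{lem:tqgj}---matches the paper's. The gap is in how you manufacture the ellipsoid.

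You take an origin-centered $E_k$ built from the John ellipsoid of $\bar P_k\cap(-\bar P_k)$ and assert $E_k\subset\bar P_k\subset C_nE_k$. The first inclusion is fine, but the outer one is not: the John ellipsoid of $\bar P_k\cap(-\bar P_k)$ only controls the symmetrized body, and nothing in the setup (not even $\eta(\bar P_k)=o$) forces $o$ to sit centrally in $\bar P_k$. If $\bar P_k$ is lopsided about $o$ then $\bar P_k\cap(-\bar P_k)$ can be tiny compared to $\bar P_k$, so neither $\bar P_k\subset C_nE_k$, nor the comparability of $a_{n,k}$ with the outer radius of $\bar P_k$, nor the lower bound $|E_k|\ge C_n^{-n}|\bar P_k|$ follows. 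The paper instead uses the ordinary (non-centered) John ellipsoid of $\bar P_k$, centered at some $o_k\in\operatorname{Int}(\bar P_k)$, for which $E_k\subset\bar P_k\subset n(E_k-o_k)+o_k$ is automatic. The step you are missing is then the chain
\[
\Psi_{\bar\mu_k,\bar P_k}(o)\;\ge\;\Psi_{\bar\mu_k,\bar P_k}(o_k)\;\ge\;\Psi_{\bar\mu_k,E_k}(o_k)\;=\;\Psi_{\bar\mu_k,E_k-o_k}(o),
\]
the first inequality holding because $o=\eta(\bar P_k)$ is the \emph{maximizer} of $\eta\mapsto\Psi_{\bar\mu_k,\bar P_k}(\eta)$. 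Lemma~\ref{lem:tqgj} now applies to the centered ellipsoid $E_k-o_k$, and the outer John inclusion, together with monotonicity, homogeneity and translation invariance of $\tau_{F,p}$, gives
\[
\tau_{F,p}(E_k)=\tau_{F,p}(E_k-o_k)=n^{-\frac{n(p-1)+p}{p-1}}\tau_{F,p}\bigl(n(E_k-o_k)+o_k\bigr)\ge n^{-\frac{n(p-1)+p}{p-1}}\tau_{F,p}(\bar P_k)=n^{-\frac{n(p-1)+p}{p-1}}|\mu|,
\]
which bounds $|E_k|$ from below via Lemma~\ref{lemuppb}.

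Your final paragraph is also not right: the admissible range for $t_0$ in Lemma~\ref{lem:tqgj} is $(0,\,1-\max_i\bar\Theta_i/\Theta_i)$, and the $\bar\Theta_i$ are determined by $\mu$ through the preceding lemma---they cannot be ``taken sufficiently small'' at will, so the compatibility you claim is not a consequence of \eqref{Eq:ZKJZL} alone. The paper does not argue through such a calibration; it simply invokes $t_0\log a_{n,k}\to\infty$ together with the lower bound on $\tau_{F,p}(E_k)$ in \eqref{Eq:tygjshE}.
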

\begin{proof}
We use the method of contradiction. Assume that $\bar{P}_k$ is not uniformly bounded. Let $E_k$ denote the John ellipsoid of $\bar{P}_k$, and then
\begin{align}\label{Eq:YHTQ}
E_k \subset \bar{P}_k \subset n\left(E_k-o_k\right)+o_k,
\end{align}
where the ellipsoid $E_k$ centered at $o_k \in \operatorname{Int}\left(\bar{P}_k\right)$ is characterized by
$$
E_k=\left\{x \in \mathbb{R}^n: \frac{\left|\left(x-o_k\right) \cdot e_{1,k}\right|^2}{a_{1,k}^2}+\cdots+\frac{\left|\left(x-o_k\right) \cdot e_{n,k}\right|^2}{a_{n,k}^2} \leq 1\right\},
$$
for a sequence of ordered orthonormal bases $\left(e_{1,k}, \ldots, e_{n,k}\right)$ of $\mathbb{R}^n$, with $0<a_{1, l} \leq \cdots \leq a_{n, l}$. Since $\bar{P}_k$ is not uniformly bounded, by taking a subsequence, we can assume that $a_{n,k} \rightarrow \infty$ and $a_{n,k} \geq 1$. Due to the compactness of $\mathbb{S}^{n-1}$, we can select a subsequence and assume that $\left\{e_{1,k}, \ldots, e_{n,k}\right\}$ converges to $\left\{e_1, \ldots, e_n\right\}$ which forms an orthonormal basis in $\mathbb{R}^n$. From (\ref{Eq:sjgj}), Lemma \ref{lem:tqgj}, (\ref{Eq:YHTQ}), there exists $\delta_0, t_0, c_n$ and $N_0>0$ such that
\begin{align}\label{Eq:tygjshE}
\nonumber&\frac{1}{\left|\bar{\mu}_k\right|} \Psi_{ \bar{\mu}_k,\bar{P}_k}\left(o_k\right)\\
\nonumber\geq&\frac{1}{\left|\bar{\mu}_k\right|} \Psi_{\bar{\mu}_k,E_k}\left(o_k\right)=\frac{1}{\left|\bar{\mu}_k\right|} \Psi_{\bar{\mu}_k,E_k-o_k}(o) \\
\nonumber\geq& \log \frac{\delta_0}{2}+t_0\log a_{n, k}+\frac{(1-t_0)n(p-2)}{n(n+2)(p-1)}\log a_{1,k}\\
\nonumber &+ \frac{(1-t_0)[n(p-1)+p]}{n(n+2)(p-1)}\log |E_k|+\frac{(1-t_0)[n(p-1)+p]}{n(n+2)(p-1)}\log \omega_n\\
\nonumber\geq& \log \frac{\delta_0}{2}+t_0\log a_{n, k}+\frac{(1-t_0)n(p-2)}{n(n+2)(p-1)}\log a_{1,k}+ \frac{1-t_0}{n+2}\log \tau_{F,p}(E_k)\\
  &+\frac{(1-t_0)[n(p-1)+p]}{n(n+2)(p-1)}\log \omega_n-\frac{1-t_0}{n+2}\log\left(\frac{p-1}{n(p-1)+p}n^{-\frac{1}{p-1}}\kappa_n^{-\frac{p}{n(p-1)}}\right).
\end{align}
By using the homogeneity, monotonicity, and translation invariance of $\tau_{F,p}$ and (\ref{Eq:YHTQ}), we get
\begin{align}\label{Eq:po099}
\nonumber \tau_{F,p}(E_k) & =\tau_{F,p}(E_k-o_k)=n^{-\frac{n(p-1)+p}{(p-1)}} \tau_{F,p}(n(E_k-o_k )+o_k ) \\
& \geq n^{-\frac{n(p-1)+p}{(p-1)}}\tau_{F,p}(\bar{P}_k )=n^{-\frac{n(p-1)+p}{(p-1)}}|\bar{\mu}_k|.
\end{align}

Recall that $\eta_k(P_k)$ is the minimizer to (\ref{Eq:8ydcg}) with $\eta_k(P_k)=o$, by applying (\ref{Eq:pkhepl}), we know that $\eta_k(\bar{P}_k)=o$, this combines with $\left|\bar{\mu}_k\right|=|\mu|$, (\ref{Eq:po099}) and that $a_{n,k} \rightarrow \infty$, from (\ref{Eq:tygjshE}), we have
$$
\Psi_{\bar{\mu}_k,\bar{P}_k}(o) \geq \Psi_{\bar{\mu}_k,\bar{P}_k}\left(o_k\right) \rightarrow \infty, \text { as } j \rightarrow \infty,
$$
which contradicts to Lemma \ref{lem59}. Thus, the proof is completed.
\end{proof}

\begin{lemma}\label{Lem:cgteq}
If $\bar{P}_k$ in (\ref{Eq:pcg9}) are uniformly bounded and $\tau_{F,p}(\bar{P}_k)>c_0$ for some constant $c_0>0$, then there exists a convex body $K \in \mathscr{K}^n$ with $o \in K$ such that
\begin{align}\label{Eq:pcgg}
\tau_{F,p}^{\rm{log}}(K, \cdot)=\mu.
\end{align}
\end{lemma}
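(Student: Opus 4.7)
The plan is to extract a Hausdorff limit of the polytopes $\bar{P}_k$ via Blaschke selection, verify that the limit is a non-degenerate convex body containing the origin, and conclude by passing weak convergence through the cone anisotropic $p$-torsional measure together with $\bar{\mu}_k \rightharpoonup \mu$.

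First, by uniform boundedness of $\{\bar{P}_k\}$, the Blaschke selection theorem \cite[Theorem 1.8.7]{SRA2014} produces a subsequence (still denoted $\bar{P}_k$) converging in the Hausdorff metric to some compact convex set $K \subset \mathbb{R}^n$. The hypothesis $\tau_{F,p}(\bar{P}_k) > c_0$ combined with the upper bound (\ref{Eq:sjgj}) forces
$$c_0 < \tau_{F,p}(\bar{P}_k) \leq \frac{p-1}{n(p-1)+p}\, n^{-1/(p-1)}\kappa_n^{-p/(n(p-1))}\, |\bar{P}_k|^{(n(p-1)+p)/(n(p-1))},$$
so $|\bar{P}_k| \geq c_1 > 0$ uniformly. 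By continuity of Lebesgue measure under Hausdorff convergence of convex sets, $|K| \geq c_1 > 0$, so $K$ has non-empty interior, i.e., $K \in \mathscr{K}^n$. Since each $\bar{P}_k$ contains $o$ in its interior (as $\eta_k(\bar{P}_k) = o$), the Hausdorff convergence yields $o \in K$.

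Next I would prove the weak convergence $\tau_{F,p}^{\log}(\bar{P}_k, \cdot) \rightharpoonup \tau_{F,p}^{\log}(K, \cdot)$ on $\mathbb{S}^{n-1}$. From the Gauss map identity $\langle x, \mathbf{g}_L(x)\rangle = h_L(\mathbf{g}_L(x))$ applied to (\ref{Eq:AMBJCD}) and (\ref{Eq:logcd}), every $L \in \mathscr{K}_o^n$ satisfies
$$d\tau_{F,p}^{\log}(L, \cdot) = \frac{p-1}{n(p-1)+p}\, h_L \, dS_{F,p}(L, \cdot).$$
Hausdorff convergence $\bar{P}_k \to K$ gives $h_{\bar{P}_k} \to h_K$ uniformly on $\mathbb{S}^{n-1}$, so the desired weak convergence reduces to weak convergence $S_{F,p}(\bar{P}_k, \cdot) \rightharpoonup S_{F,p}(K, \cdot)$. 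Lemma \ref{lemapytrs} supplies this weak convergence when both the approximating sequence and the limit have boundaries of class $C^{2,\alpha}$; to extend it to the present setting (polytopes approaching a possibly non-smooth limit), I would use a $C^{2,\alpha}$-approximation of both $\bar{P}_k$ and $K$ (e.g., by mollifying support functions) and then invoke Lemma \ref{lemapytrs} via a diagonal extraction, using the uniform lower bound $|\bar{P}_k| \geq c_1$ and the variational characterization (\ref{Eq:patm}) to keep the Finsler $p$-harmonic gradients from degenerating.

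Once the weak continuity is established, (\ref{Eq:pcg9}) gives $\tau_{F,p}^{\log}(\bar{P}_k, \cdot) = \bar{\mu}_k$ and the construction (\ref{Eq:mcddw}) ensures $\bar{\mu}_k \rightharpoonup \mu$, so uniqueness of weak limits yields $\tau_{F,p}^{\log}(K, \cdot) = \mu$, as required. The hard part is exactly the weak continuity of $S_{F,p}$ outside the $C^{2,\alpha}$ regime of Lemma \ref{lemapytrs}: the surface measure is defined through the boundary trace of $F^p(\nabla u)$ on $\partial K$, which a priori only makes sense in that regularity class by Lemma \ref{lemia}. A careful smooth-boundary approximation argument, combined with uniform equicontinuity estimates on the gradients that are stable under Hausdorff perturbation, will be the technical heart of the proof.
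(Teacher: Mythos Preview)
Your approach is essentially the paper's: Blaschke selection, non-degeneracy of the limit via the upper bound (\ref{Eq:sjgj}), then passage to the limit in (\ref{Eq:pcg9}) through the weak continuity of the anisotropic $p$-torsional measure. The only substantive difference is that the paper first invokes the continuity of $\tau_{F,p}$ (Lemma \ref{lemlx1t}) to get $\tau_{F,p}(K)>0$ and then applies (\ref{Eq:sjgj}) to $K$ itself, whereas you apply (\ref{Eq:sjgj}) to $\bar{P}_k$ and pass the volume to the limit---both routes are valid. You are in fact more careful than the paper about the $C^{2,\alpha}$ hypothesis in Lemma \ref{lemapytrs}: the paper simply cites that lemma to justify the weak convergence without commenting on the fact that the $\bar{P}_k$ are polytopes, so the approximation step you flag as the ``hard part'' is a genuine technical point that the paper's own proof leaves implicit.
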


\begin{proof}
 By the Blaschke selection theorem \cite[Theorem 1.8.7]{SRA2014}, there exists a subsequence $\bar{P}_{k_j}$ of $\bar{P}_k$ such that $\bar{P}_{k_j} \rightarrow K$ as $j \to \infty$, where $K$ is a compact convex set containing the origin. Due to the continuity of $\tau_{F,p}$ and the fact that $\tau_{F,p}(\bar{P}_k)>c_1$, we obtain $\tau_{F,p}(K)>0$. From (\ref{Eq:sjgj}), we have 
\begin{align*}
\tau_{F,p}(K) \leq\frac{p-1}{n(p-1)+p}n^{-\frac{1}{p-1}}\kappa_n^{-\frac{p}{n(p-1)}}|K|^{\frac{n(p-1)+p}{n(p-1)}},
\end{align*}
Consequently, we can derive
\begin{align*} 
|K|\geq\left(\frac{n(p-1)+p}{p-1}n^{\frac{1}{p-1}}\kappa_n^{\frac{p}{n(p-1)}}\right)^{\frac{n(p-1)}{n(p-1)+p}}\tau_{F,p}(K)>0.
\end{align*}
Thus, $K$ is non-degenerate, which in turn implies $K$ has nonempty interior. Equation (\ref{Eq:pcgg}) now easily follows by taking the limit of both sides of (\ref{Eq:pcg9}) and applying Lemma \ref{lemapytrs}.
\end{proof}

\begin{proof}[Proof of Theorem \ref{thmyblmp}] Based on Lemma \ref{lem:YXNGD}, Lemma \ref{Lem:YzyjgJ}, and Lemma \ref{Lem:cgteq}, we can conclude that Theorem \ref{thmyblmp} holds.
\end{proof}

\end{sloppypar}

\end{document}